\numberwithin{equation}{section}
\newtheorem{theorem}{Theorem}[section]
\newtheorem{lemma}[theorem]{Lemma}
\newtheorem{corollary}[theorem]{Corollary}
\newtheorem{remark}[theorem]{\bf{Remark}}
\newtheorem{assumption}[theorem]{Assumption}
\newtheorem{definition}[theorem]{Definition}
\theoremstyle{remark}
\theoremstyle{definition}
\newcommand\bL{\mathbb{L}}
\newcommand\bR{\mathbb{R}}
\newcommand\bH{\mathbb{H}}
\newcommand\bZ{\mathbb{Z}}
\newcommand\bE{\mathbb{E}}
\newcommand\bN{\mathbb{N}}
\newcommand\cB{\mathcal{B}}
\newcommand\cC{\mathcal{C}}
\newcommand\cF{\mathcal{F}}
\newcommand\cH{\mathcal{H}}
\newcommand\cI{\mathcal{I}}
\newcommand\cL{\mathcal{L}}
\newcommand\cP{\mathcal{P}}
\newcommand\cS{\mathcal{S}}
\newcommand\cM{\mathcal{M}}
\newcommand\cbrk{\text{$]$\kern-.15em$]$}}
\newcommand\opar{\text{\,\raise.2ex\hbox{${\scriptstyle
|}$}\kern-.34em$($}}
\newcommand\cpar{\text{$)$\kern-.34em\raise.2ex\hbox{${\scriptstyle |}$}}\,}
\newcommand\ep{\varepsilon}
\begin{document}

\title[A 
regularity theory for SGBE driven by a space-time white noise]{A 
regularity theory for stochastic generalized Burgers' equation driven by a multiplicative space-time white noise
}

\author{Beom-Seok Han 
}

\address{Department of Mathematics, Pohang University of Science and Technology, 77, Cheongam-ro, Nam-gu, Pohang, Gyeongbuk, 37673, Republic of Korea}

\email{hanbeom@postech.ac.kr}

\thanks{This work was supported by the National Research Foundation of Korea (NRF) grant
funded by the Korea government (MSIT) (No. NRF-2021R1C1C2007792) 
and the BK21 FOUR (Fostering Outstanding Universities for Research) funded by the Ministry of Education (MOE, Korea) and National Research Foundation of Korea (NRF)}

\subjclass[2020]{60H15, 35R60}

\keywords{Stochastic partial differential equation,
Nonlinear,
Super-linear,
Stochastic generalized Burger's equation,
Space-time white noise,
H\"older regularity}

\begin{abstract}

We introduce the uniqueness, existence, $L_p$-regularity, and maximal H\"older regularity of the solution to semilinear stochastic partial differential equation driven by a multiplicative space-time white noise:
$$ u_t = au_{xx} + bu_{x} + cu + \bar b|u|^\lambda u_{x} + \sigma(u)\dot W,\quad (t,x)\in(0,\infty)\times\bR; \quad u(0,\cdot) = u_0,
$$
where $\lambda > 0$. The function $\sigma(u)$ is either bounded Lipschitz or super-linear in $u$. The noise $\dot W$ is a space-time white noise. The coefficients $a,b,c$ depend on $(\omega,t,x)$, and $\bar b$ depends on $(\omega,t)$. The coefficients $a,b,c,\bar{b}$ are uniformly bounded, and $a$ satisfies ellipticity condition. The random initial data $u_0 = u_0(\omega,x)$ is nonnegative.

To establish the $L_p$-regularity theory, we impose an algebraic condition on $\lambda$ depending on the nonlinearity of the diffusion coefficient $\sigma(u)$. For example, if $\sigma(u)$ has Lipschitz continuity, linear growth, and \textit{boundedness} in $u$, $\lambda$ is assumed to be less than or \textit{equal} to $1$; $\lambda\in (0,1]$. However, if $\sigma(u) = |u|^{1+\lambda_0}$ with $\lambda_0\in[0,1/2)$, $\lambda$ is taken to be less than $1$; $\lambda\in(0,1)$. Under those conditions, the uniqueness, existence, and regularity of the solution are obtained in stochastic $L_p$ spaces. Also, we have the maximal H\"older regularity by employing the H\"older embedding theorem. For example, if $\lambda \in(0,1]$ and $\sigma(u)$ has Lipschitz continuity, linear growth, and boundedness in $u$, for $T<\infty$ and $\ep>0$,  
\begin{equation*}
u \in C^{1/4 - \ep,1/2 - \ep}_{t,x}([0,T]\times\bR)\quad(a.s.).
\end{equation*}
On the other hand, if $\lambda\in(0,1)$ and $\sigma(u) = |u|^{1+\lambda_0}$ with $\lambda_0\in[0,1/2)$, for $T<\infty$ and $\ep>0$,
\begin{equation*}
u \in C^{\frac{1/2-(\lambda -1/2) \vee \lambda_0}{2} - \ep,1/2-(\lambda -1/2) \vee \lambda_0 - \ep}_{t,x}([0,T]\times\bR)\quad (a.s.).
\end{equation*}
It should be noted that if $\sigma(u)$ is bounded Lipschitz in $u$, the H\"older regularity of the solution is independent of $\lambda$. However, if $\sigma(u)$ is super-linear in $u$, the H\"older regularities of the solution are affected by nonlinearities, $\lambda$ and $\lambda_0.$

\end{abstract}

\maketitle

\section{Introduction}

One of the most well-known example of semilinear stochastic partial differential equation (SPDE) is a stochastic generalized Burgers' equation driven by space-time white noise $\dot W$:
\begin{equation}
\label{burgers_type_equation}
u_t =  Lu + f(u) +  (g(u))_{x}  + \sigma(u)\dot W_t,\quad t>0\,; \quad u(0,\cdot) = u_0(\cdot),
\end{equation}
where $L$ is a second order operator, $f(u)$ and $g(u)$ are nonlinear functions, and $\sigma(u)$ is a bounded function. Many studies have been conducted for equation \eqref{burgers_type_equation}; see \cite{bertini1994stochastic,da1994stochastic,da1995stochastic,gyongy1998existence,gyongy1999stochastic,gyongy2000lp,leon2000stochastic,englezos2013stochastic,catuogno2014strong,lewis2018stochastic,rockner2006kolmogorov,crighton1992modern,ladyzhenskaia1968linear,dix1996nonuniqueness,bekiranov1996initial,tersenov2010generalized}. For example, if $f(u) = 0$ and $g(u) = u^2$, the existence and properties of a solution to \eqref{burgers_type_equation} with additive noise are introduced in \cite{bertini1994stochastic,da1994stochastic}. Also, in \cite{da1995stochastic}, similar results are obtained for equation \eqref{burgers_type_equation} with  multiplicative noise. In \cite{leon2000stochastic,lewis2018stochastic}, the regularity and moment estimate of solutions  are achieved. Besides, the unique solvability of stochastic Burger's equation with random coefficients  is contained in \cite{englezos2013stochastic}. Furthermore, the unique solvability of \eqref{burgers_type_equation} with more general $f$ and $g$ is considered in \cite{gyongy1998existence,gyongy1999stochastic,gyongy2000lp}. Especially in \cite{gyongy1998existence}, the unique solvability of \eqref{burgers_type_equation} is provided under the assumption that $f$ is linear and $g$ has quadratic growth. 
For more information, see \cite{rockner2006kolmogorov} and references therein.

One can notice that most of the above results are obtained under the condition that $\sigma(u)$ is bounded. Thus, as another example of a semilinear equation, we suggest an equation having an unbounded diffusion coefficient. Consider a stochastic partial differential equation with nonlinear diffusion coefficient $|u|^\gamma$;
\begin{equation}
\label{second_order_SPDE_example_1}
u_t =  Lu + \xi|u|^{\gamma} \dot W,\quad t>0 \,;\quad u(0,\cdot) = u_0,
\end{equation}
where $L$ is a second order differential operator, $\gamma>0$, and $\dot W$ is a space-time white noise; see \cite{Kijung,kry1999analytic,krylov1997result,krylov1997spde,mueller1991long,mueller2014nonuniqueness,mytnik1998weak,mytnik2011pathwise,walsh1986introduction,Xiong,choi2021regularity,han2019boundary}. Particularly, \cite{kry1999analytic,krylov1997result,krylov1997spde,mueller1991long,choi2021regularity,han2019boundary} describe the solvability of \eqref{second_order_SPDE_example_1} with $\gamma > 1$. In \cite{mueller1991long}, the long-time existence of a mild solution to equation \eqref{second_order_SPDE_example_1} is proved if $L = \Delta$, $\xi = 1$, $\cI = (0,1)$, and $u(0) = u_0$ is deterministic, continuous, and nonnegative. On the other hand, in \cite[Section 8.4]{kry1999analytic}, the existence of a solution in $L_p(\bR)$ spaces is obtained with a second-order differential operator $L = aD^2 + bD + c$, where the bounded coefficients $a,b,c,\xi$ depending on $(\omega,t,x)$. Also, in \cite{han2019boundary}, on a unit interval, interior regularity and boundary behavior of a solution are obtained with $L = aD^2+bD+c$, random and space and time-dependent coefficients $a,b,c,\xi$, and random nonnegative initial data $u(0) = u_0$. In the case of $\gamma > 3/2$, \cite{mueller2000critical} shows that if $L = \Delta$, $\xi = 1$, and nonnegative initial data $u_0$ is nontrivial and vanishing at endpoints, then there is a positive probability that a solution blows up in finite time.

This paper aims to obtain the uniqueness, existence, $L_p$ regularity, and maximal H\"older regularity of the solution to a semilinear stochastic partial differential equation driven by a multiplicative space-time white noise
\begin{equation}
\label{main_equation}
u_t = a u_{xx} + bu_x + cu + |u|^\lambda u_x  + \sigma(u)\dot W_t,\quad t>0\,; \quad u(0,\cdot) = u_0(\cdot),
\end{equation}
where $\lambda > 0$, and $\dot W$ is a space-time white noise. The function $u_0$ is a nonnegative random initial data. The coefficient $a(\omega,t,x),b(\omega,t,x),c(\omega,t,x)$ are $\cP\times\cB(\bR)$-measurable, and $\bar b(\omega,t)$ is $\cP$-measurable. The coefficients $a,b,c,$ and $\bar b$ are uniformly bounded, and the leading coefficient $a$ satisfies the ellipticity condition. The diffusion coefficient $\sigma(\omega,t,x,u)$ is $\cP\times\cB(\bR^d)\times\cB(\bR)$-measurable. According to the conditions on $\lambda$ and $\sigma(u)$, we separate two cases;
\begin{enumerate}[(i)]
\item
$\lambda\in(0,1]$ and $\sigma(u)$ has \textit{bounded} Lipschitz continuity and linear growth in $u$,

\item
$\lambda\in(0,1)$ and  $\sigma(u) = \mu|u|^{1+\lambda_0}$ with $\lambda_0\in[0,1/2)$ (i.e., $\sigma(u)$ is super-linear).
\end{enumerate}
It should be noted that different condition on $\lambda$ is assumed depending on the types of diffusion coefficient $\sigma(u)$. In other words, if $\sigma(u)$ is \textit{bounded} in $u$, $\lambda$ is assumed to be less than or \textit{equal} to $1$; $\lambda\in(0,1]$. On the other hand, if $\sigma(u) = |u|^{1+\lambda_0}$ with $[0,1/2)$, $\lambda$ is considered to be less than $1$; $\lambda\in(0,1)$.

Our result has several advantages. The first novelty of our work is that all the results are obtained with random initial data $u_0(\omega, x)$ and random coefficients $a(\omega,t,x)$, $b(\omega,t,x)$, $c(\omega,t, x)$, $\bar{b}(\omega,t)$, and $\sigma(\omega,t,x,u)$. This accomplishment is one of the benefit of the Krylov's $L_p$-theory, compared with the researches that the coefficients $a, b, c, \bar b$ are assumed to be constants.

Secondly, the regularity of the solution is presented, and the dependence of the regularity of the solution is proposed. For example, if $\lambda\in(0,1]$, $\sigma(u)$ has Lipschitz continuity, linear growth, and boundedness in $u$, and nonnegative initial data $u_0\in U_p^{1/2}\cap L_1(\Omega;L_1(\bR))$ for any $p>2$, then equation \eqref{main_equation} has a solution $u\in \cH_{p,loc}^{1/2}$ (see Definition \ref{definition_of_sol_space}) such that for any $T<\infty$ and small $\ep>0$, almost surely
\begin{equation*}
u \in C^{1/2 - \ep,1/4 - \ep}_{t,x}([0,T]\times\bR).
\end{equation*}
On the other hand, if $\lambda \in (0,1)$, $\sigma(u) = |u|^{1+\lambda_0}$ with $\lambda_0 \in [0,1/2)$, and nonnegative initial data $u_0\in U_p^{1/2-\left(\lambda - 1/2\right)\vee\lambda_0}\cap L_1(\Omega;L_1(\bR))$ for any $p>2$, then equation \eqref{main_equation} has a solution $u\in\cH_{p,loc}^{1/2-(\lambda -1/2)\vee\lambda_0}$ such that for any $T<\infty$ and small $\ep>0$, almost surely 
\begin{equation*}
u \in C^{\frac{1/2-(\lambda -1/2)\vee\lambda_0}{2} - \ep,1/2-\left(\lambda - 1/2\right)\vee\lambda_0 - \ep}_{t,x}([0,T]\times\bR).
\end{equation*}
It should be noted that if the diffusion coefficient $\sigma(u)$ is \textit{bounded}, the regularities of the solution is independent of $\lambda$. However, if the diffusion coefficient $\sigma(u)=|u|^{1+\lambda_0}$, $\lambda$ and $\lambda_0$ affect the solution regularities.

Lastly, we suggest a sufficient condition for the unique solvability in $L_p$ spaces. For example, the coefficient $\bar b$ is considered to be independent of $x$ to handle the nonlinear term $\bar{b} |u|^\lambda u_x$. Also, we assume algebraic conditions on $\lambda$ (and $\lambda_0$ if $\sigma(u) = |u|^{1+\lambda_0}$). Specifically, if $\sigma(u)$ has Lipschitz continuity, linear growth, and \textit{boundedness} in $u$, $\lambda$ is taken to be less than or \textit{equal} to $1$; $\lambda\in(0,1]$. On the other hand, if $\sigma(u) = |u|^{1+\lambda_0}$, $\lambda$ and $\lambda_0$ are expected to satisfy  $\lambda\in(0,1)$ and $\lambda_0\in[0,1/2)$. Indeed, if $\sigma(u)$ has \textit{boundedness} in $u$, we separate the solution $u$ into the noise-related part and the nonlinear-related part. Then, to control the nonlinear-related part, the chain rule and fundamental theorem of calculus are employed with the assumption that $\lambda\in(0,1]$; see Lemma \ref{Lq_bound_2}. However, if $\sigma(u) = |u|^{1+\lambda_0}$, the nonlinear terms $|u|^\lambda u_x$ and $|u|^{1+\lambda_0}$ should be controlled simultaneously. Since the uniform $L_1$ bound of the solution is obtained and the nonlinear terms are interpreted as
$$ u^\lambda u_{x} = \frac{1}{1+\lambda} \left( |u|^{1+\lambda} \right)_{x} = \frac{1}{1+\lambda} \left( |u|^{\lambda}\cdot |u| \right)_{x}\quad\text{and}\quad |u|^{1+\lambda_0} = |u|^{\lambda_0}\cdot |u|,
$$
the surplus parts $|u|^\lambda$ and $|u|^{\lambda_0}$ should be dominated by the uniform $L_1$ bound of the solution in the estimate. Therefore, $|u|^\lambda$ and $|u|^{\lambda_0}$ need to be summable to power $s = 1/\lambda$ and $2s_0 = 1/\lambda_0$ with $s,s_0>1$; see Section \ref{Proof of the second case}. Thus, we assume $\lambda\in(0,1)$ and $\lambda_0\in[0,1/2)$.

This paper is organized as follows. In Section \ref{sec:preliminaries}, preliminary definitions and properties are introduced. Section \ref{sec:main_results} provides the existence, uniqueness, $L_p$-regularity, and maximal H\"older regularity of a solution to equation \eqref{main_equation}. Sections \ref{Proof of the first case} and \ref{Proof of the second case} contain proof of the main results.

We finish introduction with the notations. Let $\bR$ and $\bN$ denote the set of real numbers and natural numbers, respectively. We use $:=$ to denote definition.  For a real-valued function $f$, we define
\begin{equation*}
f^+:= \frac{f+|f|}{2},\quad f^- = -\frac{f-|f|}{2}
\end{equation*} For a normed space $F$, a measure space $(X,\mathcal{M},\mu)$, and $p\in [1,\infty)$, a space $L_{p}(X,\cM,\mu;F)$ is a set of $F$-valued $\mathcal{M}^{\mu}$-measurable function satisfying
\begin{equation}
\label{def_norm}
\| u \|_{L_{p}(X,\cM,\mu;F)} := \left( \int_{X} \| u(x) \|_{F}^{p}\mu(dx)\right)^{1/p}<\infty.
\end{equation}
A set $\mathcal{M}^{\mu}$ is the completion of $\cM$ with respect to the measure $\mu$.
For $\alpha\in(0,1]$ and $T>0$, a set $C^{\alpha}([0,T];F)$ is the set of $F$-valued continuous functions $u$ such that
$$ |u|_{C^{\alpha}([0,T];F)}:=\sup_{t\in[0,T]}|u(t)|_{F}+\sup_{\substack{s,t\in[0,T], \\ s\neq t}}\frac{|u(t)-u(s)|_F}{|t-s|^{\alpha}}<\infty.$$
For $a,b\in \bR$, set
$a \wedge b := \min\{a,b\}$, $a \vee b := \max\{a,b\}$.
Let $\cS = \cS(\bR^d)$ denote the set of Schwartz functions on $\bR^d$. The Einstein's summation convention with respect to $i,j$, and $k$ is assumed throughout the article. A generic constant is denoted as $N$, which varies from line to line, and $N=N(a,b,\ldots)$ denotes that the constant $N$ depends only on $a,b,\ldots$. For functions depending on $\omega$, $t$,  and $x$, the argument $\omega \in \Omega$ is usually omitted.

\section{Preliminaries} 
\label{sec:preliminaries}

This section is devoted to reviewing the definitions and properties of the Brownian sheet $W(A)$ and stochastic Banach spaces $\cH_p^{\gamma+2}(\tau)$. The Brownian sheet $W(A)$ induces space-time white noise $\dot W$, and it is used to interpret equation \eqref{main_equation}. The stochastic Banach space $\cH_p^{\gamma+2}(\tau)$ is employed as a solution space. For more detail, see \cite{kry1999analytic,grafakos2009modern,krylov2008lectures}.

Let $(\Omega, \cF, P)$ be a complete probability space equipped with a filtration $\{\cF_t\}_{t\geq0}$. The filtration $\{\cF_t\}_{t\geq0}$ is assumed to satisfy the usual conditions. A set $\cP$ denotes the predictable $\sigma$-field related to $\{\cF_t\}_{t\geq0}$. 

\begin{definition}[The Brownian sheet]
The Brownian sheet $\{W(A): A\in\cB((0,\infty)\times\bR), \Lambda(A) < \infty \}$ is a centered Gaussian random field and its covariance is given by
\begin{equation*} 
\bE[W(A)W(B)]=\Lambda(A \cap B),
\end{equation*}
where $A,B$ are bounded Borel subsets in $(0,\infty)\times\bR$, and $\Lambda(\cdot)$ is the Lebesgue measure on $(0,\infty)\times\bR$.
\end{definition}

\begin{remark} \label{remark:representation} 

To interpret the noise part of equation \eqref{main_equation}, we employ Walsh's stochastic integral with respect to Gaussian random measure $W(ds,dx)$; see \cite{walsh1986introduction}. It should be mentioned that the stochastic integral with respect to the martingale measure can be written as a series of It\^o's stochastic integral. For example, if $A$ is a bounded Borel measurable subset in $(0,\infty)\times\bR$, we have
\begin{equation} 
\label{equivalent_def}
W(A) = \int_{A} W(ds,dx) = \sum_{k=1}^{\infty}\int_0^\infty \int_{\bR} 1_{A}(s,x) \eta_k(x)dxdw^k_s,
\end{equation}
where $\{w_t^k:k\in\bN\}$ is a set of one-dimensional independent Wiener processes and $\{\eta_k:k\in\bN\}$ is a set of orthonormal basis in $L_2(\bR)$. Without loss of generality, we may assume that $\eta_k$ is bounded for each $k$; e.g. see \cite[Section 8.3]{kry1999analytic}.
\end{remark}

\begin{definition}[Bessel potential space]
Let $p>1$ and $\gamma \in \mathbb{R}$. The space $H_p^\gamma=H_p^\gamma(\bR)$ is the set of all tempered distributions $u$ on $\bR$ such that
$$ \| u \|_{H_p^\gamma} := \left\| (1-\Delta)^{\gamma/2} u\right\|_{L_p} = \left\| \cF^{-1}\left[ \left(1+|\xi|^2\right)^{\gamma/2}\cF(u)(\xi)\right]\right\|_{L_p}<\infty.
$$
Similarly, $H_p^\gamma(\ell_2) = H_p^\gamma(\bR;\ell_2)$ is a space of $\ell_2$-valued functions $g=(g^1,g^2,\cdots)$ such that 
$$ \|g\|_{H_{p}^\gamma(\ell_2)}:= \left\| \left| \left(1-\Delta\right)^{\gamma/2} g\right|_{\ell_2}\right\|_{L_p} = \left\| \left|\cF^{-1}\left[ \left(1+|\xi|^2\right)^{\gamma/2}\cF(g)(\xi)\right]\right|_{\ell_2} \right\|_{L_p}
< \infty. 
$$
When $\gamma = 0$, we write $L_p := H_p^0$ and $L_p(\ell_2) := H_p^0(\ell_2)$.
\end{definition}

\begin{remark} \label{Kernel}
Note that for $\gamma\in (0,\infty)$ and $u\in \cS$, the operator $(1-\Delta)^{-\gamma/2}$ has a representation.
\begin{equation*}
(1-\Delta)^{-\gamma/2}u(x)=\int_{\bR}R_{\gamma}(x-y)u(y)dy,
\end{equation*}
where 
\begin{equation*} 
|R_\gamma(x)| \leq N(\gamma)\left(e^{-\frac{|x|}{2}}1_{|x|\geq2} + A_\gamma(x)1_{|x|<2}\right),
\end{equation*}
and
\begin{equation*}
\begin{aligned}
A_{\gamma}(x):=
\begin{cases}
|x|^{\gamma-1} + 1 + O(|x|^{\gamma+1}) \quad &\mbox{if} \quad 0<\gamma<1,\\ 
\log(2/|x|) + 1 + O(|x|^{2}) \quad &\mbox{if} \quad \gamma=1,\\ 
1 + O(|x|^{\gamma-1}) \quad &\mbox{if} \quad \gamma>1.
\end{cases}
\end{aligned}
\end{equation*}
For more detail, see \cite[Proposition 1.2.5.]{grafakos2009modern}.

\end{remark}

We introduce the space of point-wise multipliers in $H_p^\gamma$.

\begin{definition}
\label{def_pointwise_multiplier}
Fix $\gamma\in\bR$ and $\alpha\in[0,1)$ such that $\alpha = 0$ if $\gamma\in\bZ$ and $\alpha>0$ if $|\gamma|+\alpha$ is not an integer. Define
\begin{equation*}
\begin{aligned}
B^{|\gamma|+\alpha} = 
\begin{cases}
B(\bR) &\quad\text{if } \gamma = 0, \\
C^{|\gamma|-1,1}(\bR) &\quad\text{if $\gamma$ is a nonzero integer}, \\
C^{|\gamma|+\alpha}(\bR) &\quad\text{otherwise};
\end{cases}
\end{aligned}
\end{equation*}
\begin{equation*}
\begin{aligned}
B^{|\gamma|+\alpha}(\ell_2) = 
\begin{cases}
B(\bR,\ell_2) &\quad\text{if } \gamma = 0, \\
C^{|\gamma|-1,1}(\bR,\ell_2) &\quad\text{if $\gamma$ is a nonzero integer}, \\
C^{|\gamma|+\alpha}(\bR,\ell_2) &\quad\text{otherwise},
\end{cases}
\end{aligned}
\end{equation*}
where $B(\bR)$ is the space of bounded Borel functions on $\bR$, $C^{|\gamma|-1,1}(\bR)$ is the space of $|\gamma|-1$ times continuous differentiable functions whose derivatives of $(|\gamma|-1)$-th order derivative are Lipschitz continuous, and $C^{|\gamma|+\alpha}$ is the real-valued H\"older spaces. The space $B(\ell_2)$ denotes a function space with $\ell_2$-valued functions, instead of real-valued function spaces.

\end{definition}

Below we gather the properties of $H_p^\gamma$.

\begin{lemma}
\label{prop_of_bessel_space} 
Let $p>1$ and $\gamma \in \bR$. 
\begin{enumerate}[(i)]
\item 
\label{dense_subset_bessel_potential}
The space  $C_c^\infty(\bR)$ is dense in $H_{p}^{\gamma}$. 

\item
\label{sobolev-embedding} 
Let $\gamma - 1/p = n+\nu$ for some $n=0,1,\cdots$ and $\nu\in(0,1]$. Then, for any  $i\in\{ 0,1,\cdots,n \}$, we have
\begin{equation} 
\label{holder embedding}
\left| D^i u \right|_{C(\bR)} + \left[D^n u\right]_{\cC^\nu(\bR)} \leq N \| u \|_{H_{p}^\gamma},
\end{equation}
where $N = N(p,\gamma)$ and $\cC^\nu$ is a Zygmund space.

\item
\label{bounded_operator}
The operator $D_i:H_p^{\gamma}\to H_p^{\gamma+1}$ is bounded. Moreover, for any $u\in H_p^{\gamma+1}$,
$$ \left\| D^i u \right\|_{H_p^\gamma} \leq N\| u \|_{H_p^{\gamma+1}},
$$
where $N = N(p,\gamma)$.

\item
\label{norm_bounded}
Let $\mu\leq\gamma$ and $u\in H_p^\gamma$. Then $u\in H_p^\mu$ and 
$$ \| u \|_{H_p^\mu} \leq \| u \|_{H_p^\gamma}. 
$$

\item 
\label{iso} (isometry). For any $\mu,\gamma\in\bR$, the operator $(1-\Delta)^{\mu/2}:H_p^\gamma\to H_p^{\gamma-\mu}$ is an isometry.

\item
\label{multi_ineq} (multiplicative inequality). Let 
\begin{equation*} \label{condition_of_constants_interpolation}
\begin{gathered}
\ep\in[0,1],\quad p_i\in(1,\infty),\quad\gamma_i\in \bR,\quad i=0,1,\\
\gamma=\ep\gamma_0+(1-\ep)\gamma_1,\quad1/p=\ep/p_0+(1-\ep)/p_1.
\end{gathered}
\end{equation*}
Then, we have
\begin{equation*}
\|u\|_{H^\gamma_{p}} \leq \|u\|^{\ep}_{H^{\gamma_0}_{p_0}}\|u\|^{1-\ep}_{H^{\gamma_1}_{p_1}}.
\end{equation*}

\item \label{pointwise_multiplier}
Let $u\in H_p^\gamma$. Then, we have
\begin{equation*}
\| au \|_{H_p^\gamma} \leq N\| a \|_{B^{|\gamma|+\alpha}}\| u \|_{H_p^\gamma}\quad\text{and}\quad\| bu \|_{H_p^\gamma(\ell_2)} \leq N\| b \|_{B^{|\gamma|+\alpha}(\ell_2)}\| u \|_{H_p^\gamma},
\end{equation*}
where $N = N(\gamma,p)$ and $B^{|\gamma|+\alpha},B^{|\gamma|+\alpha}(\ell_2)$ are introduced in Definition \ref{def_pointwise_multiplier}.
\end{enumerate}
\end{lemma}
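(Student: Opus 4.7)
The plan is to treat each of the seven items by invoking, or briefly re-deriving from, the standard harmonic-analytic machinery on $\bR$; most statements are classical and appear in the references already cited (Grafakos, Krylov), so my proposal is to organize the proof as a short catalogue of arguments rather than a self-contained treatise.

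First I would dispose of the easy structural items (i), (iv), (v). The isometry (v) is immediate from the definition of the norm: since $(1-\Delta)^{\mu/2}\colon H_p^\gamma\to H_p^{\gamma-\mu}$ is, by construction, intertwined with the identity of $L_p$ through the Bessel operators, both norms coincide; this is a one-line verification at the level of $\cF$ and $\cF^{-1}$. For (iv), writing $\|u\|_{H_p^\mu}=\|(1-\Delta)^{(\mu-\gamma)/2}(1-\Delta)^{\gamma/2}u\|_{L_p}$ and using that the Bessel kernel $R_{\gamma-\mu}$ of Remark \ref{Kernel} has $L_1$ norm equal to $1$ (its Fourier transform evaluated at $0$), Young's inequality gives the bound by $\|u\|_{H_p^\gamma}$. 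Density of $C_c^\infty$ in $H_p^\gamma$ (item (i)) follows by first noting that $\cS$ is dense (apply $(1-\Delta)^{-\gamma/2}$ to a sequence in $\cS$ approximating $(1-\Delta)^{\gamma/2}u$ in $L_p$) and then truncating Schwartz functions by smooth cutoffs.

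Next I would handle the two operator-theoretic items (iii) and (vi). For (iii), the key point is that $D_i(1-\Delta)^{-1/2}$ is a Fourier multiplier with symbol $i\xi_i(1+|\xi|^2)^{-1/2}$, which satisfies the Mihlin–Hörmander hypotheses, hence is bounded on $L_p$ for $1<p<\infty$; composing with $(1-\Delta)^{(\gamma+1)/2}$ gives the claim. The multiplicative inequality (vi) is the standard complex interpolation statement for Bessel potential spaces; one can either cite Bergh–Löfström directly or recall that $(H_{p_0}^{\gamma_0},H_{p_1}^{\gamma_1})_{[\ep]}=H_p^\gamma$ and invoke the three-lines lemma for an analytic family of operators built from $(1-\Delta)^{z}$.

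Then I would address the Sobolev/Zygmund embedding (ii). Using the representation of $(1-\Delta)^{-\gamma/2}$ through the kernel $R_\gamma$ of Remark \ref{Kernel}, one writes $u=R_\gamma \ast((1-\Delta)^{\gamma/2}u)$ and controls the $C(\bR)$, respectively the $\cC^\nu(\bR)$, norm by H\"older's inequality, exploiting that the kernel decay and singularity encoded in $A_\gamma$ are integrable in the dual exponent precisely when $\gamma-1/p>0$, respectively when $\gamma-1/p=n+\nu$; differentiating $i$ times moves the exponent from $\gamma$ to $\gamma-i$ and the same computation applies.

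The main obstacle — and the only item that really needs care — is the pointwise multiplier estimate (vii), since for non-integer $\gamma$ and for negative $\gamma$ one must go beyond Leibniz. My plan is: for $\gamma\in\bZ_{\ge 0}$, use the Leibniz rule together with (iii) and Hölder; for $\gamma\in(0,\infty)\setminus\bZ$, invoke the characterization of $H_p^\gamma$ via differences (or paraproduct decomposition) so that multiplication by $a\in C^{\gamma+\alpha}$ preserves the space with the quantitative bound; and for $\gamma<0$, argue by duality using the pairing $\langle H_p^\gamma,H_{p'}^{-\gamma}\rangle$ and the already-proved positive-regularity case, since $\|a v\|_{H_{p'}^{-\gamma}}\le N\|a\|_{B^{|\gamma|+\alpha}}\|v\|_{H_{p'}^{-\gamma}}$. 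The $\ell_2$-valued variant is identical since the pointwise multiplier acts componentwise and the Mihlin multiplier theorem works equally well in the $L_p(\ell_2)$ scale. Throughout I would simply cite Krylov \cite{kry1999analytic} for the precise statements, as the paper's own conventions match his.
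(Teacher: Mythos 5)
Your proposal is correct and, in the end, takes the same route as the paper: the paper's proof of this lemma consists entirely of citations (Theorems 13.3.7, 13.8.1, 13.3.10, Corollary 13.3.9 and Exercise 13.3.20 of Krylov's \emph{Lectures on elliptic and parabolic equations in Sobolev spaces} for items (i)--(vi), and Lemma 5.2 of Krylov's \emph{An analytic approach to SPDEs} for item (vii)), which is exactly where you also defer. The sketches you add (Young's inequality with the unit-mass Bessel kernel for (iv), Mihlin for (iii), complex interpolation for (vi), Leibniz/paraproduct/duality for (vii)) are the standard arguments behind those cited results, so nothing further is needed.
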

\begin{proof}
The above results are well-known; for instance, for \eqref{dense_subset_bessel_potential}-\eqref{multi_ineq}, see Theorem 13.3.7 (i), Theorem 13.8.1, Theorem 13.3.10, Corollary 13.3.9, Theorem 13.3.7 (ii), Exercise 13.3.20 of \cite{krylov2008lectures}, respectively. 
For \eqref{pointwise_multiplier}, see  \cite[Lemma 5.2]{kry1999analytic}.
\qed
\end{proof}

Now stochastic Banach spaces and solution spaces are provided. For more detail, see Section 3 of \cite{kry1999analytic}.

\begin{definition}[Stochastic Banach spaces]
Let $\tau\leq T$ be a bounded stopping time, $p>1$ and $\gamma\in\bR$. Define
\begin{gather*}
\opar0,\tau\cbrk:=\{ (\omega,t):0<t\leq \tau(\omega) \},\\
\mathbb{H}_{p}^{\gamma}(\tau) := L_p\left(\opar0,\tau\cbrk, \mathcal{P}, dP \times dt ; H_{p}^\gamma\right),\\
\mathbb{H}_{p}^{\gamma}(\tau,\ell_2) := L_p\left(\opar0,\tau\cbrk,\mathcal{P}, dP \times dt;H_{p}^\gamma(\ell_2)\right),\\
U_{p}^{\gamma} :=  L_p\left(\Omega,\cF_0, dP ; H_{p}^{\gamma-2/p}\right).
\end{gather*}
If $\gamma = 0$, we use notation $\bL$ instead of $\bH$. For example, $\mathbb{L}_{p}(\tau) = \mathbb{H}_{p}^{0}(\tau)$. Also, we define the norm of above space as in \eqref{def_norm}. 
\end{definition}

\begin{definition}[Solution spaces] 
\label{definition_of_sol_space}
Let $\tau\leq T$ be a bounded stopping time and $p\geq2$. 

\begin{enumerate}[(i)]
\item 
For $u\in \bH_p^{\gamma+2}(\tau)$, we write $u\in\cH^{\gamma+2}_p(\tau)$ if there exists $u_0\in U_{p}^{\gamma+2}$ and  $(f,g)\in
\bH_{p}^{\gamma}(\tau)\times\bH_{p}^{\gamma+1}(\tau,\ell_2)$ such that
\begin{equation*}
du = fdt+\sum_{k=1}^{\infty} g^k dw_t^k,\quad   t\in (0, \tau]\,; \quad u(0,\cdot) = u_0
\end{equation*}
in the sense of distributions. In other words, for any $\phi\in \cS$, the equality
\begin{equation} \label{def_of_sol}
(u(t,\cdot),\phi) = (u_0,\phi) + \int_0^t(f(s,\cdot),\phi)ds + \sum_{k=1}^{\infty} \int_0^t(g^k(s,\cdot),\phi)dw_s^k
\end{equation}
holds for all $t\in [0,\tau]$ almost surely. 

\item

The norm of the function space $\cH_{p}^{\gamma+2}(\tau)$ is defined as
\begin{equation}
\label{def_of_sol_norm}
\| u \|_{\cH_{p}^{\gamma+2}(\tau)} :=  \| u \|_{\mathbb{H}_{p}^{\gamma+2}(\tau)} + \| f \|_{\mathbb{H}_{p}^{\gamma}(\tau)} + \| g \|_{\mathbb{H}_{p}^{\gamma+1}(\tau,\ell_2)} + \| u_0 \|_{U_{p}^{\gamma+2}}.
\end{equation}

\item
When $\gamma + 2 = 0$, we write $\cL_p(\tau) := \cH_p^0(\tau)$.

\item \label{def_of_local_sol_space}
For a stopping time $\tau \in [0,\infty]$, we say $u \in \cH_{p,loc}^{\gamma+2}(\tau)$ if there exists a sequence of bounded stopping times $\{ \tau_n : n\in\bN \}$ such that $\tau_n\uparrow \tau$ (a.s.) as $n\to\infty$ and $u\in \cH_{p}^{\gamma+2}(\tau_n)$ for each $n$. The stopping time $\tau$ is omitted if $\tau = \infty$. We write $u = v$ in $\cH_{p,loc}^{\gamma+2}(\tau)$ if there exists a sequence of bounded stopping times $\{ \tau_n : n\in\bN \}$ such that $\tau_n\uparrow\tau$ (a.s.) as $n\to\infty$ and $u = v$ in $\cH_{p}^{\gamma+2}(\tau_n)$ for each $n$. 
\end{enumerate}

\end{definition}

\begin{remark}
Let $p\geq 2$ and $\gamma\in \bR$. For any $g\in \bH^{\gamma+1}_{p}(\tau,\ell_2)$,  the series of stochastic integral in \eqref{def_of_sol} converges uniformly in $t$ in probability on $[0,\tau \wedge T]$ for any $T$. Therefore, $(u(t,\cdot),\phi)$ is continuous in $t$ (See, e.g., \cite[Remark 3.2]{kry1999analytic}).
\end{remark}

\begin{remark}
\label{equation_sense_remark}
A stochastic partial differential equation driven by space-time white noise is understood in the sense of distribution. For example, for a bounded stopping time $\tau\leq T$, consider 
\begin{equation} \label{example_eq_STWN}
u_t =  u_{xx} + u \dot{W}_t, \quad (t,x)\in(0,\tau)\times \bR \,; \quad u(0,\cdot) = u_0.
\end{equation}
We interpret equation \eqref{example_eq_STWN} as follows: for any $\phi\in\cS$, equality
\begin{equation*}
\begin{aligned}
(u(t,\cdot),\phi) &= (u_0,\phi) + \int_0^t\int_{\bR} u(s,x)\phi_{xx}(x) dxds 
 + \int_0^t\int_{\bR} u(s,x)\phi(x)W(ds,dx)
\end{aligned}
\end{equation*}
holds for all $t\leq \tau$ almost surely. By the way, due to Remark \ref{remark:representation}, the Walsh's integral with respect to $W(ds,dx)$ can be written as the series of It\^o stochastic integral: 
$$ \int_{0}^t\int_{\bR}  u(s,x) \phi(x)\,W(ds,dx) = \sum_{k=1}^{\infty}\int_0^t\int_{\bR}  u(s,x) \phi(x) \eta_k(x) dxdw^k_s 
$$
holds for all $t\leq\tau$ almost surely. Thus, equation \eqref{example_eq_STWN} means for any $\phi\in \cS$, the equality
\begin{equation*} 
\begin{aligned}
(u(t,\cdot),\phi) &= (u_0,\phi) + \int_0^t\int_{\bR} u(s,x)\phi_{xx}(x) dxds  +  \int_0^t\int_{\bR} u(s,x)\phi(x)\eta_k(x) dxdw_s^k
\end{aligned}
\end{equation*}
holds for all $t\in [0,\tau]$ almost surely. Therefore, we consider 
\begin{equation*}
du = u_{xx} dt +  u \eta_k dw^k_t,
\quad (t,x)\in(0,\tau)\times \bR \,; \quad u(0,\cdot) = u_0.
\end{equation*}
\end{remark}

\vspace{2mm}

Next the properties of the solution space $\cH_p^{\gamma+2}(\tau)$ are introduced.

\begin{theorem} 
\label{embedding}
Let $\tau\leq T$ be a bounded stopping time.
\begin{enumerate}[(i)]

\item \label{completeness}
For any $p\geq2$, $\gamma\in\bR$, $\cH_p^{\gamma+2}(\tau)$ is a Banach space with the norm $\| \cdot \|_{\cH_p^{\gamma+2}(\tau)}$.

\item \label{large-p-embedding} 
If $p>2$, $\gamma\in\bR$, and $1/p < \alpha < \beta < 1/2$, then for any  $u\in\cH_{p}^{\gamma+2}(\tau)$, we have $u\in C^{\alpha-1/p}\left([0,\tau];H_{p}^{\gamma+2-2\beta}\right)$ (a.s.) and
\begin{equation} 
\label{solution_embedding}
\mathbb{E}| u |^p_{C^{\alpha-1/p}([0,\tau];H_{p}^{\gamma + 2 - 2\beta} )} \leq N(d,p,\gamma,\alpha,\beta,T)\| u \|^p_{\cH_{p}^{\gamma+2}(\tau)}.
\end{equation}

\item \label{gronwall_type_ineq} 
Let $p > 2$, $\gamma\in\bR$, and $u\in\cH_p^{\gamma+2}(\tau)$. If there exists $\gamma_0 < \gamma$ such that
\begin{equation}
\label{gronwall_type_ineq_condi}
\| u \|_{\cH_p^{\gamma+2}(\tau\wedge t)}^p \leq N_0 + N_1 \| u \|_{\bH_p^{\gamma_0+2}(\tau\wedge t)}^p
\end{equation}
for all $t\in(0,T)$, then we have
\begin{equation} \label{modified_Gronwall}
\| u \|_{\cH_p^{\gamma+2}(\tau\wedge T)}^p \leq N_0N.
\end{equation}
where $N =  N(N_1,p,\gamma,T)$.

\end{enumerate}
\end{theorem}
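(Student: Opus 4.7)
For part (i), I would identify $\cH_p^{\gamma+2}(\tau)$ with a closed subspace of the Banach product $U_p^{\gamma+2}\times\bH_p^\gamma(\tau)\times\bH_p^{\gamma+1}(\tau,\ell_2)\times\bH_p^{\gamma+2}(\tau)$ via $u\mapsto(u_0,f,g,u)$. The map is well defined because the decomposition $du=f\,dt+g^k\,dw^k$ is unique: for each $\phi\in\cS$ the continuous semimartingale $(u(\cdot),\phi)$ has a unique Doob--Meyer decomposition, so the bounded-variation part pins down $f$ and the quadratic variation pins down $g^k$ (working along a countable dense family of test functions). Given a Cauchy sequence $\{u_n\}$ in $\cH_p^{\gamma+2}(\tau)$, the component sequences $\{u_n(0)\}$, $\{f_n\}$, $\{g_n\}$, $\{u_n\}$ converge in their Banach spaces to some $u_0$, $f$, $g$, $u$, and passing to the limit in the defining identity \eqref{def_of_sol} (using BDG for the stochastic integral) shows that $u$ solves $du=f\,dt+g^k\,dw^k$ with initial condition $u_0$, so $u\in\cH_p^{\gamma+2}(\tau)$ and $u_n\to u$ in norm.

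For part (ii), I would apply the Kolmogorov continuity theorem to the $H_p^{\gamma+2-2\beta}$-valued process $t\mapsto u(t)$. Writing $u(t)-u(s)=\int_s^t f\,dr+\sum_k\int_s^t g^k\,dw_r^k$, the Bessel-potential version of BDG and Minkowski's inequality, combined with the interpolation of Lemma \ref{prop_of_bessel_space}\eqref{multi_ineq} (used to trade the $2\beta$ spatial slack against the regularities of $f\in\bH_p^\gamma$ and $g\in\bH_p^{\gamma+1}(\ell_2)$), yield
\[\bE\,\|u(t)-u(s)\|_{H_p^{\gamma+2-2\beta}}^p\le N(p,\gamma,\beta,T)\,|t-s|^{\beta p}\,\|u\|_{\cH_p^{\gamma+2}(\tau)}^p.\]
Since $p>2$ one has $1/p<\beta$, and Kolmogorov's theorem delivers a Hölder-continuous version on $[0,\tau]$ with exponent $\alpha-1/p$ for every $\alpha\in(1/p,\beta)$, together with the moment bound \eqref{solution_embedding}.

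For part (iii), set $\Phi(t):=\|u\|_{\cH_p^{\gamma+2}(\tau\wedge t)}^p$, which is nondecreasing in $t$. Since $\gamma_0<\gamma$, I would pick $\beta\in(1/p,1/2)$ so small that $\gamma_0+2\le\gamma+2-2\beta$; by Lemma \ref{prop_of_bessel_space}\eqref{norm_bounded} this gives $\|u(s)\|_{H_p^{\gamma_0+2}}\le\|u(s)\|_{H_p^{\gamma+2-2\beta}}$. Applying part (ii) to $u$ restricted to $[0,\tau\wedge s]$ then yields $\bE[1_{s<\tau}\|u(s)\|_{H_p^{\gamma_0+2}}^p]\le N\,\Phi(s)$, and integrating over $s\in[0,t]$ gives $\|u\|_{\bH_p^{\gamma_0+2}(\tau\wedge t)}^p\le N\int_0^t\Phi(s)\,ds$. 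Substituting into the hypothesis \eqref{gronwall_type_ineq_condi} produces $\Phi(t)\le N_0+N_1 N\int_0^t\Phi(s)\,ds$, and the classical Gronwall inequality gives $\Phi(T)\le N_0\,e^{N_1 N T}$, which is \eqref{modified_Gronwall}.

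The main obstacle is the BDG-type time-increment estimate underlying (ii): because $f$ and $g$ live two and one derivatives below $u$ in the Bessel scale, one must trade the $2\beta$ spatial slack carefully against the parabolic/martingale smoothing of the stochastic convolution, while tracking constants uniformly in the bounded stopping time $\tau$. Once (ii) is in hand, the Gronwall step (iii) is essentially immediate, because the pointwise-in-time moment bound supplied by (ii) is exactly the ingredient needed to convert the algebraic hypothesis \eqref{gronwall_type_ineq_condi} into a time-integral inequality amenable to Gronwall's lemma.
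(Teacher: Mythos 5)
Your sketches for parts (i) and (ii) are reasonable outlines of the standard arguments; the paper does not reprove these but simply cites \cite[Theorems 3.7 and 7.2]{kry1999analytic}, whose proofs are essentially what you describe (closedness of the graph $u\mapsto(u_0,f,g,u)$ for completeness, and a Kolmogorov-type continuity estimate on increments for the embedding).

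Part (iii) is the only part the paper actually proves, and here your argument has a genuine gap. You propose to choose $\beta\in(1/p,1/2)$ with $\gamma_0+2\leq\gamma+2-2\beta$, i.e.\ $\beta\leq(\gamma-\gamma_0)/2$. Such a $\beta$ exists only if $\gamma-\gamma_0>2/p$, whereas the hypothesis only assumes $\gamma_0<\gamma$; when $\gamma_0$ is close to $\gamma$ (closer than $2/p$) no admissible $\beta$ exists, and indeed part (ii) can never control $\bE\sup_{t}\|u(t)\|^p_{H_p^{\gamma_0+2}}$ in that regime, since the sup-in-time norm it provides always loses strictly more than $2/p$ spatial derivatives. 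The paper's proof inserts an extra step that you are missing: it first applies the multiplicative inequality (Lemma \ref{prop_of_bessel_space}\eqref{multi_ineq}) together with Young's inequality to write
\begin{equation*}
N_1\|u\|^p_{\bH_p^{\gamma_0+2}(\tau\wedge t)}\leq \tfrac12\,\bE\int_0^{\tau\wedge t}\|u(s,\cdot)\|^p_{H_p^{\gamma+2}}\,ds+N\,\bE\int_0^{\tau\wedge t}\|u(s,\cdot)\|^p_{H_p^{\gamma}}\,ds,
\end{equation*}
absorbs the first term into the left-hand side of \eqref{gronwall_type_ineq_condi}, and only then invokes \eqref{solution_embedding} to bound $\bE\,1_{s\leq\tau}\|u(s,\cdot)\|^p_{H_p^{\gamma}}$ by $N\|u\|^p_{\cH_p^{\gamma+2}(\tau\wedge s)}$ — which is legitimate for any $\beta<1/2$ because $\gamma+2-2\beta>\gamma$. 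After that the Gr\"onwall step is exactly as you describe. So your reduction to a time-integral inequality is the right shape, but without the interpolation-and-absorption step the argument fails for $\gamma-\gamma_0\leq 2/p$.
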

\begin{proof}

For \eqref{completeness} and \eqref{large-p-embedding}, we refer the reader \cite[Theorem 3.7]{kry1999analytic} and \cite[Theorem 7.2]{kry1999analytic}. To show \eqref{gronwall_type_ineq}, apply Lemma \ref{prop_of_bessel_space} \eqref{multi_ineq} to \eqref{gronwall_type_ineq_condi}. Then, we have
\begin{equation*}
\begin{aligned}
\|u\|_{\cH_p^{\gamma+2}(\tau\wedge t)}^p &\leq N_0 +  N_1 \| u  \|_{\bH^{\gamma_0+2}_p(\tau\wedge t)}^p \\
&\leq N_0 +  \frac{1}{2} \bE\int_0^{\tau\wedge t} \| u(s,\cdot) \|_{H^{\gamma+2}_p}^p ds + N \bE\int_0^{\tau\wedge t} \| u(s,\cdot) \|_{H^{\gamma}_p}^p ds\\
&\leq N_0 +  \frac{1}{2}  \| u \|_{\cH^{\gamma+2}_p(\tau\wedge t)}^p + N \int_0^{t} \bE 1_{s\leq \tau}\| u(s,\cdot) \|_{H^{\gamma}_p}^p ds\\
&\leq N_0 + \frac{1}{2}  \| u \|_{\cH^{\gamma+2}_p(\tau\wedge t)}^p + N \int_0^{t} \bE \sup_{r\leq \tau\wedge s}\| u(r,\cdot) \|_{H^{\gamma}_p}^p ds, \\
\end{aligned}
\end{equation*}
where $N = N(N_1,p,\gamma)$.
By removing $\frac{1}{2}  \| u \|_{\cH^{\gamma+2}_p(\tau\wedge t)}^p$ both sides and applying \eqref{solution_embedding}, we have
\begin{equation*}
\begin{aligned}
\|u\|_{\cH_p^{\gamma+2}(\tau\wedge t)}^p
&\leq 2N_0 + 2N \int_0^{t} \| u \|_{\cH^{\gamma+2}_p(\tau\wedge  s)}^p ds,
\end{aligned}
\end{equation*}
where $N = N(N_1,p,\gamma,T)$. By the Gr\"onwall's inequality, we have \eqref{modified_Gronwall}. The theorem is proved.
\qed
\end{proof}

\begin{corollary} 
\label{embedding_corollary}
Let $\tau\leq T$ be a bounded stopping time and $\kappa \in (0,1/2)$. Suppose $p\in(2,\infty)$, $\alpha,\beta\in(0,\infty)$ satisfy
\begin{equation} \label{condition_for_alpha_beta}
\frac{1}{p} < \alpha < \beta <  \frac{1}{2}\left(\frac{1}{2}-\kappa-\frac{1}{p}\right).
\end{equation}
Then, for any $\delta \in \left[0, \frac{1}{2}-\kappa-2\beta-\frac{1}{p}\right)$,
we have $u\in C^{\alpha-1/p}([0,\tau];C^{1/2-\kappa-2\beta-1/p-\delta} )$ (a.s.) and
\begin{equation} \label{sol_embedding}
\mathbb{E} |u|^p_{C^{\alpha-1/p}([0,\tau];C^{1/2-\kappa-2\beta-1/p-\delta} )}\leq N\|u\|_{\cH^{1/2-\kappa}_{p} (\tau)}^p,
\end{equation}
where $N = N(p,\alpha,\beta,\kappa,T)$. 
\end{corollary}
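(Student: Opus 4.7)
The plan is to obtain the corollary as a mechanical combination of Theorem \ref{embedding}\eqref{large-p-embedding} with the Sobolev-type embedding in Lemma \ref{prop_of_bessel_space}\eqref{sobolev-embedding}. In essence, Theorem \ref{embedding}\eqref{large-p-embedding} trades two spatial derivatives for time Hölder regularity, and Lemma \ref{prop_of_bessel_space}\eqref{sobolev-embedding} converts any remaining spatial Bessel-potential regularity (above the scaling threshold $1/p$) into pointwise spatial Hölder regularity.

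First I would match $\cH_p^{1/2-\kappa}(\tau)$ with the template $\cH_p^{\gamma+2}(\tau)$ from Theorem \ref{embedding}\eqref{large-p-embedding} by setting $\gamma+2 = 1/2-\kappa$, and verify that the hypotheses $1/p < \alpha < \beta < 1/2$ hold. The only nontrivial inequality is $\beta < 1/2$; condition \eqref{condition_for_alpha_beta} gives $\beta < \tfrac{1}{2}(1/2-\kappa-1/p) < 1/2$ since $\kappa > 0$ and $p > 2$. Applying Theorem \ref{embedding}\eqref{large-p-embedding} then yields $u \in C^{\alpha-1/p}([0,\tau]; H_p^{1/2-\kappa-2\beta})$ almost surely together with the estimate
\begin{equation*}
\bE\, |u|^{p}_{C^{\alpha-1/p}([0,\tau]; H_p^{1/2-\kappa-2\beta})} \leq N\, \|u\|^{p}_{\cH_p^{1/2-\kappa}(\tau)}.
\end{equation*}

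Next I would upgrade the spatial regularity. Setting $\gamma' := 1/2-\kappa-2\beta$, condition \eqref{condition_for_alpha_beta} guarantees $\gamma' - 1/p = 1/2-\kappa-2\beta-1/p > 0$, and in fact $\gamma' - 1/p < 1/2 < 1$. Thus Lemma \ref{prop_of_bessel_space}\eqref{sobolev-embedding} applies with $n=0$ and $\nu = 1/2-\kappa-2\beta-1/p \in (0,1)$, giving the continuous embedding $H_p^{\gamma'} \hookrightarrow \cC^{\nu}$; since $\nu < 1$, the Zygmund space coincides with the Hölder space $C^{\nu}$, and the elementary inclusion $C^{\nu} \hookrightarrow C^{\nu-\delta}$ then handles any $\delta \in [0,\nu)$ as in the statement.

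Finally, because the embedding $H_p^{\gamma'} \hookrightarrow C^{\nu-\delta}$ is a bounded linear map, time Hölder regularity is preserved under composition: the bound $|u(t) - u(s)|_{C^{\nu-\delta}(\bR)} \leq N \|u(t)-u(s)\|_{H_p^{\gamma'}}$ applied pointwise in $(t,s)\in[0,\tau]^2$ converts the previous display into
\begin{equation*}
\bE\, |u|^{p}_{C^{\alpha-1/p}([0,\tau]; C^{1/2-\kappa-2\beta-1/p-\delta})} \leq N\, \|u\|^{p}_{\cH_p^{1/2-\kappa}(\tau)},
\end{equation*}
which is \eqref{sol_embedding}. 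There is no real obstacle here; the only care needed is bookkeeping of the parameter constraints in \eqref{condition_for_alpha_beta} to ensure both Theorem \ref{embedding}\eqref{large-p-embedding} and the Sobolev embedding are simultaneously applicable with strictly positive exponents.
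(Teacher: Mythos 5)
Your proposal is correct and follows essentially the same route as the paper: Theorem \ref{embedding}\eqref{large-p-embedding} for the time regularity combined with Lemma \ref{prop_of_bessel_space}\eqref{sobolev-embedding} for the spatial H\"older embedding, with the parameter checks carried out exactly as needed. The only cosmetic difference is that the paper absorbs $\delta$ by applying the Sobolev embedding at exponent $1/2-\kappa-2\beta-\delta$ and then invoking the monotonicity of Bessel-potential norms, whereas you embed at $\delta=0$ and use the inclusion $C^{\nu}\hookrightarrow C^{\nu-\delta}$; these are interchangeable.
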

\begin{proof}
Set $\gamma = 1/2-\kappa-2\beta-\delta$. Then, by Lemma \ref{prop_of_bessel_space} \eqref{sobolev-embedding},  we have
\begin{equation} 
\label{holder_lp_embedding}
\begin{aligned}
|u(t,\cdot)|_{C^{1/2-\kappa-2\beta-1/p-\delta}}&\leq N\| u(t,\cdot) \|_{H_{p}^{1/2-\kappa-2\beta-\delta}} \leq N\| u(t,\cdot) \|_{H_{p}^{1/2-\kappa-2\beta}}
\end{aligned}
\end{equation}
for all $t \in [0,\tau]$ almost surely.
By \eqref{holder_lp_embedding} and \eqref{solution_embedding}, we have \eqref{sol_embedding}. The corollary is proved.
\qed
\end{proof}


\section{Main results} 
\label{sec:main_results}
This section provides the uniqueness, existence, $L_p$ regularity, and maximal H\"older regularity of the solution to semilinear equation
\begin{equation}
\label{burger's_eq_space_time_white_noise}
du = \left(au_{xx} + b u_{x} + cu + \bar{b} |u|^{\lambda } u_{x}\right) dt +  \sigma(u)\eta_k dw_t^k\,; \quad u(0,\cdot) = u_0(\cdot)
\end{equation}
on $(t,x)\in(0,\infty)\times\bR$ with $\lambda>0$. The coefficients $a,b,c$ are $\cP\times \cB(\bR)$-measurable, $\bar b$ is $\cP$-measurable, and $a,b,c,\bar b$ are uniformly bounded; see Assumption \ref{deterministic_part_assumption_on_coeffi_space-time_white_noise}. Notice that $\bar b$ is assumed to be independent of $x$ to control the nonlinear term $\bar b |u|^\lambda u_x$ in the estimate; see Remarks \ref{bar b is independent of x} and \ref{bar b is independent of x 2}. The set of bounded functions $\{ \eta_k : k\in\bN \}$ is an orthonormal $L_2(\bR)$ basis introduced in Remark \ref{remark:representation}. 

Based on the conditions of nonlinear terms $\bar b |u|^\lambda u_x$ and $\sigma(u)$, we separate two cases. Each case is discussed in Sections \ref{the first case} and \ref{the second case}; 
\begin{enumerate}[\text{Case} 1)]
\item $\lambda\in(0,1]$ and $\sigma(u)$ has Lipschitz continuity, linear growth, and \textit{boundedness} in $u$,

\item $\lambda\in(0,1)$ and $\sigma(u) = |u|^{1+\lambda_0}$ with $\lambda_0\in[0,1/2)$.
\end{enumerate}
Notice that the condition on $\lambda$ is changed according to the types of diffusion coefficient $\sigma(u)$. In other words, if $\sigma(u)$ is \textit{bounded} (Case 1), we consider $\lambda$ is less than or \textit{equal} to $1$; $\lambda\in(0,1]$. On the other hand, if $\sigma(u)$ is unbounded (Case 2), $\lambda$ is assumed to less than $1$; $\lambda \in (0,1)$. 

Besides, the regularity of the solution varies in each case. For example, if $\sigma(u)$ is bounded (Case 1) and $u_0\in \cap_{p>2}U_p^{1/2}\cap L_1(\Omega;L_1)$, then solution $u$ introduced in Corollary \ref{maximal holder regularity1} satisfies for $T<\infty$ and small $\ep>0$,
$$ \sup_{t\leq T}|u(t,\cdot)|_{C^{1/2-\ep}(\bR)} + \sup_{x\in\bR}|u(\cdot,x)|_{C^{1/4-\ep}([0,T])}  < \infty\quad(a.s.).
$$
On the other hand, if $\sigma(u)$ is unbounded (Case 2) and $ u_0\in \cap_{p>2}U_p^{1/2-(\lambda - 1/2)\vee \lambda_0}\cap L_1(\Omega;L_1)$, solution $u$ introduced in Corollary \ref{maximal holder regularity2} satisfies for $T<\infty$ and small $\ep>0$, 
$$\sup_{t\leq T}|u(t,\cdot)|_{C^{1/2-(\lambda - 1/2)\vee \lambda_0-\ep}(\bR)} + \sup_{x\in\bR}|u(\cdot,x)|_{C^{\frac{1/2-(\lambda - 1/2)\vee \lambda_0}{2}-\ep}([0,T])}  < \infty\quad (a.s.).
$$
Notice that the solution regularity independent of $\lambda$ if $\sigma(u)$ is bounded. On the contrary, the regularity of solution depends on $\lambda$ and $\lambda_0$ if $\sigma(u)$ is super-linear.

\vspace{1mm}

\subsection{ \it The first case: generalized Burgers' equation with the bounded Lipschitz diffusion coefficient \texorpdfstring{$\sigma(u)$}{Lg}}
\label{the first case}

This section contains results of equation \eqref{burger's_eq_space_time_white_noise} with $\lambda\in(0,1]$. The diffusion coefficient $\sigma(u)$ has Lipschitz continuity, linear growth, and \textit{boundedness} in $u$; see Assumption \ref{stochastic_part_assumption_on_coeffi_space_time_white_noise_large_lambda}. The $L_p$-solvability of equation \eqref{burger's_eq_space_time_white_noise} is provided in Theorem \ref{theorem_burgers_eq_large_lambda_STWN}. Also, we obtain the H\"older regularity of the solution by applying the H\"older embedding theorem for $\cH_p^{\gamma+2}(\tau)$ (Corollary \ref{embedding_corollary}). Furthermore, by employing the uniqueness of the solution in $p$ (Theorem \ref{uniqueness_in_p_1}), we achieve the maximal H\"older regularity of the solution; see Corollary \ref{maximal holder regularity1}.

\vspace{1mm}

The following are assumptions on coefficients to equation \eqref{burger's_eq_space_time_white_noise}. 
Note that $\sigma(u)$ is bounded in Assumption \ref{stochastic_part_assumption_on_coeffi_space_time_white_noise_large_lambda}.

\begin{assumption} 
\label{deterministic_part_assumption_on_coeffi_space-time_white_noise}
\begin{enumerate}[(i)]

\item 
The coefficients $a = a(t,x)$, $b = b(t,x)$, and $c = c(t,x)$ are $\cP\times\cB(\bR)$-measurable.

\item
The coefficient $\bar{b} = \bar{b}(t)$ is predictable.

\item 
There exists $K>0$ such that 
\begin{equation} 
\label{ellipticity_of_leading_coefficients_space-time_white_noise}
K^{-1} \leq a(t,x) \leq  K\quad \text{for all}\quad (\omega,t,x)\in\Omega\times[0,\infty)\times\bR,
\end{equation}
and
\begin{equation} 
\label{boundedness_of_deterministic_coefficients_space-time_white_noise} 
\left| a(t,\cdot) \right|_{C^{2}(\bR)} + \left| b(t,\cdot) \right|_{C^{2}(\bR)} + |c(t,\cdot)|_{C^{2}(\bR)} + \left| \bar{b}(t) \right| \leq K
\end{equation}
for all $(\omega,t)\in\Omega\times[0,\infty)$.
\end{enumerate}

\end{assumption}

\begin{assumption} 
\label{stochastic_part_assumption_on_coeffi_space_time_white_noise_large_lambda}
\begin{enumerate}[(i)]
\item
The function $\sigma(t,x,u)$ is $\cP\times\cB(\bR)\times\cB(\bR)$-measurable.

\item
There exists $K>0$ such that for $\omega\in\Omega$, $t>0$, $x,u,v\in\bR$,
\begin{equation}
\label{boundedness_of_stochastic_coefficients_large_lambda_STWN}
|\sigma(t,x,u) - \sigma(t,x,v)| \leq K|u-v|\quad\text{and}\quad |\sigma(t,x,u)| \leq K(|u|\wedge1).
\end{equation}

\end{enumerate}
\end{assumption}

Now we provide the $L_p$-solvability of equation \eqref{burger's_eq_space_time_white_noise}.

\begin{theorem} 
\label{theorem_burgers_eq_large_lambda_STWN}
Let $\lambda\in(0,1]$. Assume $p>2$ and $\kappa\in(0,1/2)$ satisfy
$$ p-1+\lambda\in2\bN\quad\text{and}\quad p > \frac{6}{1-2\kappa}.
$$ 
Suppose Assumptions \ref{deterministic_part_assumption_on_coeffi_space-time_white_noise} and \ref{stochastic_part_assumption_on_coeffi_space_time_white_noise_large_lambda} hold. For the nonnegative initial data $u_0\in U_p^{1/2-\kappa}\cap L_{1}(\Omega;L_1)$, equation 
\eqref{burger's_eq_space_time_white_noise}
has a unique nonnegative solution $u$ in $\cH_{p,loc}^{1/2-\kappa}$. 
Furthermore, if $\alpha$ and $\beta$ satisfy \eqref{condition_for_alpha_beta}, then for any $T\in (0,\infty)$ and small $\ep>0$, almost surely 
\begin{equation} 
\label{holder_regularity_large_lambda_STWN}
\|u\|^p_{C^{\alpha-1/p}([0,T];C^{1/2-\kappa-2\beta-1/p}(\bR) )} < \infty.
\end{equation}
\end{theorem}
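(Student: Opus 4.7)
The plan is a standard truncation plus a priori estimate plus stopping-time argument. Fix a smooth cutoff $\phi_n\in C_c^\infty(\bR)$ with $\phi_n\equiv 1$ on $[-n,n]$ and $|\phi_n|\le 1$, and replace the super-linear drift $\bar b|u|^\lambda u_x$ in \eqref{burger's_eq_space_time_white_noise} by $\bar b\,\phi_n(u)|u|^\lambda u_x$ (smoothing $|\cdot|^\lambda$ near $0$ if needed). Since $u\mapsto\phi_n(u)|u|^\lambda$ is now bounded Lipschitz and $\sigma$ is bounded Lipschitz by Assumption \ref{stochastic_part_assumption_on_coeffi_space_time_white_noise_large_lambda}, Krylov's linear $L_p$-theory for semilinear SPDEs \cite{kry1999analytic} produces a unique truncated solution $u_n\in\cH_p^{1/2-\kappa}(T)$ for each $T<\infty$. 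Non-negativity $u_n\ge 0$ follows from $\sigma(t,x,0)=0$, granted by \eqref{boundedness_of_stochastic_coefficients_large_lambda_STWN}, via a maximum-principle argument for the truncated equation.

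The core step is an $n$-uniform estimate on $\|u_n\|_{\cH_p^{1/2-\kappa}(\tau)}$. First, test the truncated equation against a smooth approximation of $1$: the nonlinear drift is the $x$-derivative of $\tfrac{\bar b}{1+\lambda}\Phi_n(u_n)$ with $\Phi_n'(u)=(1+\lambda)\phi_n(u)|u|^\lambda$ and integrates to zero, while the $L_\infty$ bound on $\sigma$ together with $u_0\in L_1(\Omega;L_1)$ yields the uniform bound $\sup_n\bE\sup_{t\le T}\int_\bR u_n(t,x)\,dx<\infty$. Second, split $u_n=v_n+w_n$, where $v_n$ carries the stochastic noise together with the linear operator (with initial data $u_0$), and $w_n$ is a pathwise solution of a PDE carrying the nonlinear drift with zero initial value. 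The stochastic $L_p$-theory applied to $v_n$ gives a uniform $\cH_p^{1/2-\kappa}$ bound since $\|\sigma(u_n)\|_\infty\le K$. For $w_n$, apply the chain rule / It\^o's formula to $w_n^{p-1+\lambda}$ (smooth thanks to $p-1+\lambda\in 2\bN$), integrate by parts in the divergence-form drift, and use the uniform $L_1$ bound to dominate the surplus factor $|u_n|^\lambda$; this domination is viable precisely because $\lambda\le 1$, so an elementary interpolation controls $|u|^\lambda$ by a combination of $1$ and $|u|$. Lower-order linear terms are absorbed by Lemma \ref{prop_of_bessel_space}(\ref{multi_ineq}), and the estimate is closed by Theorem \ref{embedding}(\ref{gronwall_type_ineq}).

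Equipped with the $n$-uniform bound, define $\tau_n:=\inf\{t\ge 0:\sup_x|u_n(t,x)|\ge n\}\wedge T$. The hypothesis $p>6/(1-2\kappa)$ makes \eqref{condition_for_alpha_beta} solvable, so Corollary \ref{embedding_corollary} embeds $\cH_p^{1/2-\kappa}(\tau)$ into a space of $(t,x)$-continuous functions, making $\tau_n$ a well-defined stopping time; the uniform bound then forces $\tau_n\uparrow\infty$ a.s. On $[0,\tau_n]$ the truncation is invisible, so uniqueness for the truncated problem gives $u_m=u_n$ on $[0,\tau_n]$ for $m\ge n$; gluing produces $u\in\cH_{p,loc}^{1/2-\kappa}$ solving \eqref{burger's_eq_space_time_white_noise}, and uniqueness for the original equation follows by the same stopping-time reduction applied to two candidate solutions. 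Finally, \eqref{holder_regularity_large_lambda_STWN} is a direct application of Corollary \ref{embedding_corollary} to $u\in\cH_p^{1/2-\kappa}(\tau_n)$, letting $n\to\infty$.

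The principal obstacle is the $n$-uniform $\cH_p^{1/2-\kappa}$ estimate of the second paragraph; everything else is standard once it is in hand. The $v_n/w_n$ splitting is what makes the pathwise chain-rule identity $|u|^\lambda u_x=(|u|^{1+\lambda})_x/(1+\lambda)$ usable inside the It\^o-type $L_p$ estimate, and the interplay of this identity with the uniform $L_1$ bound and the arithmetic constraints $\lambda\le 1$ and $p-1+\lambda\in 2\bN$ is what absorbs the super-linear drift.
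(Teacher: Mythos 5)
Your proposal follows essentially the same route as the paper: truncate the convective term, prove nonnegativity and an $L_1$ bound, split the local solution into a noise part $v$ and a pathwise nonlinear part $w=u_m-v$, kill the drift term in the $L_p$ chain-rule identity using $p-1+\lambda\in2\bN$, close with Gr\"onwall, and pass to the global solution by stopping times and the embedding of Corollary \ref{embedding_corollary}. One imprecision: on the unbounded domain $\bR$, the bound $\|\sigma(u_n)\|_\infty\le K$ alone does not place $\sigma(u_n)\eta$ in $\bH_p^{-1/2-\kappa}(T,\ell_2)$ (a bounded function need not lie in $L_p(\bR)$); as in the paper's Lemma \ref{noise_cancelling_lemma}, you must combine $|\sigma(u)|\le K(|u|\wedge 1)$ with the uniform $L_1$ bound of $u_n$ to obtain the integrability of $\sigma(u_n)\eta$ over $\bR$ --- an ingredient you already have, so the gap is one of justification rather than of substance.
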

\begin{proof}
See {\bf{Proof of Theorem \ref{theorem_burgers_eq_large_lambda_STWN}}} in Section \ref{Proof of the first case}.
\qed
\end{proof}

\begin{remark}
\label{bar b is independent of x}
The coefficient $\bar b$ is assumed to be independent of $x$, and it is a sufficient condition for the existence of a global solution. In fact, we control the nonlinear term $|u|^\lambda u_x$ with the bound of $\sigma(u)$ and the initial data $u_0$ to show the local existence time blows up. Thus, in the $L_p$ estimate, the chain rule and fundamental theorem of calculus are employed to apply Gr\"onwall's inequality. Here, the coefficient $\bar b$ should be taken out of the integral with respect to $x$ to use the fundamental theorem of calculus. Thus, $\bar b$ is assumed to be independent of $x$; see Lemma \ref{Lq_bound_2}.

\end{remark}

\begin{remark}
\begin{enumerate}[(i)]
\item There exist $\alpha$ and $\beta$ satisfying \eqref{condition_for_alpha_beta} since $p > \frac{6}{1-2\kappa}$.

\item The summability $p$ is assumed to satisfy $p-1+\lambda\in2\bN$ to employ the fundamental theorem of calculus; see \eqref {bound_of_nonlienar_term_STWN_1} of Lemma \ref{Lq_bound_2}.

\item The condition on the initial data $u_0\in L_1(\Omega;L_1)$ is assumed to obtain $L_1$ bound of the solution.
\end{enumerate}
\end{remark}

\begin{remark}

The regularity of the solution is independent of $\lambda$. Indeed, the nonlinear term $|u|^\lambda u_x$ does not affect the regularity of the solution when we extend the local solutions to a global one. More precisely, we prove that the regularity of the local solution is independent of $\lambda$ and the local solution is non-explosive. To prove non-explosive property, it suffices to show that there is a uniform $L_p$ bounded of the local solutions since we employ Krylov's $L_p$ theory; see \cite[Section 8]{kry1999analytic}. During the proof, we separate the local solution into two parts to obtain the uniform $L_p$ bound of the local solution; noise-related part and nonlinear-related part. In Lemma \ref{noise_cancelling_lemma}, we show that the regularity of the noise-related part is mainly affected by space-time white noise and independent of $\lambda$. Also, it turns out that for any $\lambda\in(0,1]$, the $L_p$ bound of nonlinear-related part is dominated by noise-related part; see Lemma \ref{Lq_bound_2}. Thus, we achieve a uniform $L_p$ bound of the local solutions independent of $\lambda$.

\end{remark}

\begin{remark}
The H\"older regularity of the solution introduced in Theorem \ref{theorem_burgers_eq_large_lambda_STWN} depends on $\alpha$ and $\beta$. For example, for small $\ep > 0$, set $\alpha = \frac{1}{p}+\frac{\ep}{4}$ and $\beta = \frac{1}{p}+\frac{\ep}{2}$. Then, we have
\begin{equation*}
\sup_{t\leq T}|u(t,\cdot)|_{C^{\frac{1}{2}-\kappa-\frac{3}{p}-\ep}(\bR)} < \infty\quad(a.s.).
\end{equation*}
Similarly, for small $\ep > 0$, let $\alpha = \frac{1}{2}\left(\frac{1}{2}-\kappa-\frac{1}{p}\right)-\ep$ and $\beta = \frac{1}{2}\left(\frac{1}{2}-\kappa-\frac{1}{p}\right)-\frac{\ep}{2}$. Then, we have
\begin{equation*}
\sup_{x\in\bR}|u(\cdot,x)|_{C^{\frac{1}{2}-\kappa-\frac{3}{2p}-\ep}([0,T])}  < \infty\quad (a.s.).
\end{equation*}

\end{remark}

To obtain the maximal H\"older regularity of the solution, the parameter $p$ should be large.
The following theorem provides the uniqueness of the solution in $p$.

\begin{theorem}
\label{uniqueness_in_p_1}
Assume that all the conditions of Theorem \ref{theorem_burgers_eq_large_lambda_STWN} holds. Let $u\in \cH_{p,loc}^{1/2-\kappa}$ be the solution of equation \eqref{burger's_eq_space_time_white_noise} introduced in Theorem \ref{theorem_burgers_eq_large_lambda_STWN}. If $q>p$ and $u_0\in U_{q}^{1/2-\kappa}\cap L_{1}(\Omega;L_{1})$, then $u\in \cH_{q,loc}^{1/2-\kappa}$.
\end{theorem}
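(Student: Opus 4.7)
The plan is to bootstrap the $L_p$-integrability of $u$ to $L_q$ by regarding the equation on a localizing stopping time as a linear SPDE whose forcing and noise coefficient depend only on the known $u$, then invoking the linear $L_q$-theory and identifying the two solutions.

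Since $u\in\cH_{p,loc}^{1/2-\kappa}$, fix a sequence of bounded stopping times $\tau_n\uparrow\infty$ with $u\in\cH_p^{1/2-\kappa}(\tau_n)$. The H\"older embedding of Corollary \ref{embedding_corollary} (applicable as $p>6/(1-2\kappa)$) makes $u$ almost surely H\"older continuous on $[0,\tau_n]\times\bR$, so introducing the further stopping
\begin{equation*}
\sigma_n:=\tau_n\wedge\inf\Bigl\{t\ge 0:\ \sup_{s\le t}\bigl(|u(s,\cdot)|_{L_\infty(\bR)}+|u(s,\cdot)|_{L_1(\bR)}\bigr)\ge n\Bigr\}
\end{equation*}
yields a stopping time with $\sigma_n\uparrow\infty$ almost surely, on which $u$ is uniformly bounded in $L_r(\bR)$ for every $r\in[1,\infty]$ by $L_1$--$L_\infty$ interpolation. (The $L_1$-control is inherited from the $L_1(\Omega;L_1)$ assumption on $u_0$ together with the uniform $L_1$ bound of the solution implicit in the existence proof.)

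On $[0,\sigma_n]$ one then views $u$ as a solution of the linear equation
\begin{equation*}
dv=\bigl[av_{xx}+bv_x+cv\bigr]dt+h\,dt+\phi^k\,dw_t^k,\qquad v(0,\cdot)=u_0,
\end{equation*}
with forcing $h:=\bar b|u|^{\lambda}u_x=\frac{\bar b}{1+\lambda}(u^{1+\lambda})_x$ (using $u\ge 0$) and noise coefficient $\phi^k:=\sigma(u)\eta_k$, both regarded as known functions of $(\omega,t,x)$. The technical heart is to verify $h\in\bH_q^{-3/2-\kappa}(\sigma_n)$ and $\phi\in\bH_q^{-1/2-\kappa}(\sigma_n,\ell_2)$. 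For $h$, the pointwise bound $|u|\le n$ combined with the uniform $L_r$-bound gives $u^{1+\lambda}\in\bL_q(\sigma_n)\hookrightarrow\bH_q^{-1/2-\kappa}(\sigma_n)$ via Lemma \ref{prop_of_bessel_space} \eqref{norm_bounded}, so $h\in\bH_q^{-3/2-\kappa}(\sigma_n)$ by Lemma \ref{prop_of_bessel_space} \eqref{bounded_operator} and the pointwise multiplier estimate. For $\phi$, the uniform bound $|\sigma(u)|\le K$ from Assumption \ref{stochastic_part_assumption_on_coeffi_space_time_white_noise_large_lambda} combined with the standard one-dimensional space-time white noise estimate, which controls $\||(1-\Delta)^{-(1/2+\kappa)/2}(f\eta_k)|_{\ell_2}\|_{L_q(\bR)}$ by $\|f\|_{L_q(\bR)}$, yields the required bound. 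The linear $L_q$-theory (see, e.g., \cite[Theorem 5.1]{kry1999analytic}) then supplies a unique $v\in\cH_q^{1/2-\kappa}(\sigma_n)$ with the given $u_0$.

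To conclude $u=v$, observe that both $u$ and $v$ satisfy the same linear SPDE in the distributional sense with the same $u_0$, $h$, $\phi^k$; the stochastic integrals cancel upon subtraction and $w:=u-v$ solves the homogeneous deterministic parabolic equation $\partial_t w=aw_{xx}+bw_x+cw$ pathwise with $w(0,\cdot)=0$. Pathwise uniqueness for this second-order linear parabolic equation with bounded measurable coefficients forces $w\equiv 0$ on $[0,\sigma_n]$, so $u\in\cH_q^{1/2-\kappa}(\sigma_n)$; letting $n\to\infty$ gives $u\in\cH_{q,loc}^{1/2-\kappa}$. The principal obstacle is the $\phi$-estimate, since $\sum_k|\sigma(u)\eta_k|^2$ diverges pointwise and one must exploit the smoothing of the negative-order Bessel potential together with the uniform bound on $\sigma(u)$ to close the bound; this is the same space-time white noise machinery already used in the existence proof of Theorem \ref{theorem_burgers_eq_large_lambda_STWN}, and the crucial point is that it does not require the integrality condition $q-1+\lambda\in 2\bN$, which was specific to the nonlinear existence argument.
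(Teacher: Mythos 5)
Your proposal follows essentially the same route as the paper: localize so that $u$ is uniformly bounded, freeze the nonlinearity and the noise coefficient as known data, verify they lie in $\bH_q^{-3/2-\kappa}$ and $\bH_q^{-1/2-\kappa}(\ell_2)$ respectively, invoke the linear $L_q$-theory, and identify the two solutions through the homogeneous deterministic parabolic equation satisfied by their difference. Your observation that the integrality condition $q-1+\lambda\in 2\bN$ is not needed at this stage is correct and is in fact the point of the argument.

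One ingredient of your localization is both unjustified and unnecessary: you stop when $\sup_{s\le t}\|u(s,\cdot)\|_{L_1}\ge n$ and assert that a uniform-in-time $L_1$ bound for $u$ is ``implicit in the existence proof,'' but in Case 1 the paper only establishes a \emph{time-integrated} $L_1$ bound for the local solutions (Lemma \ref{L1_bound}); the sup-in-time $L_1$ estimate is proved only for the noise part $v$ (Lemma \ref{noise_cancelling_lemma}), not for $u$ itself, so the a.s.\ finiteness of $\sup_{s\le t}\|u(s,\cdot)\|_{L_1}$ — and hence that your $\sigma_n\uparrow\infty$ — is not available. Fortunately this plays no role: since $q(1+\lambda)>p$, the $L_\infty$ cutoff together with $u\in\bL_p(\tau_n)$ already gives $u^{1+\lambda}\in\bL_q$ by interpolating between $L_p$ and $L_\infty$ (this is exactly the paper's estimate $\bE\int\int|u|^{p_0}\le m^{p_0-p}\bE\int\int|u|^p$), so you should simply drop the $L_1$ component of the stopping time. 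With that correction the argument is sound and matches the paper's proof.
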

\begin{proof}
See {\bf{Proof of Theorem \ref{uniqueness_in_p_1}}} in Section \ref{Proof of the first case}.
\qed
\end{proof}

By combining Theorems \ref{theorem_burgers_eq_large_lambda_STWN} and \ref{uniqueness_in_p_1}, we obtain the maximal H\"older regularity of the solution.

\begin{corollary}
\label{maximal holder regularity1}
Suppose $u_0\in U_p^{1/2}\cap L_1(\Omega;L_1)$ for all $p>2$. Then, for $T>0$ and small $\ep>0$, we have
\begin{equation*}
\sup_{t\leq T}|u(t,\cdot)|_{C^{1/2-\ep}(\bR)} + \sup_{x\in\bR}|u(\cdot,x)|_{C^{1/4-\ep}([0,T])}  < \infty
\end{equation*}
almost surely.

\end{corollary}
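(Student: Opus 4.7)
The plan is to extract both Hölder exponents from a single solution $u$ by invoking the embedding \eqref{holder_regularity_large_lambda_STWN} twice with two different parameter tunings, using Theorem \ref{uniqueness_in_p_1} to ensure that the same $u$ sits in the required class $\cH_{q,loc}^{1/2-\kappa}$ for every admissible $q$.

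Fix $\ep > 0$ small. Since $U_p^{1/2} \hookrightarrow U_p^{1/2-\kappa}$ for any $\kappa \in (0,1/2)$ by Lemma \ref{prop_of_bessel_space} \eqref{norm_bounded}, the hypothesis on $u_0$ gives $u_0 \in U_q^{1/2-\kappa} \cap L_1(\Omega;L_1)$ for every $q > 2$. I would fix $\kappa = \ep/8$, apply Theorem \ref{theorem_burgers_eq_large_lambda_STWN} to obtain a unique $u \in \cH_{p_0,loc}^{1/2-\kappa}$ at the smallest admissible $p_0$ (satisfying $p_0 - 1 + \lambda \in 2\bN$ and $p_0 > 6/(1-2\kappa)$), and then use Theorem \ref{uniqueness_in_p_1} to upgrade to $u \in \cH_{q,loc}^{1/2-\kappa}$ for every admissible $q > p_0$. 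This frees me to take $q$ arbitrarily large within the progression $\{q : q - 1 + \lambda \in 2\bN\}$.

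To reach the spatial exponent $1/2 - \ep$, I would take $q$ large and choose $\beta_1$ slightly above $1/q$ (with any $\alpha_1 \in (1/q, \beta_1)$) so that
\begin{equation*}
\kappa + 2\beta_1 + 1/q < \ep, \qquad \beta_1 < \tfrac{1}{2}\bigl(\tfrac{1}{2} - \kappa - \tfrac{1}{q}\bigr);
\end{equation*}
both can be arranged for $q$ sufficiently large (e.g.\ $\beta_1 = 1/q + \ep/8$), and then \eqref{holder_regularity_large_lambda_STWN} yields $\sup_{t\le T}|u(t,\cdot)|_{C^{1/2-\ep}(\bR)} < \infty$ almost surely. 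To reach the temporal exponent $1/4 - \ep$, I would instead push $\alpha_2, \beta_2$ close to the upper ceiling $\tfrac{1}{2}(\tfrac{1}{2} - \kappa - 1/q)$: with $q$ large enough that $\kappa/2 + 3/(2q) < \ep$, I can pick $\beta_2$ just below the ceiling and $\alpha_2$ just below $\beta_2$ so that $\alpha_2 - 1/q > 1/4 - \ep$ while still satisfying \eqref{condition_for_alpha_beta}, giving $\sup_{x\in\bR}|u(\cdot,x)|_{C^{1/4-\ep}([0,T])} < \infty$ almost surely.

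I do not expect any genuine obstacle: the only mild nuisance is that the admissibility condition $q - 1 + \lambda \in 2\bN$ restricts $q$ to a discrete set, but this set is an unbounded arithmetic progression, so $q$ can always be taken as large as needed. The rest is parameter bookkeeping once Theorem \ref{uniqueness_in_p_1} identifies the solutions across different $q$'s.
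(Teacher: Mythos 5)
Your proposal is correct and follows essentially the same route as the paper: obtain the solution for one admissible $p$ via Theorem \ref{theorem_burgers_eq_large_lambda_STWN}, identify the solutions across exponents via Theorem \ref{uniqueness_in_p_1}, and then apply Corollary \ref{embedding_corollary} twice, once with $\beta$ near $1/p$ for the spatial exponent and once with $\alpha,\beta$ near the ceiling $\tfrac12(\tfrac12-\kappa-\tfrac1p)$ for the temporal exponent (the paper just makes the explicit choices $p=12/\ep$ and $p=12/(5\ep)$ with $\kappa=\ep/4$). Your worry about the parity constraint $q-1+\lambda\in2\bN$ is unnecessary for the upgrade step, since Theorem \ref{uniqueness_in_p_1} imposes no such condition on $q$, but restricting to the arithmetic progression as you do is harmless.
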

\begin{proof}

Let $T\in(0,\infty)$ and small $\ep>0$. For each $p > \frac{6}{1-\ep/2}$, Theorem \ref{theorem_burgers_eq_large_lambda_STWN} with $\kappa = \ep/4$ yields that there exists a unique solution $u = u_p\in \cH_p^{1/2-\ep/4}(T)$ to equation \eqref{burger's_eq_space_time_white_noise}. Since $u_0 \in U_p^{1/2-\ep/4}$ for all $p > 2$, Theorem \ref{uniqueness_in_p_1} implies that all the solutions $u = u_p$ coincides. Then, by letting $\delta = 0$, $\kappa = \frac{\ep}{4}$, $p = \frac{12}{\ep}$, $\alpha = \frac{1}{p} + \frac{\ep}{8}$, and $\beta = \frac{1}{p} + \frac{\ep}{4}$ in Corollary \ref{embedding_corollary}, almost surely 
\begin{equation*}
\sup_{t\leq T} |u(t,\cdot)|_{C^{\frac{1}{2}-\ep}(\bR)}^p \leq |u|_{C^{\alpha-\frac{1}{p}}\left([0,T];C^{\frac{1}{2}-\kappa-2\beta-\frac{1}{p}}(\bR)\right)}^p<\infty.
\end{equation*}
On the other hand, set $\delta = 0$, $\kappa = \frac{\ep}{4}$, $p = \frac{12}{5\ep}$, $\alpha = \frac{1}{2}\left( \frac{1}{2}-\kappa-\frac{1}{p} \right) - \frac{\ep}{4}$, and $\beta = \frac{1}{2}\left( \frac{1}{2}-\kappa-\frac{1}{p} \right) - \frac{\ep}{2}$ in Corollary \ref{embedding_corollary}. Then, almost surely 
\begin{equation*}
\sup_{x\in \bR} |u(\cdot,x)|_{C^{\frac{1}{4} - \ep}([0,T])}\leq |u|_{C^{\alpha-\frac{1}{p}}\left([0,T];C^{\frac{1}{4}-\frac{\kappa}{2}-2\beta-\frac{1}{p}}(\bR)\right)}^p<\infty.
\end{equation*}
The corollary is proved.

\qed
\end{proof}

\subsection{\it The second case: modified Burgers' equation with the super-linear diffusion coefficient \texorpdfstring{$\sigma(u)$}{Lg}}
\label{the second case}

In this section, we consider
\begin{equation}
\label{burger's_eq_space_time_white_noise_super_linear}
du = \left(au_{xx} + b u_{x} + cu + \bar{b} |u|^{\lambda } u_{x}\right) dt +  \mu|u|^{1+\lambda_0}\eta_k dw_t^k; \quad u(0,\cdot) = u_0(\cdot),
\end{equation}
on $(t,x)\in(0,\infty)\times\bR$ with $\lambda\in(0,1)$ and $\lambda_0\in[0,1/2)$. Theorem \ref{theorem_burgers_eq_STWN_super_linear} contains the uniqueness, existence, and regularity of the solution to equation \eqref{burger's_eq_space_time_white_noise_super_linear}. Besides, the H\"older regularity of the solution follows from the H\"older embedding theorem for $\cH_p^{\gamma+2}(\tau)$ (Corollary \ref{embedding_corollary}). Similarly to Section \ref{the first case}, by considering large $p$, we obtain the maximal H\"older regularity of the solution; see Corollary \ref{maximal holder regularity2}.

\vspace{1mm}

Below is the assumption for the coefficient $\mu$ of \eqref{burger's_eq_space_time_white_noise_super_linear}.

\begin{assumption} 
\label{stochastic_part_assumption_on_coeffi_space_time_white_noise_small_lambda}
\begin{enumerate}[(i)]

\item 
The coefficient $\mu(t,x)$ is $\cP\times\cB(\bR)$-measurable.

\item There exists $K>0$ such that for $\omega\in\Omega, t>0, x\in\bR$,
\begin{equation}
\label{boundedness_of_stochastic_coefficients_small_lambda_STWN}
|\mu(t,\cdot)|_{C(\bR)} \leq K.
\end{equation}
\end{enumerate}
\end{assumption}

We introduce the $L_p$-solvability results on equation \eqref{burger's_eq_space_time_white_noise_super_linear}.

\begin{theorem}
\label{theorem_burgers_eq_STWN_super_linear}

Let $\lambda \in(0,1)$, $\lambda_0\in[0,1/2)$, $1/2 > \kappa > (\lambda -1/2)\vee\lambda_0$, and $p > \frac{6}{1-2\kappa}$. Suppose Assumptions \ref{deterministic_part_assumption_on_coeffi_space-time_white_noise} and \ref{stochastic_part_assumption_on_coeffi_space_time_white_noise_small_lambda} hold. For nonnegative initial data $u_0\in U_p^{1/2-\kappa}\cap L_{1}(\Omega;L_1)$, equation \eqref{burger's_eq_space_time_white_noise_super_linear}
has a unique nonnegative solution $u$ in $\cH_{p,loc}^{1/2-\kappa}$. Furthermore, if $\alpha$ and $\beta$ satisfy \eqref{condition_for_alpha_beta}, then for any $T\in (0,\infty)$ and small $\ep>0$, almost surely 
\begin{equation} 
\label{holder_regularity_large_lambda_STWN_2}
\|u\|^p_{C^{\alpha-1/p}([0,T];C^{1/2-\kappa-2\beta-1/p}(\bR) )} < \infty.
\end{equation}
\end{theorem}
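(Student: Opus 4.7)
The plan is to mirror the scheme used for Theorem \ref{theorem_burgers_eq_large_lambda_STWN} in Section \ref{Proof of the first case}, with modifications that accommodate the super-linear diffusion $\sigma(u)=\mu|u|^{1+\lambda_0}$. First, I would truncate the nonlinearities: pick a smooth cutoff $\psi_n$ supported in $[-2n,2n]$ with $\psi_n\equiv 1$ on $[-n,n]$, and replace $|u|^{\lambda}u_x$ by $\psi_n(|u|)|u|^{\lambda}u_x$ and $\mu|u|^{1+\lambda_0}$ by $\mu\psi_n(|u|)|u|^{1+\lambda_0}$. The resulting coefficients are bounded and Lipschitz in $u$, so Krylov's $L_p$-theory (\cite[Section 8]{kry1999analytic}) delivers unique solutions $u_n\in\cH_p^{1/2-\kappa}(T)$, and the nonnegativity of $u_n$ follows from a standard comparison argument using $u_0\ge 0$. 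With the stopping times $\tau_n:=\inf\{t\ge 0:|u_n(t,\cdot)|_{C(\bR)}>n\}$, on $\opar 0,\tau_n\cbrk$ the truncated equation coincides with \eqref{burger's_eq_space_time_white_noise_super_linear}, so one obtains a local solution up to $\tau:=\lim_n\tau_n$.

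The heart of the argument is to show $\tau=\infty$ almost surely by a uniform $L_p$-bound on $\{u_n\}$. Following the strategy indicated in the introduction, I would split $u_n=v_n+w_n$ where $v_n$ absorbs the noise,
\begin{equation*}
dv_n=(av_{n,xx}+bv_{n,x}+cv_n)\,dt+\mu|u_n|^{1+\lambda_0}\eta_k\,dw_t^k,\qquad v_n(0,\cdot)=u_0,
\end{equation*}
and $w_n=u_n-v_n$ satisfies the pathwise parabolic equation
\begin{equation*}
w_{n,t}=aw_{n,xx}+bw_{n,x}+cw_n+\bar b|u_n|^{\lambda}u_{n,x},\qquad w_n(0,\cdot)=0.
\end{equation*}
A uniform $L_1(\Omega;L_1)$ bound for $u_n$ is obtained by testing against $1$, using the nonnegativity of $u_n$ and the conservative form $|u_n|^{\lambda}u_{n,x}=\tfrac{1}{1+\lambda}(|u_n|^{\lambda}\cdot|u_n|)_x$, so that the drift contribution vanishes upon integration in $x$ (which is precisely why $\bar b$ must be independent of $x$).

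To close the estimate on $v_n$, I would apply a noise-cancellation estimate of the type used in Lemma \ref{noise_cancelling_lemma}, reducing the $\cH_p^{1/2-\kappa}$-norm of $v_n$ to the $\bL_p$-norm of $\mu|u_n|^{1+\lambda_0}$. Writing $|u_n|^{1+\lambda_0}=|u_n|^{\lambda_0}\cdot|u_n|$, the factor $|u_n|^{\lambda_0}$ is controlled via the Hölder embedding of Corollary \ref{embedding_corollary} (which needs $p>6/(1-2\kappa)$ and forces $\kappa>\lambda_0$, since one gains roughly $1/2-\kappa$ derivatives and must dominate $\lambda_0$), and the factor $|u_n|$ is absorbed into the $L_1$-bound; the surplus power is summable because $2s_0:=1/\lambda_0>2$, i.e.\ $\lambda_0<1/2$. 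An analogous energy/maximum-principle estimate for $w_n$ with the conservative form of $|u_n|^{\lambda}u_{n,x}$ reduces the contribution of the nonlinear drift to $|u_n|^{\lambda}\cdot|u_n|$, again split via Hölder embedding (requiring $\kappa>\lambda-1/2$) and the $L_1$-bound, where summability needs $s:=1/\lambda>1$, i.e.\ $\lambda<1$. Combining the two gives an inequality
\begin{equation*}
\|u_n\|_{\cH_p^{1/2-\kappa}(\tau_n\wedge t)}^{p}\le N_0+N_1\|u_n\|_{\bH_p^{\gamma_0+2}(\tau_n\wedge t)}^{p}
\end{equation*}
with some $\gamma_0<-3/2-\kappa$, to which Theorem \ref{embedding}\eqref{gronwall_type_ineq} applies, yielding a bound uniform in $n$. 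Letting $n\to\infty$ produces the global solution $u\in\cH_{p,loc}^{1/2-\kappa}$, and uniqueness follows from running the same energy argument on the difference of two candidate solutions up to a stopping time that keeps both bounded. The Hölder regularity \eqref{holder_regularity_large_lambda_STWN_2} is then an immediate consequence of Corollary \ref{embedding_corollary}.

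The principal obstacle, absent from the bounded-$\sigma$ case, is that the noise part $v_n$ now depends on $u_n$ through $|u_n|^{1+\lambda_0}$, so the two a priori estimates on $v_n$ and $w_n$ must be coupled and closed simultaneously against the uniform $L_1$-bound. This is the reason both $\lambda<1$ and $\lambda_0<1/2$ are imposed: each algebraic condition ensures the corresponding surplus power is $L^1$-integrable after the Hölder split, and the lower bound $\kappa>(\lambda-1/2)\vee\lambda_0$ is precisely what is needed for the embedding of $\cH_p^{1/2-\kappa}$ into a Hölder space to absorb both surplus factors.
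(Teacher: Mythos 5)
Your overall architecture --- truncation, an $L_1$ bound exploiting the $x$-independence of $\bar b$, absorption of the surplus powers $|u|^{\lambda}$ and $|u|^{\lambda_0}$ into that bound, Gr\"onwall, Chebyshev, and the embedding --- is the paper's, and your identification of where $\lambda<1$, $\lambda_0<1/2$ and $\kappa>(\lambda-1/2)\vee\lambda_0$ enter is essentially right. But the specific mechanism you give for the crucial step does not work. You propose to control the factors $|u_n|^{\lambda_0}$ and $|u_n|^{\lambda}$ ``via the H\"older embedding of Corollary \ref{embedding_corollary}''. That is circular: the embedding bounds $\sup_{t,x}|u_n|$ by the $\cH_p^{1/2-\kappa}$ norm you are in the middle of estimating, so inserting $(\sup|u_n|)^{\lambda_0}$ on the right produces a superlinear self-bound that Gr\"onwall cannot close, and the only stopping times that would tame it are the sup-norm times $\tau_n^n$ whose non-accumulation is the very thing to be proved. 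The paper's mechanism (Lemma \ref{Non_explosion_small_lambda_STWN}) is H\"older's \emph{inequality} inside the convolution with the Bessel kernel: writing $|u|^{1+\lambda_0}=|u|^{\lambda_0}\cdot|u|$, the factor $|u|^{\lambda_0}$ is placed in $L_{1/\lambda_0}$ and bounded by $\|u_m(s,\cdot)\|_{L_1}^{\lambda_0}\le S^{\lambda_0}$ on $[0,\tau_m(S)]$, while $R_{1/2+\kappa}$ must lie in $L_{2/(1-2\lambda_0)}$ (resp.\ $L_{1/(1-\lambda)}$ for the drift term); the local behavior $R_{1/2+\kappa}(x)\sim|x|^{\kappa-1/2}$ is exactly what turns these integrability requirements into $\kappa>\lambda_0$ and $\kappa>\lambda-1/2$, and what remains is linear in $\|u_m(s,\cdot)\|_{L_p}^p$, as Theorem \ref{embedding} \eqref{gronwall_type_ineq} requires. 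Relatedly, the $L_1$ control must be the supremum-in-time bound $\bE\sup_{t\le T}\|u_m(t,\cdot)\|_{L_1}^{1/2}\le N\bE\|u_0\|_{L_1}^{1/2}$ of Lemma \ref{L_1_bound_small_lambda_STWN_lemma} (a maximal inequality for the nonnegative local martingale $\int u_m\,dx$), not the time-integrated $L_1(\Omega;L_1)$ bound you describe; otherwise neither the stopping time $\tau_m(S)$ nor $P(\tau_m(S)<T)$ can be handled.

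A second, smaller point: the decomposition $u_n=v_n+w_n$ is the Case-1 device and is deliberately absent from the paper's Case-2 proof. Since $\sigma(u_n)=\mu|u_n|^{1+\lambda_0}$ is unbounded, $v_n$ is no longer controlled by the data alone, so the two estimates must be coupled --- at which point you are just estimating $\|u_n\|_{\cH_p^{1/2-\kappa}}$ by Theorem \ref{theorem_nonlinear_case} with both nonlinearities treated as free terms, which is what the paper does directly. Moreover, the pathwise ``energy'' estimate for $w_n$ that you invoke is the argument of Lemma \ref{Lq_bound_2}, which depends on the fundamental theorem of calculus and the arithmetic hypothesis $p-1+\lambda\in2\bN$; that hypothesis is not assumed in Theorem \ref{theorem_burgers_eq_STWN_super_linear}, so that route is unavailable and the kernel--H\"older estimate (which needs $\lambda<1$ so that $1/(1-\lambda)<\infty$) is the substitute. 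The uniqueness and H\"older-regularity steps of your proposal are fine.
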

\begin{proof}
See {\bf{Proof of Theorem \ref{theorem_burgers_eq_STWN_super_linear}}} in Section \ref{Proof of the second case}.
\qed
\end{proof}

\begin{remark}
\label{bar b is independent of x 2}
The coefficient $\bar b$ is assumed to be independent of $x$ (Assumption \ref{deterministic_part_assumption_on_coeffi_space-time_white_noise}), and it is a sufficient condition for the existence of a global solution. Indeed, since a uniform $L_1$ bound of the solution is employed to extend existence time, we show that all the paths of $\|u(t,\cdot)\|_{L_1}$ are bounded. Intuitively, if we take integration and expectation to \eqref{burger's_eq_space_time_white_noise_super_linear}, the nonlinear term $\bar b |u|^\lambda u_x$ is removed since the nonlinear term is interpreted as 
\begin{equation}
\label{nonlinear_removed}
\bar b |u|^\lambda u_x = \bar b \frac{1}{1+\lambda} \left( |u|^{1+\lambda} \right)_{x} = \bar b \frac{1}{1+\lambda} \left( |u|^{\lambda}\cdot|u| \right)_{x}
\end{equation}
and $\bar b$ is independent of $x$. Thus, it turns out that $\|u(t,\cdot)\|_{L_1}$ is a local martingale, and its trajectories are bounded almost surely; see Lemma \ref{L_1_bound_small_lambda_STWN_lemma}.

\end{remark}

\begin{remark}
\label{super-linear_remark_1}
\begin{enumerate}[(i)]

\item
\label{super-linear_remark_1_1}
Note that $\lambda$ is assumed to be less than 1. Indeed, due to the \textit{unbounded} diffusion coefficient, we only obtain the uniform $L_1$ bound of the solution instead of $L_p$ bound ($p>1$). Since the surplus part $|u|^\lambda$ in \eqref{nonlinear_removed} should be summable for some $s = 1/\lambda > 1$, $\lambda$ is less than $1$.

\item 
Because $p > \frac{6}{1-2\kappa}$, there are $\alpha$ and $\beta$ satisfying \eqref{condition_for_alpha_beta}.

\item 
The condition on the initial data $u_0\in L_1(\Omega;L_1)$ is employed to obtain the uniform $L_1$ bound of the solution.
\end{enumerate}

\end{remark}

\begin{remark}

The regularity of the solution depends on $\lambda$ and $\lambda_0$. Indeed, the uniform $L_1$ bound of solutions is achieved, and the bound is employed to prove the local solution is non-explosive.
Then, restrictions on solution regularity are required to control the nonlinear terms $|u|^\lambda u_x$ and $|u|^{\lambda_0+1}$ since the surplus parts $|u|^\lambda$ and $|u|^{\lambda_0}$ should be summable to power $s = 1/\lambda$ and $2s_0 = 1/\lambda_0$, respectively; see Lemma \ref{Non_explosion_small_lambda_STWN}.

\end{remark}
\begin{remark}

The H\"older regularity of the solution introduced in Theorem \ref{theorem_burgers_eq_STWN_super_linear} depends on $\alpha$ and $\beta$. For example, for small $\ep > 0$, let $\alpha = \frac{1}{p}+\frac{\ep}{4}$ and $\beta = \frac{1}{p}+\frac{\ep}{2}$. Then, we have
\begin{equation*}
\sup_{t\leq T}|u(t,\cdot)|_{C^{\frac{1}{2}-\kappa-\frac{3}{p}-\ep}(\bR)} < \infty,
\end{equation*}
almost surely. Similarly, for small $\ep > 0$, set $\alpha = \frac{1}{2}\left(\frac{1}{2}-\kappa-\frac{1}{p}\right)-\ep$ and $\beta = \frac{1}{2}\left(\frac{1}{2}-\kappa-\frac{1}{p}\right)-\frac{\ep}{2}$. Then, we have
\begin{equation*}
\sup_{x\in\bR}|u(\cdot,x)|_{C^{\frac{1}{4}-\frac{\kappa}{2}-\frac{3}{2p}-\ep}([0,T])}  < \infty
\end{equation*}
almost surely.

\end{remark}

Next, we provide the uniqueness of the solution in $p$ to achieve the maximal H\"older regularity of the solution.

\begin{theorem}
\label{uniqueness_in_p_2}
Assume that all the conditions of Theorem \ref{theorem_burgers_eq_STWN_super_linear} holds. Let $u\in \cH_{p,loc}^{1/2-\kappa}$ be the solution of equation \eqref{burger's_eq_space_time_white_noise_super_linear} introduced in Theorem \ref{theorem_burgers_eq_STWN_super_linear}. If $q>p$ and $u_0\in U_{q}^{1/2-\kappa}\cap L_{1}(\Omega;L_{1})$, then $u\in \cH_{q,loc}^{1/2-\kappa}$.
\end{theorem}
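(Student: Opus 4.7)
The strategy is to construct an alternative solution in the stronger space $\cH_{q,loc}^{1/2-\kappa}$ and then invoke uniqueness at exponent $p$. Since $q > p > 6/(1-2\kappa)$ automatically satisfies the summability requirement and the assumption $u_0 \in U_q^{1/2-\kappa} \cap L_1(\Omega; L_1)$ holds, Theorem~\ref{theorem_burgers_eq_STWN_super_linear} applied with $q$ in place of $p$ furnishes a nonnegative $v \in \cH_{q,loc}^{1/2-\kappa}$ solving \eqref{burger's_eq_space_time_white_noise_super_linear} with initial datum $u_0$. Once it is verified that this $v$ also belongs to $\cH_{p,loc}^{1/2-\kappa}$, the uniqueness assertion of Theorem~\ref{theorem_burgers_eq_STWN_super_linear} at exponent $p$ forces $u = v$ in $\cH_{p,loc}^{1/2-\kappa}$, whence $u \in \cH_{q,loc}^{1/2-\kappa}$.

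The core step is thus upgrading $v$ from $\cH_{q,loc}^{1/2-\kappa}$ to $\cH_{p,loc}^{1/2-\kappa}$. Let $\{\tau_n\}$ be localizing stopping times with $v \in \cH_q^{1/2-\kappa}(\tau_n)$ for each $n$. Because $q > 6/(1-2\kappa)$, Corollary~\ref{embedding_corollary} supplies an almost surely continuous version of $v$ on $[0,\tau_n]\times\bR$ with $\sup_{s\leq\tau_n,\,x\in\bR}|v(s,x)| < \infty$ almost surely. Introduce
\begin{equation*}
\sigma_{n,m} := \tau_n \wedge \inf\left\{t \geq 0 : \sup_{s \leq t,\,x \in \bR}|v(s,x)| \geq m\right\},
\end{equation*}
so that $\sigma_{n,m} \uparrow \tau_n$ as $m \to \infty$ and $|v| \leq m$ on $\opar 0, \sigma_{n,m} \cbrk$. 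Coupling this pointwise bound with the uniform $L_1$-bound on $v$ discussed in Remark~\ref{bar b is independent of x 2} and the elementary interpolation $\|v(t,\cdot)\|_{L_p}^p \leq \|v(t,\cdot)\|_{L_\infty}^{p-1}\|v(t,\cdot)\|_{L_1}$, one obtains $L_p$-integrability of $v$ in space, uniformly on $\opar 0, \sigma_{n,m}\cbrk$. On that interval the equation for $v$ reduces to a \emph{linear} SPDE with random bounded coefficients: the drift $\bar b|v|^\lambda v_x$ is absorbed into $\tilde b_m(t,x) v_x$ with $\tilde b_m := \bar b|v|^\lambda$ bounded by $Km^\lambda$, while the noise $\mu|v|^{1+\lambda_0}\eta_k$ is controlled in $H_p^{-1/2-\kappa}(\ell_2)$ using the pointwise-multiplier estimate of Lemma~\ref{prop_of_bessel_space}\eqref{pointwise_multiplier}. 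Since $u_0 \in U_p^{1/2-\kappa}$, Krylov's linear $L_p$-theory for stochastic parabolic equations identifies $v$ as the unique element of $\cH_p^{1/2-\kappa}(\sigma_{n,m})$ satisfying this linear equation. Sending $m, n \to \infty$ then gives $v \in \cH_{p,loc}^{1/2-\kappa}$.

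The main obstacle is precisely this upgrade. Pointwise boundedness of $v$ alone does not imply $v(t,\cdot) \in L_p(\bR)$ on the unbounded spatial domain, since $L_q \cap L_\infty \not\subset L_p$ on $\bR$ when $q>p$, so the uniform $L_1$-bound---the key tool already underlying the Case 2 existence theory---must be invoked in order to interpolate into $L_p$. Once spatial $L_p$-integrability is secured, the remaining work is a relatively standard application of the linear $L_p$-theory to a stopped SPDE with bounded random coefficients, where the multiplicative space-time white noise is treated via the pointwise-multiplier machinery used throughout the paper.
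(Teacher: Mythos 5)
Your overall strategy is the mirror image of the paper's. The paper fixes the $\cH_{p,loc}^{1/2-\kappa}$ solution $u$, stops it when $\sup_x|u|$ reaches $m$, and upgrades it to exponent $q$ using the trivial inequality $\int_\bR|u|^{q}\,dx\le m^{q-p}\int_\bR|u|^p\,dx$ (valid because one passes from a \emph{lower} to a \emph{higher} exponent under an $L_\infty$ cutoff); it then treats the nonlinear terms as free data in $\bH_q^{-3/2-\kappa}(\tau_{m,n})$ and $\bH_q^{-1/2-\kappa}(\tau_{m,n},\ell_2)$, solves the resulting linear equation via Theorem \ref{theorem_nonlinear_case}, and identifies the output with $u$ by subtracting and using uniqueness of the homogeneous deterministic equation. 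You instead go from the exponent-$q$ solution $v$ down to exponent $p$, which is why you must invoke the $L_1$ bound and the interpolation $\|v\|_{L_p}^p\le\|v\|_{L_\infty}^{p-1}\|v\|_{L_1}$; that observation is correct and the downgrade can be made rigorous (you need either the first-moment bound $\bE\int_0^T\|v(t,\cdot)\|_{L_1}\,dt<\infty$, which follows from the computations behind Lemma \ref{L_1_bound_small_lambda_STWN_lemma}, or an additional stopping time capping $\|v(t,\cdot)\|_{L_1}$, since that lemma as stated only controls the $1/2$-moment of the supremum), but it is strictly more work than the paper's direction, which needs no $L_1$ information at all for this step.

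The genuine problem is how you close the argument on $\opar0,\sigma_{n,m}\cbrk$. You propose to absorb the convection term into a first-order coefficient $\tilde b_m=\bar b|v|^\lambda$, bounded by $Km^\lambda$, and then invoke the linear $L_p$-theory ``with bounded random coefficients.'' That step fails in this framework: the equation lives in $H_p^{-3/2-\kappa}$, and by Lemma \ref{prop_of_bessel_space} \eqref{pointwise_multiplier} multiplying $v_x\in H_p^{-1/2-\kappa}$ by $\tilde b_m$ requires $\tilde b_m\in C^{1/2+\kappa+\alpha}$, while the solvability and uniqueness theorem underlying Theorem \ref{theorem_nonlinear_case} requires the first-order coefficient to satisfy $|b(t,\cdot)|_{C^2}\le K$ as in Assumption \ref{deterministic_part_assumption_on_coeffi_space-time_white_noise}. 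But $|v|^\lambda$ is at best H\"older continuous in $x$ of order below $1/2$, and mere boundedness is nowhere near sufficient; there is no linear theory in these negative-order spaces for bounded measurable first-order coefficients. The repair is exactly what the paper does: write $\bar b|v|^\lambda v_x=\frac{\bar b}{1+\lambda}\left(|v|^{1+\lambda}\right)_x$ and treat it, together with $\mu|v|^{1+\lambda_0}\eta_k$, as \emph{fixed free terms} in $\bH_p^{-3/2-\kappa}(\sigma_{n,m})$ and $\bH_p^{-1/2-\kappa}(\sigma_{n,m},\ell_2)$ respectively (both finite thanks to $|v|\le m$ and $v\in\bL_p(\sigma_{n,m})$), solve the linear equation with this data, and identify $v$ with that solution through the uniqueness of the homogeneous deterministic problem. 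With that substitution your route goes through.
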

\begin{proof}
See {\bf{Proof of Theorem \ref{uniqueness_in_p_2}}} in Section \ref{Proof of the second case}.
\qed
\end{proof}

By Theorems \ref{theorem_burgers_eq_STWN_super_linear} and \ref{uniqueness_in_p_2}, the maximal H\"older regularity of the solution follows.

\begin{corollary}
\label{maximal holder regularity2}
Suppose $u_0\in U_p^{1/2-(\lambda - 1/2)\vee \lambda_0}\cap L_1(\Omega;L_1)$ for all $p>2$. Then, for $T>0$ and small $\ep>0$, we have
\begin{equation}
\label{holder_regularity_large_lambda}
\sup_{t\leq T}|u(t,\cdot)|_{C^{1/2-(\lambda - 1/2)\vee \lambda_0-\ep}(\bR)} + \sup_{x\in\bR}|u(\cdot,x)|_{C^{\frac{1/2-(\lambda - 1/2)\vee \lambda_0}{2}-\ep}([0,T])}  < \infty
\end{equation}
almost surely.

\end{corollary}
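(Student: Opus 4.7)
The plan is to mimic the proof of Corollary \ref{maximal holder regularity1}, with Theorems \ref{theorem_burgers_eq_STWN_super_linear} and \ref{uniqueness_in_p_2} replacing Theorems \ref{theorem_burgers_eq_large_lambda_STWN} and \ref{uniqueness_in_p_1}, respectively. Write $\kappa^* := (\lambda - 1/2) \vee \lambda_0$, so the target exponents are $1/2 - \kappa^* - \ep$ in space and $(1/2 - \kappa^*)/2 - \ep$ in time. The hypothesis $\kappa^* \in [0, 1/2)$ guarantees that these exponents are positive for all sufficiently small $\ep > 0$.

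First I would fix $T > 0$ and small $\ep > 0$ and set $\kappa := \kappa^* + \ep/4$, which lies in $(\kappa^*, 1/2)$. For every $p > 6/(1 - 2\kappa)$ satisfying the hypotheses of Theorem \ref{theorem_burgers_eq_STWN_super_linear}, that theorem yields a unique nonnegative solution $u_p \in \cH_{p,\mathrm{loc}}^{1/2 - \kappa}$ to \eqref{burger's_eq_space_time_white_noise_super_linear}. Since by assumption $u_0 \in U_q^{1/2 - \kappa^*} \subset U_q^{1/2 - \kappa}$ for every $q > 2$, Theorem \ref{uniqueness_in_p_2} implies that all the solutions $u_p$ coincide, so there is a single solution $u$ lying in $\cH_p^{1/2 - \kappa}(T)$ for every sufficiently large $p$.

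Second, I would apply Corollary \ref{embedding_corollary} twice, with different parameter choices for the two desired Hölder bounds. For the spatial estimate, take $\delta = 0$, $p = 12/\ep$, $\alpha = 1/p + \ep/8$, $\beta = 1/p + \ep/4$; a direct check confirms that \eqref{condition_for_alpha_beta} is satisfied for $\ep$ small, and the resulting space Hölder exponent $1/2 - \kappa - 2\beta - 1/p$ evaluates exactly to $1/2 - \kappa^* - \ep$. For the temporal estimate, take $p = 12/(5\ep)$ together with $\alpha, \beta$ chosen near the upper bound $\tfrac{1}{2}(1/2 - \kappa - 1/p)$ (with $\alpha < \beta$) so that $\alpha - 1/p \geq (1/2 - \kappa^*)/2 - \ep$; the same parameter choices as in the proof of Corollary \ref{maximal holder regularity1} work after the obvious shift from $1/2$ to $1/2 - \kappa^*$.

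No genuine obstacle is expected at this level: all the substantive analytic content (uniform $L_1$ bounds, non-explosion of local solutions, $L_p$-regularity, and consistency of solutions across $p$) has already been absorbed into Theorems \ref{theorem_burgers_eq_STWN_super_linear} and \ref{uniqueness_in_p_2}. The only care required in this corollary is a careful algebraic verification of \eqref{condition_for_alpha_beta} and of the resulting Hölder exponents for the chosen $\alpha, \beta, p$, which is routine once $\ep$ is taken small enough that $\kappa^* + \ep < 1/2$.
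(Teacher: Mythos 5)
Your proposal is correct and follows essentially the same route as the paper's own proof: fix $\kappa=(\lambda-1/2)\vee\lambda_0+\ep/4$, invoke Theorem \ref{theorem_burgers_eq_STWN_super_linear} for each admissible $p$, use Theorem \ref{uniqueness_in_p_2} to identify the solutions across $p$, and then apply Corollary \ref{embedding_corollary} twice with large $p$ and $\alpha,\beta$ chosen near the endpoints. The only difference is the specific value of $p$ used for the temporal estimate ($12/(5\ep)$ versus the paper's $4/\ep$), which is immaterial since both choices satisfy \eqref{condition_for_alpha_beta} and yield the exponent $\frac{1/2-(\lambda-1/2)\vee\lambda_0}{2}-\ep$.
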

\begin{proof}
Even though the proof is similar to the proof of Corollary \ref{maximal holder regularity1}, we contain the proof for the completeness of the article. Let $T\in(0,\infty)$ and $\kappa = \left(\lambda-\frac{1}{2}\right)\vee\lambda_0+\frac{\ep}{4}$ for small $\ep>0$. Note that for each $p > \frac{6}{1-2\kappa}$, there exists a unique solution $u = u_p\in \cH_p^{1/2-\kappa}(T)$ to equation \eqref{burger's_eq_space_time_white_noise_super_linear} by Theorem \ref{theorem_burgers_eq_STWN_super_linear}. Since $u_0 \in U_p^{1/2-(\lambda - 1/2)\vee\lambda_0}$ for all $p > 2$, Theorem \ref{uniqueness_in_p_2} implies that all the solutions $u = u_p$ coincides. Then, by considering $p = \frac{12}{\ep}$, $\alpha = \frac{1}{p} + \frac{\ep}{8}$, and $\beta = \frac{1}{p} + \frac{\ep}{4}$ in Corollary \ref{embedding_corollary}, almost surely 
\begin{equation*}
\sup_{t\leq T} |u(t,\cdot)|_{C^{1/2-(\lambda - 1/2)\vee \lambda_0-\ep}(\bR)} \leq |u|_{C^{\alpha-\frac{1}{p}}\left([0,T];C^{1/2-\kappa-2\beta-1/p}(\bR)\right)}<\infty.
\end{equation*}
On the other hand, if we consider $p = \frac{4}{\ep}$, $\alpha = \frac{1}{2}\left( \frac{1}{2}-\kappa-\frac{1}{p} \right) - \frac{\ep}{2}$, and $\beta = \frac{1}{2}\left( \frac{1}{2}-\kappa - \frac{1}{p} \right)- \frac{\ep}{4}$ in Corollary \ref{embedding_corollary}, almost surely 
\begin{equation*}
\sup_{x\in \bR} |u(\cdot,x)|_{C^{\frac{1}{2}\left( \frac{1}{2}-\kappa \right) - \ep}([0,T])}\leq |u|_{C^{\alpha-\frac{1}{p}}\left([0,T];C^{1-2\beta-\frac{1}{p}}(\bR)\right)}<\infty.
\end{equation*}
The corollary is proved.

\qed
\end{proof}


\section{Proof of the first case: generalized Burgers' equation with the bounded Lipschitz diffusion coefficient \texorpdfstring{$\sigma(u)$}{Lg}} 
\label{Proof of the first case}

This section suggests proof of Theorems \ref{theorem_burgers_eq_large_lambda_STWN} and \ref{uniqueness_in_p_1}. In the proof of Theorem \ref{theorem_burgers_eq_large_lambda_STWN}, the uniqueness of a global solution follows from the uniqueness of the local ones; see Lemma \ref{cut_off_lemma_large_lambda_STWN}. Also, we construct a global solution candidate by pasting local ones to show the existence of a global solution; see Remark \ref{def_u}. To check the global solution candidate $u$ is non-explosive, we show that for any $T<\infty$,
\begin{equation}
\label{non_explosive}
P\left( \left\{ \omega\in\Omega:\sup_{t\leq T,x\in\bR} |u(t,x)| > R \right\} \right) \to 0
\end{equation}
as $R\to\infty$. Then, to prove \eqref{non_explosive}, we separate the local solution $u_m$ in two parts; noise-related part $v$ and nonlinear-related part $w_m:=u_m-v$. In Lemma \ref{noise_cancelling_lemma}, we control the noise-related part $v$ with the initial data $u_0$ and bound of $\sigma(u)$. On the other hand, in Lemma \ref{Lq_bound_2}, the other part $w_m$ is dominated by $v$. In the proof of Lemma \ref{Lq_bound_2}, we employ the fundamental theorem of calculus with $p-1+\lambda\in 2\bN$.

\vspace{1mm}

First of all, we introduce a theorem describing the uniqueness, existence, and $L_p$-regularity of a solution to semilinear equation 
\begin{equation} \label{nonlinear_equation}
du = \left(au_{xx} + b u_{x} + cu + f(u)\right) dt + g^k(u) dw_t^k,\quad t>0\,; \quad u(0,\cdot) = u_0(\cdot),
\end{equation}
where $f(u)$ and $g(u)$ satisfy Assumption \ref{assumption_on_f_and_g}. This result is used to construct local solutions.

\begin{assumption}[$\tau$] \label{assumption_on_f_and_g}
\begin{enumerate}[(i)]
\item 
The function $f(t,x,u)$ is $\cP\times\cB(\bR)\times\cB(\bR)$-measurable and $f(t,x,0)\in \bH_{p}^{\gamma}(\tau)$.

\item 
The function $g^k(t,x,u)$ is $\cP\times\cB(\bR)\times\cB(\bR)$-measurable and $g(t,x,0) = \left( g^1(t,x,0),g^2(t,x,0),\dots \right)\in \bH_{p}^{\gamma+1}(\tau,\ell_2)$.

\item 
For any $\ep>0$, there exists a constant $N_\ep$ such that
\begin{equation*}
\begin{aligned}
\| f(u) - f(v) \|_{\bH_{p}^\gamma(\tau)} + \| g(u) &- g(v) \|_{\bH_{p}^{\gamma+1}(\tau,\ell_2)} \\
&\leq \ep\| u-v \|_{\bH_p^{\gamma+2}(\tau)} + N_\ep \| u-v \|_{\bH_p^{\gamma+1}(\tau)}
\end{aligned}
\end{equation*}
for any $u,v\in \bH_p^{\gamma+2}(\tau)$.

\end{enumerate}
\end{assumption}

\begin{theorem} 
\label{theorem_nonlinear_case}
Let $\tau\leq T$ be a bounded stopping time, $\gamma\in[-2,-1]$, and $p\geq2$.  Suppose Assumptions \ref{deterministic_part_assumption_on_coeffi_space-time_white_noise} and \ref{assumption_on_f_and_g} ($\tau$) hold. Then, for any $u_0\in U_p^{\gamma+2}$, equation \eqref{nonlinear_equation} has a unique solution $u\in\cH_p^{\gamma+2}(\tau)$ such that
\begin{equation} 
\label{nonlinear_estimate}
\| u \|_{\cH_p^{\gamma+2}(\tau)} \leq N\left(\|f(0)\|_{\bH_p^{\gamma}(\tau)} + \|g(0)\|_{\bH_p^{\gamma+1}(\tau,\ell_2)} + \| u_0 \|_{U_p^{\gamma+2}}\right),
\end{equation}
where $N$ depends on constants $d,p,\gamma,K,T$ and the function $N_\ep$ $(\ep>0)$. 
\end{theorem}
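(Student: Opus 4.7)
The plan is to prove Theorem \ref{theorem_nonlinear_case} by a Picard iteration built on top of the linear $L_p$-theory. First, I would invoke Krylov's analytic $L_p$-theory for linear SPDE with random coefficients (see \cite[Theorem 5.1]{kry1999analytic}): under Assumption \ref{deterministic_part_assumption_on_coeffi_space-time_white_noise}, for every bounded stopping time $\tau \leq T$, every triple $(F,G,u_0) \in \bH_p^{\gamma}(\tau) \times \bH_p^{\gamma+1}(\tau,\ell_2) \times U_p^{\gamma+2}$, the linear equation
\begin{equation*}
du = (au_{xx}+bu_x+cu+F)\,dt + G^k\,dw_t^k,\quad u(0,\cdot)=u_0,
\end{equation*}
admits a unique solution $u \in \cH_p^{\gamma+2}(\tau)$ obeying the a priori bound
\begin{equation*}
\|u\|_{\cH_p^{\gamma+2}(\tau)} \leq N_0 \bigl(\|F\|_{\bH_p^\gamma(\tau)} + \|G\|_{\bH_p^{\gamma+1}(\tau,\ell_2)} + \|u_0\|_{U_p^{\gamma+2}}\bigr),
\end{equation*}
where $N_0 = N_0(p,\gamma,K,T)$. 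This becomes the backbone of the whole argument.

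Next I would introduce the solution map $\cR : \cH_p^{\gamma+2}(\tau) \to \cH_p^{\gamma+2}(\tau)$ defined by letting $\cR v$ be the unique $\cH_p^{\gamma+2}(\tau)$-solution of the linear problem with free terms $F = f(v)$ and $G = g(v)$ and initial data $u_0$. Assumption \ref{assumption_on_f_and_g} guarantees that $(f(v),g(v)) \in \bH_p^\gamma(\tau) \times \bH_p^{\gamma+1}(\tau,\ell_2)$, so $\cR$ is well defined, and the linear estimate together with Assumption \ref{assumption_on_f_and_g}(iii) gives, for every $\varepsilon > 0$ and all $u,v \in \cH_p^{\gamma+2}(\tau)$,
\begin{equation*}
\|\cR u - \cR v\|_{\cH_p^{\gamma+2}(\tau)} \leq N_0 \varepsilon \,\|u-v\|_{\bH_p^{\gamma+2}(\tau)} + N_0 N_\varepsilon\, \|u-v\|_{\bH_p^{\gamma+1}(\tau)}.
\end{equation*}
The same inequality holds with $\tau$ replaced by $\tau \wedge t$ for every $t \in (0,T]$, and this is the crucial ingredient.

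I would now build a solution by iteration: set $u^0 = 0$ and $u^{n+1} = \cR u^n$. Applied to $u = u^n$, $v = u^{n-1}$, and $\tau$ replaced by $\tau \wedge t$, the above inequality yields
\begin{equation*}
\|u^{n+1}-u^n\|_{\cH_p^{\gamma+2}(\tau\wedge t)}^p \leq (N_0\varepsilon)^p\, \|u^n-u^{n-1}\|_{\cH_p^{\gamma+2}(\tau\wedge t)}^p + N_1 \|u^n-u^{n-1}\|_{\bH_p^{\gamma+1}(\tau\wedge t)}^p,
\end{equation*}
with $N_1 = N_1(N_\varepsilon,N_0,p)$. Fixing $\varepsilon$ so small that $N_0\varepsilon \leq 1/2$ and absorbing the first term, the remainder has exactly the form \eqref{gronwall_type_ineq_condi} with $\gamma_0 = \gamma - 1 < \gamma$, so Theorem \ref{embedding}(iii) converts it into the geometric decay $\|u^{n+1}-u^n\|_{\cH_p^{\gamma+2}(\tau)} \leq C\rho^n$ for some $\rho \in (0,1)$ after iterating in $t$ (or, equivalently, a Gr\"onwall argument in time on the sequence of differences). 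Hence $(u^n)$ is Cauchy in $\cH_p^{\gamma+2}(\tau)$; its limit $u$ satisfies \eqref{def_of_sol} with $f = f(u)$ and $g^k = g^k(u)$ by passing to the limit in the weak formulation using the continuity of $f, g$ afforded by Assumption \ref{assumption_on_f_and_g}(iii).

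Uniqueness and the estimate \eqref{nonlinear_estimate} follow the same pattern. For uniqueness, two solutions $u,v$ with the same initial data are both fixed points of $\cR$; the contraction-type bound above with $\varepsilon$ small becomes $\|u-v\|_{\cH_p^{\gamma+2}(\tau\wedge t)}^p \leq N_1 \|u-v\|_{\bH_p^{\gamma+1}(\tau\wedge t)}^p$, and Theorem \ref{embedding}(iii) with $N_0 = 0$ forces $u = v$. For \eqref{nonlinear_estimate}, apply the contraction bound with $v = 0$, so that $\cR v$ is the solution of the linear equation with source $(f(0),g(0))$; the linear estimate bounds $\|\cR 0\|_{\cH_p^{\gamma+2}(\tau)}$ by the right-hand side of \eqref{nonlinear_estimate}, and absorbing $N_0\varepsilon\|u\|_{\bH_p^{\gamma+2}(\tau)}$ on the left and applying Theorem \ref{embedding}(iii) once more closes the estimate. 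The main technical obstacle is the gap between the target norm $\cH_p^{\gamma+2}(\tau)$ and the weaker norm $\bH_p^{\gamma+1}(\tau)$ appearing in Assumption \ref{assumption_on_f_and_g}(iii): without the interpolation-plus-Gr\"onwall mechanism of Theorem \ref{embedding}(iii), one only gets a quasi-contraction and not a genuine fixed point, so the whole scheme hinges on that step (which in turn forces the hypothesis $p \geq 2$ sufficient for the underlying BDG inequalities; the edge case $p=2$ is standard once the Picard differences are shown to be summable).
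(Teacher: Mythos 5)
Your proposal is correct in substance, but it takes a genuinely different route from the paper. The paper does not run a fixed-point argument at all: it treats the nonlinear solvability for the terminal time $\tau=T$ as already known, citing \cite[Theorem 5.1]{kry1999analytic} (which is itself proved by essentially the Picard scheme you describe), and the entire content of its proof is the reduction of a general bounded stopping time $\tau\leq T$ to that case --- for existence by replacing $f,g$ with the truncations $1_{t\leq\tau}f$, $1_{t\leq\tau}g$, which still satisfy Assumption \ref{assumption_on_f_and_g}$(T)$, and for uniqueness by extending a given solution $v$ on $\opar0,\tau\cbrk$ to a solution $\bar v$ on $[0,T]$ of the linear equation with frozen free terms $\bar f(v),\bar g(v)$, showing $v=\bar v$ on $\opar0,\tau\cbrk$ via the deterministic equation for $v-\bar v$, and then invoking uniqueness on $[0,T]$. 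Your approach is more self-contained (it only needs the \emph{linear} theory plus Theorem \ref{embedding}) and handles the stopping time natively, whereas the paper's is shorter because it outsources the iteration; both are legitimate. One technical point you should tighten: Theorem \ref{embedding}(iii) cannot be applied verbatim to the Picard differences, since in the recursion $\|u^{n+1}-u^n\|_{\cH_p^{\gamma+2}(\tau\wedge t)}^p\leq (N_0\ep)^p\|u^n-u^{n-1}\|_{\cH_p^{\gamma+2}(\tau\wedge t)}^p+N_1\|u^n-u^{n-1}\|_{\bH_p^{\gamma+1}(\tau\wedge t)}^p$ the two sides involve \emph{different} functions, so there is nothing to ``absorb''; instead one must set $b_n(t):=\|u^{n+1}-u^n\|_{\cH_p^{\gamma+2}(\tau\wedge t)}^p$, use \eqref{solution_embedding} to bound $\|u^n-u^{n-1}\|_{\bH_p^{\gamma+1}(\tau\wedge t)}^p$ by $N\int_0^t b_{n-1}(s)\,ds$, and iterate the resulting two-term recursion $b_n(t)\leq 2^{-p}b_{n-1}(t)+N\int_0^t b_{n-1}(s)\,ds$ to get summability of $b_n(T)^{1/p}$. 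This is standard and your parenthetical ``Gr\"onwall argument on the sequence of differences'' points at the right fix, but as written the appeal to Theorem \ref{embedding}(iii) is not literally applicable there; it does apply correctly where you use it for uniqueness and for the a priori estimate \eqref{nonlinear_estimate}, since there the same $u$ (or $u-v$) appears on both sides.
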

\begin{proof}
We only consider case $\tau \leq T$ because case $\tau = T$ follows from \cite[Theorem 5.1]{kry1999analytic}. 

\textit{Step 1. (Existence). } Set
$$
\bar{f}(t, u) := 1_{t\leq \tau} f(t,u)\quad\text{and}\quad \bar{g}(t, u) := 1_{t\leq \tau} g(t,u).
$$
Note that $\bar{f}(u)$ and $\bar{g}(u)$ satisfy Assumption \ref{assumption_on_f_and_g} $(T)$. Therefore, by \cite[Theorem 5.1]{kry1999analytic}, there exists a unique solution $u\in \cH_{p}^{\gamma+2}(T)$ such that $u$ satisfies equation \eqref{nonlinear_equation} with $\bar{f}, \bar{g}$, instead of $f,g$. Since $\tau\leq T$, we have $u\in\cH_{p}^{\gamma+2}(\tau)$ and $u$ satisfies equation \eqref{nonlinear_equation} and estimate \eqref{nonlinear_estimate} with $f,g$.

\textit{Step 2. (Uniqueness). } Suppose $v\in \cH_{p}^{\gamma+2}(\tau)$ is another solution to equation \eqref{nonlinear_equation}. Then, \cite[Theorem 5.1]{kry1999analytic} yields that there exists a unique solution $\bar{v}\in \cH_{p}^{\gamma+2}(T)$ satisfying
\begin{equation}
\label{equation_in_proof_of_uniqueness}
d\bar{v}=\left(a^{ij}\bar{v}_{x^ix^j} + b^i\bar{v}_{x^i} + c\bar{v} + \bar{f}(v) \right)dt + \sum_{k=1}^{\infty} \bar{g}^k(v)  dw^k_t, \quad 0<t<T
\end{equation}
with the initial data $\bar{v}(0,\cdot)=u_0$.
Notice that $\bar{f}(v)$ and $\bar{g}(v)$ are used in \eqref{equation_in_proof_of_uniqueness}, instead of $\bar{f}(\bar{v})$ and $\bar{g}(\bar{v})$.  Let  $\tilde{v}:=v-\bar{v}$. Then, for each $\omega\in\Omega$,
$$
d\tilde{v}=\left(a^{ij}\tilde{v}_{x^ix^j} + b^i\tilde{v}_{x^i} + c\tilde{v} \right)dt, \quad 0<t<\tau\, ; \quad \tilde{v}(0,\cdot)=0.
$$
Thus, \cite[Theorem 5.1]{kry1999analytic} implies $v(t,\cdot) = \bar{v}(t,\cdot)$ in $H_p^{\gamma+2}$ for all $t\leq \tau$ almost surely. Therefore, we can replace $\bar{f}(v), \bar{g}(v)$ with $\bar{f}(\bar{v}),\bar{g}(\bar{v})$ in equation \eqref{equation_in_proof_of_uniqueness}. Similarly, there exists $\bar{u}\in \cH_p^{\gamma+2}(T)$ satisfying \eqref{nonlinear_equation} and $u(t,\cdot) = \bar{u}(t,\cdot)$ in $H_p^{\gamma+2}$ for all $t\leq \tau$ almost surely. Since the uniqueness result in $\cH_{p}^{\gamma+2}(T)$ yields $\bar{u}=\bar{v}$ in $\cH_{p}^{\gamma+2}(T)$, we have $u = v$ in $H_p^{\gamma+2}$ for almost every $(\omega,t)\in\opar0,\tau\cbrk$. Thus, we have $u = v$ in $\cH_p^{\gamma+2}(\tau)$. The theorem is proved.
\qed
\end{proof}

Let $h(\cdot)\in C_c^\infty(\bR)$ be a nonnegative function such that
$h(z) = 1$ on $|z|\leq 1$ and $h(z) = 0$ on $|z|\geq2$. Then, for $m\in\bN$, define
\begin{equation}
\label{cut-off function}
h_m(z) := h(z/m).
\end{equation}

Below lemma provides the existence and uniqueness of local solutions $u_m$.

\begin{lemma} \label{cut_off_lemma_large_lambda_STWN}
Let $\lambda\in(0,\infty)$ $T\in(0,\infty)$, $\kappa\in (0,1/2)$, and $p \geq 2$. Suppose Assumptions \ref{deterministic_part_assumption_on_coeffi_space-time_white_noise} and \ref{stochastic_part_assumption_on_coeffi_space_time_white_noise_large_lambda} hold. For a bounded stopping time $\tau\leq T$, $m\in\bN$, and nonnegative initial data $u_0\in U_{p}^{1/2-\kappa}$, there exists $u_m \in \cH_p^{1/2-\kappa}(\tau)$  satisfying equation 
\begin{equation} 
\label{cut_off_equation_large_lambda_STWN}
du = \left(au_{xx} + b u_{x} + cu +  \bar{b}\left(u_+^{1+\lambda} h_m(u)\right)_{x}  \right) dt + \sigma(u)\eta_k dw_t^k; \quad u(0,\cdot) = u_0(\cdot),
\end{equation}
where $(t,x)\in(0,\tau)\times\bR$. Furthermore, $u_m \geq 0$. 
\end{lemma}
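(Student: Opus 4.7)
Set $\gamma := -\tfrac{3}{2} - \kappa$, which lies in $[-2,-1]$ since $\kappa \in (0,\tfrac{1}{2})$, and note $\gamma+2 = \tfrac{1}{2}-\kappa$. Put
\begin{equation*}
f(t,x,u) := \bar{b}(t)\bigl(u_+^{1+\lambda} h_m(u)\bigr)_x, \qquad g^k(t,x,u) := \sigma(t,x,u)\eta_k(x).
\end{equation*}
The plan is to verify Assumption~\ref{assumption_on_f_and_g}$(\tau)$ for this $\gamma$, $f$, and $g$, apply Theorem~\ref{theorem_nonlinear_case} to obtain a unique $u_m \in \cH_p^{1/2-\kappa}(\tau)$, and then establish $u_m \geq 0$ by a positivity argument.

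For the drift, I would exploit that the real-variable map $\Phi_m(u):= u_+^{1+\lambda}h_m(u)$ is supported on $[0,2m]$ and is globally Lipschitz in $u$ with a constant $C_m = C_m(\lambda,m,h)$, since its classical derivative $(1+\lambda)u_+^\lambda h_m(u) + u_+^{1+\lambda} h'(u/m)/m$ is bounded on $[0,2m]$ and vanishes outside. Hence $\|\Phi_m(u)-\Phi_m(v)\|_{L_p} \leq C_m\|u-v\|_{L_p}$ pointwise in $(\omega,t)$. Because $\gamma+1 = -\tfrac{1}{2}-\kappa<0$, Lemma~\ref{prop_of_bessel_space}\eqref{norm_bounded} yields the embedding $L_p\hookrightarrow H_p^{\gamma+1}$, and then Lemma~\ref{prop_of_bessel_space}\eqref{bounded_operator} together with boundedness of $\bar b$ gives
\begin{equation*}
\|f(u)-f(v)\|_{H_p^\gamma} \leq N K C_m \|u-v\|_{L_p}.
\end{equation*}
The multiplicative inequality (Lemma~\ref{prop_of_bessel_space}\eqref{multi_ineq}) with interpolation exponent $\theta=\tfrac{1}{2}+\kappa\in(\tfrac{1}{2},1)$ combined with Young's inequality converts this into the required form $\ep\|u-v\|_{H_p^{\gamma+2}} + N_\ep\|u-v\|_{H_p^{\gamma+1}}$.

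For the diffusion, I would invoke the standard space-time white-noise estimate from Krylov's $L_p$-theory (cf.\ \cite[Section~8.3]{kry1999analytic}): since $\gamma+1 < -\tfrac{1}{2}$ and the $\eta_k$ are uniformly bounded, the map $h \mapsto (h\eta_k)_{k\in\bN}$ is bounded from $L_p(\bR)$ into $H_p^{\gamma+1}(\ell_2)$. Coupled with the Lipschitz bound $|\sigma(u)-\sigma(v)|\leq K|u-v|$, this yields $\|g(u)-g(v)\|_{H_p^{\gamma+1}(\ell_2)} \leq N\|u-v\|_{L_p}$, which is again reduced to the $\ep$-form by the same interpolation/Young step. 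Since $\sigma(\cdot,0)=0$ (forced by $|\sigma(u)|\leq K(|u|\wedge1)$) and $\Phi_m(0)=0$, both $f(0)=0$ and $g(0)=0$, so the integrability parts of Assumption~\ref{assumption_on_f_and_g} are trivial. Theorem~\ref{theorem_nonlinear_case} then delivers the desired unique $u_m \in \cH_p^{1/2-\kappa}(\tau)$.

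For nonnegativity, I would regularize $(u_-)^2$ by a smooth nonincreasing convex $\psi_\ep\in C^2(\bR)$ supported in $(-\infty,0]$ and apply an It\^o-type formula to $t\mapsto \int_\bR \psi_\ep(u_m(t,x))\,dx$, after first mollifying $u_m$ and $\eta_k$ in $x$ so that $\sum_k \eta_k^2$ becomes meaningful. The three structural features making the right-hand side collapse are: (i) $u_0 \geq 0$ forces the initial value to be zero; (ii) $\sigma(\cdot,0)=0$ together with $|\sigma(u)|\leq K|u|$ controls the quadratic-variation contribution by a Gr\"onwall-absorbable multiple of $\int (u_m)_-^2 dx$; (iii) after integration by parts, $\bar b (\Phi_m(u_m))_x$ tested against $\psi_\ep'(u_m)$ contributes $0$, because $\Phi_m(u_m)$ vanishes wherever $u_m\leq 0$ and hence wherever $\psi_\ep'(u_m)\ne 0$. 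Passing $\ep \downarrow 0$ and removing the mollification will give $\bE\int_\bR (u_m)_-^2(t,x)\,dx = 0$ for every $t\leq\tau$, hence $u_m\geq 0$ a.s. The main obstacle is the rigorous justification of the It\^o formula and the mollification limit in the non-smooth white-noise setting; an alternative I would fall back on is to cite a general positivity theorem for semilinear SPDEs with $\sigma(\cdot,0)=0$ and drift vanishing on $\{u\leq 0\}$, whose hypotheses are met by the cut-off equation by construction.
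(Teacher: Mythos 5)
Your existence argument is essentially the paper's: the same Lipschitz bound for $u_+^{1+\lambda}h_m(u)$ (the paper verifies it by the case analysis \eqref{lipschitz_check_cut_off} rather than via the derivative, but the content is identical), the same reduction of the drift and of $\sigma(u)\eta$ to $L_p$-Lipschitz estimates in the negative-order spaces $H_p^{-3/2-\kappa}$ and $H_p^{-1/2-\kappa}(\ell_2)$ (the paper does the $\ell_2$ step by hand with the kernel $R_{1/2+\kappa}$ and Parseval, which is exactly the ``standard estimate'' you invoke), the same interpolation-plus-Young step to reach the $\ep$-form of Assumption \ref{assumption_on_f_and_g}, and the same appeal to Theorem \ref{theorem_nonlinear_case}. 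That half of your proposal is correct and matches the paper.

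The nonnegativity step is where you diverge, and there is a genuine gap. You propose applying an It\^o-type formula to $\int_\bR \psi_\ep(u_m)\,dx$ for $u_m$ itself, but $u_m$ only lives in $\cH_p^{1/2-\kappa}(\tau)$ with $1/2-\kappa<1/2$, so $u_{m,x}$ is not a function, the term $\int \Phi_m'(u_m)u_{m,x}\psi_\ep'(u_m)\,dx$ you want to kill is not defined, and mollifying $u_m$ in $x$ does not commute with the nonlinear drift $\Phi_m(u_m)$ or with $\psi_\ep(u_m)$ in any way that lets you close the argument at this regularity. Your fallback of citing a general positivity theorem runs into the same obstruction: such theorems (the paper uses \cite[Theorem 2.5]{krylov2007maximum}) require more regularity than $\cH_p^{1/2-\kappa}$. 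The paper resolves this by inserting an approximation layer you do not anticipate: it takes nonnegative $u_0^n\in U_p^1$ converging to $u_0$ in $U_p^{1/2-\kappa}$, truncates the noise to finitely many terms $\sum_{k\le n}$, obtains solutions $u_m^n\in\cH_p^1(\tau)$ to which the maximum principle \emph{does} apply, and then proves $u_m^n\to u_m$ in $\cH_p^{1/2-\kappa}(\tau)$ via the Gr\"onwall-type inequality of Theorem \ref{embedding} \eqref{gronwall_type_ineq} together with a dominated-convergence argument for the tail $\|\sigma(u_m)\eta 1_{k>n}\|_{\bH_p^{-1/2-\kappa}(\tau,\ell_2)}\to 0$. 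This stability estimate is a substantive piece of the proof that your sketch would need to supply before any positivity principle can be brought to bear.
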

\begin{proof}
Notice that for $u,v\in \bR$, we have
\begin{equation}
\label{nonlinear_cutoff}
\left| u_+^{1+\lambda}h_m(u) - v_+^{1+\lambda}h_m(v) \right| \leq N_m|u-v|.
\end{equation}
Indeed,
\begin{equation} \label{lipschitz_check_cut_off}
\begin{aligned}
&\left| u_+^{1+\lambda}h_m(u) - v_+^{1+\lambda}h_m(v) \right| \\
&=
\begin{cases}
\,\,\left| u^{1+\lambda}h_m(u) - v^{1+\lambda}h_m(v) \right| \leq N_m|u-v|  &\text{ if}\quad u,v\geq0,\\
\,\,u^{1+\lambda}h_m(u) \leq (2m)^\lambda u \leq (2m)^\lambda (u-v)\leq N_m|u-v| &\text{ if}\quad u\geq0, v < 0,\\
\,\,v^{1+\lambda}h_m(v) \leq (2m)^\lambda v \leq (2m)^\lambda (v-u)\leq N_m|u-v| &\text{ if}\quad u<0, v\geq0,\\
\,\,0\leq N_m|u-v| &\text{ if}\quad u,v < 0.
\end{cases}
\end{aligned}
\end{equation}
Thus, Lemma \ref{prop_of_bessel_space} \eqref{bounded_operator}, Remark \ref{Kernel}, and Minkowski's inequality implies that for $u,v\in \bH_p^{1/2-\kappa}(\tau)$,
\begin{equation}
\begin{aligned}
&\left\| \left((u_+(t,\cdot))^{1+\lambda} h_m(u(t,\cdot))\right)_{x} - \left((v_+(t,\cdot))^{1+\lambda} h_m(v(t,\cdot))\right)_{x} \right\|^p_{H_p^{-3/2-\kappa}} \\
&\quad \leq N\left\| (u_+(t,\cdot))^{1+\lambda} h_m(u(t,\cdot)) - (v_+(t,\cdot))^{1+\lambda} h_m(v(t,\cdot)) \right\|^p_{H^{-1/2-\kappa}_p} \\
&\quad\leq N\int_{\bR} \left( \int_{\bR} \left| R_{1/2+\kappa}(y) \right| \left| \left(u_+^{1+\lambda}h_m(u) - v_+^{1+\lambda}h_m(v)\right)(t,x-y) \right| dy\right)^p dx \\
&\quad \leq N_m\left(\int_\bR \left| R_{1/2+\kappa}(y) \right| dy\right)^{p}\int_\bR |u(t,x) - v(t,x)|^p dx \\
\end{aligned}
\end{equation}
and
\begin{equation} \label{l_2_computation}
\begin{aligned}
&\left\| \sigma(u)\eta - \sigma(v)\eta \right\|_{H_p^{-1/2-\kappa}(\ell_2)}^p \\
& \leq \int_\bR  \left( \sum_k \left( \int_\bR \left| R_{1/2+\kappa}(x-y) \right|((\sigma(u) - \sigma(v))(s,y)\eta_k(y) dy \right)^2 \right)^{\frac{p}{2}} dx \\
& \leq \int_\bR  \left( \int_\bR \left|R_{1/2+\kappa}(x-y)\right|^2(\sigma(s,y,u(s,y)) - \sigma(s,y,v(s,y)))^2 dy \right)^{p/2} dx \\
& \leq \int_\bR  \left( \int_\bR \left|R_{1/2+\kappa}(y)\right|^2(u(s,x-y) - v(s,x-y))^2 dy \right)^{p/2} dx \\
& \leq \left( \int_\bR \left| R_{1/2+\kappa}(y) \right|^2 dy \right)^{p/2}\int_\bR  |u(s,x) - v(s,x)|^p  dx \\
\end{aligned}
\end{equation}
on $(\omega,t)\in\opar0,\tau\cbrk$. Since Remark \ref{Kernel} implies 
$$\int_\bR \left|R_{1/2+\kappa}(y)\right| dy + \int_\bR \left| R_{1/2+\kappa}(y) \right|^2 dy<\infty,
$$ 
by taking integration with respect to $(\omega,t)$ and applying Lemma \ref{prop_of_bessel_space} \eqref{multi_ineq}, we have
\begin{equation*}
\begin{aligned}
&\left\| \left(u_+^{1+\lambda} h_m(u)\right)_{x} - \left(v_+^{1+\lambda} h_m(v)\right)_{x} \right\|^p_{\bH_p^{-3/2-\kappa}(\tau)} + \left\| \sigma(u)\eta - \sigma(v)\eta \right\|_{\bH_p^{-1/2-\kappa}(\tau,\ell_2)}^p \\
&\quad\leq  N_m\|u-v\|_{\bL_p(\tau)}^p \\
&\quad\leq  \ep\|u-v\|_{\bH_p^{1/2-\kappa}(\tau)}^p + N_\ep\|u-v\|_{\bH_p^{-3/2-\kappa}(\tau)}^p
\end{aligned}
\end{equation*}
for any $\ep>0$. Therefore, Theorem \ref{theorem_nonlinear_case} yields that there exists $u_m\in \cH_p^{1/2-\kappa}(\tau)$ satisfying equation \eqref{cut_off_equation_large_lambda_STWN}.

To prove $u_m\geq0$, take a sequence of functions $\{u^n_0 \in  U_p^{1}:u^n_0\geq 0,n\in\bN\}$ such that $u^n_0 \to u_0$ in $U^{1/2-\kappa}_{p}$. By Theorem \ref{theorem_nonlinear_case}, there exists a unique solution  $u^n_m\in \cH_p^1(\tau)$ satisfying equation
\begin{equation*}
du^n_m = \left(au_{mxx}^n + b u_{mx}^n + cu_m^n +  \bar{b}\left((u^n_{m+})^{1+\lambda} h_m(u_m^n)\right)_{x}  \right) dt + \sum_{k = 1}^n\sigma(u_m^n)\eta_k dw_t^k
\end{equation*}
on $(t,x)\in(0,\tau)\times\bR$ with initial data $u_m^n(0,\cdot) = u^n_0(\cdot)$.
Again, by Theorem \ref{theorem_nonlinear_case},
\begin{equation*}
\begin{aligned}
&\|u_m - u_m^n\|_{\cH_p^{1/2-\kappa}(\tau\wedge t)}^p - N\|u_0 - u_0^n\|_{U_p^{1/2-\kappa}}^p\\
&\quad \leq N\left\| \bar{b}\left((u_{m+})^{1+\lambda} h_m(u_m)\right)_{x} - \bar{b}\left((u_{m+}^n)^{1+\lambda} h_m(u_{m}^n)\right)_{x} \right\|_{\bH_p^{-3/2-\kappa}(\tau\wedge t)}^p \\
&\quad\quad+ N\left\| \sigma(u_{m})\eta - \sigma(u_{m}^n)\eta1_{k\leq n} \right\|_{\bH^{-1/2-\kappa}_p(\tau\wedge t,\ell_2)}^p \\
&\quad\leq N_m\left\| u_m - u_m^n \right\|_{\bH_p^{-1/2-\kappa}(\tau\wedge t)}^p + N\left\| \left(\sigma(u_{m}) - \sigma(u_{m}^n)\right)\eta1_{k\leq n} \right\|_{\bH^{-1/2-\kappa}_p(\tau\wedge t,\ell_2)}^p \\
&\quad\quad+ N\left\| \sigma(u_{m})\eta1_{k > n} \right\|_{\bH^{-1/2-\kappa}_p(\tau\wedge t,\ell_2)}^p \\
&\quad\leq N_m \| u_{m} - u_{m}^n \|_{\bL_p(\tau\wedge t)}^p  + N\left\| \sigma(u_{m})\eta1_{k > n} \right\|_{\bH^{-1/2-\kappa}_p(\tau,\ell_2)}^p,
\end{aligned}
\end{equation*}
where $N = N(m,d,p,\kappa,K,T)$. Then, Theorem \ref{embedding} \eqref{gronwall_type_ineq} implies
\begin{equation*}
\|u_m - u_m^n\|_{\cH_p^{1/2-\kappa}(\tau\wedge t)}^p \leq N \|u_0 - u_0^n\|_{U_p^{1/2-\kappa}}^p + N\left\| \sigma(u_{m})\eta 1_{k > n} \right\|_{\bH^{-1/2-\kappa}_p(\tau,\ell_2)}^p,
\end{equation*}
where $N = N(m,d,p,\kappa,K,T)$. Note that
\begin{equation*}
\begin{aligned}
&\| \sigma(u_m)\eta 1_{k>n} \|^p_{H_{p}^{-1/2-\kappa}(\ell_2)}  \\
&\quad \leq N  \int_{\bR} \left( \sum_{k>n} \left[ \int_{\bR}R_{1/2+\kappa}(x-y) \sigma(t,y,u_m(t,y)) \eta_k(y) dy \right]^2 \right)^{p/2} dx.
\end{aligned}
\end{equation*}
Also,
\begin{equation*}
\begin{aligned}
&\sum_{k>n} \left[ \int_{\bR}R_{1/2+\kappa}(x-y) \sigma(t,y,u_m(t,y)) \eta_k(y) dy \right]^2 \\
&\quad\leq \sum_{k} \left[ \int_{\bR}R_{1/2+\kappa}(x-y) \sigma(t,y,u_m(t,y)) \eta_k(y) dy \right]^2 \\
&\quad\leq  \int_{\bR}R^2_{1/2+\kappa}(x-y) | \sigma(t,y,u_m(t,y))|^2  dy  \\
&\quad\leq  K^2\int_{\bR}R^2_{1/2+\kappa}(y) | u_m(t,x-y)|^2  dy.
\end{aligned}
\end{equation*}
Thus, by the dominated convergence theorem, we have
\begin{equation*}
\left\| \sigma(t,\cdot,(u_m(t,\cdot)) \eta 1_{k>n} ) \right\|^p_{H_{p}^{-1/2-\kappa}(\ell_2)} \to 0 \quad \text{as}\quad n\to \infty
\end{equation*}
for almost all $(\omega,t)$. Then, by using the dominated convergence theorem again, we have
$$\|  \sigma(t,\cdot,(u_m(t,\cdot)) \eta 1_{k>n} ) \|^p_{\mathbb{H}_{p}^{-1/2-\kappa}(\tau,\ell_2)} \to 0 \quad \text{as}\quad n\to \infty.
$$ 
Thus, it suffices to show that $u^n_{m}\geq0$. Since $u^n_{m}\in\cH_p^1(\tau)$,  By applying \cite[Theorem 2.5]{krylov2007maximum} with $f = \bar b\left((u^n_{m+})^{1+\lambda}h_m(u^n_m)\right)_{x}$, we have $u^n_{m}\geq0$. 
The lemma is proved.

\qed
\end{proof}

The following lemma provides $L_1$ bound of local solution $u_m$ on $\Omega\times(0,T)\times \bR$. This bound is used to control the noise-related part of the local solution; see \eqref{noise_cancelling_estimate}. Recall that $h(\cdot)\in C_c^\infty(\bR)$ is a nonnegative function satisfying
$h(z) = 1$ on $|z|\leq 1$ and $h(z) = 0$ on $|z|\geq2$. Also, $h_m$ is defined as
$$ h_m(z) := h(z/m).
$$

\begin{lemma}
\label{L1_bound}
Suppose all the assumptions of Lemma \ref{cut_off_lemma_large_lambda_STWN} hold for $\tau = T$ and we assume that $u_0\in U_{p}^{1/2-\kappa}\cap L_1(\Omega;L_1)$. Let $u_m\in \cH_p^{1/2-\kappa}(T)$ be the solution to equation \eqref{cut_off_equation_large_lambda_STWN} introduced in Lemma \ref{cut_off_lemma_large_lambda_STWN}. Then, we have
\begin{equation} \label{L_1_bound_STWN_1}
\bE\int_0^T \| u_m(t,\cdot) \|_{L_1}dt \leq N \| u_0 \|_{L_1(\Omega;L_1)}
\end{equation}
where $N = N(K,T)$.
\end{lemma}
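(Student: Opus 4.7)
The approach is to test the weak form of \eqref{cut_off_equation_large_lambda_STWN} against a spatial cutoff $\phi_n\nearrow 1$, take expectation to eliminate the stochastic integral, and pass to $n\to\infty$ via monotone convergence (using $u_m\geq 0$ from Lemma \ref{cut_off_lemma_large_lambda_STWN}). I would pick $\phi\in C_c^\infty(\bR)$ with $0\leq\phi\leq 1$, $\phi\equiv 1$ on $[-1,1]$, $\operatorname{supp}\phi\subset[-2,2]$, monotone on $[0,\infty)$, and set $\phi_n(x):=\phi(x/n)$. Writing $F_n(t):=\bE(u_m(t,\cdot),\phi_n)$, monotone convergence will give $F_n(t)\nearrow\bE\|u_m(t,\cdot)\|_{L_1}$, so it suffices to bound $F_n$ uniformly in $n$.

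Applying the weak formulation of Definition \ref{definition_of_sol_space} with the test function $\phi_n$ and integrating by parts yields
\[
(u_m(t),\phi_n)=(u_0,\phi_n)+\int_0^t\!\!\int u_m\bigl[(a\phi_n)_{xx}-(b\phi_n)_x+c\phi_n\bigr]dx\,ds-\int_0^t\!\bar b(s)\!\int u_m^{1+\lambda}h_m(u_m)\,\phi_{n,x}\,dx\,ds+M_t^{(n)},
\]
with $M_t^{(n)}:=\sum_k\int_0^t(\sigma(u_m)\eta_k,\phi_n)\,dw^k_s$. The $x$-independence of $\bar b$ is used here: it reduces the nonlinear term to a boundary-type integral against $\phi_{n,x}$ only, eliminating any $\bar b_x\phi_n$-type contribution that would survive as $\phi_n\to 1$. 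By Parseval's identity with respect to the $L_2$-orthonormal basis $\{\eta_k\}$, $\sum_k(\sigma(u_m)\eta_k,\phi_n)^2=\|\sigma(u_m)\phi_n\|_{L_2}^2\leq K^2\|\phi_n\|_{L_2}^2$, so by Assumption \ref{stochastic_part_assumption_on_coeffi_space_time_white_noise_large_lambda} $M^{(n)}$ is a genuine $L^2$-martingale and $\bE M^{(n)}_t=0$.

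Next, taking expectation and exploiting $u_m\geq 0$, the uniform bounds $|a|,|a_x|,|a_{xx}|,|b|,|b_x|,|c|,|\bar b|\leq K$, $0\leq h_m\leq 1$ with $\operatorname{supp}h_m(u_m)\subset\{|u_m|\leq 2m\}$, and the resulting inequality $u_m^{1+\lambda}h_m(u_m)\leq(2m)^\lambda u_m$, I expect an estimate
\[
F_n(t)\leq F_n(0)+N\int_0^t F_n(s)\,ds+\frac{N_m}{n}\bE\int_0^t\!\!\int_{n\leq|x|\leq 2n}u_m(s,x)\,dx\,ds,
\]
with $N=N(K)$ independent of $m,n$. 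The last ``annulus remainder'' collects precisely the terms carrying factors $\phi_{n,x}$ or $\phi_{n,xx}$, which are supported in $n\leq|x|\leq 2n$ with size $O(1/n)$ and $O(1/n^2)$, respectively.

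The key—and only nontrivial—step is showing the annulus remainder tends to $0$ as $n\to\infty$: only the $\bL_p$-regularity of $u_m$ is available a priori. H\"older's inequality gives $\int_{n\leq|x|\leq 2n}u_m\,dx\leq(4n)^{1-1/p}\|u_m(s,\cdot)\|_{L_p}$, so the remainder is bounded by $N_m\,n^{-1/p}T^{1-1/p}\|u_m\|_{\bL_p(T)}$, which vanishes as $n\to\infty$ since $u_m\in\cH_p^{1/2-\kappa}(T)\subset\bL_p(T)$. Gronwall's inequality then yields $F_n(t)\leq(F_n(0)+R_n)e^{NT}$, and since $F_n(0)\leq\bE\|u_0\|_{L_1}=\|u_0\|_{L_1(\Omega;L_1)}$, passing to the limit via monotone convergence gives $\bE\|u_m(t,\cdot)\|_{L_1}\leq e^{NT}\|u_0\|_{L_1(\Omega;L_1)}$. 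Integrating in $t$ over $[0,T]$ produces \eqref{L_1_bound_STWN_1} with $N=N(K,T)$; crucially, the $m$-dependent constant $N_m$ appears only inside the vanishing remainder and therefore does not contaminate the final bound.
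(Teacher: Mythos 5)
Your proposal is correct and follows essentially the same route as the paper: test against a spatial cutoff $\phi_n$, take expectations to kill the stochastic integral, absorb the nonlinear term into the $O(1/n)$ cutoff-derivative remainder via $u_m^{1+\lambda}h_m(u_m)\le(2m)^{\lambda}u_m$, and make that remainder vanish as $n\to\infty$ by H\"older's inequality against $\|u_m\|_{\bL_p(T)}$, so the $m$-dependent constant disappears in the limit. The only cosmetic difference is that the paper multiplies by the discount factor $e^{-4Ks}$ so the zeroth-order coefficient becomes negative and the corresponding term can be dropped by nonnegativity of $u_m$, where you instead invoke Gr\"onwall's inequality; the two devices are interchangeable here.
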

\begin{proof}

Note that $h_k(x) = h(x/k) \in C_c^\infty(\bR)$ and $h_k(x)\to1$ as $k\to \infty$. For a bounded stopping time $\tau \leq T$ and $k\in\bN$, by employing It\^o's formula, expectation, and  integration by parts, we have
\begin{equation}
\label{L1_bound_calculation_1}
\begin{aligned}
&\bE e^{-4K\tau}\left(u_m(\tau ,\cdot),h_k  \right)\\
& = \bE(u_0, h_k) + k^{-2}\bE\int_0^\tau\int_{\bR} u_m(s,x)a(s,x)h_{xx}(x/k)e^{-4Ks}dxds \\
& + k^{-1}\bE\int_0^\tau\int_{\bR} u_m(s,x)\left( 2a_{x} - b - \bar b (u_{m+})^{\lambda} h_m(u_m)  \right)(s,x) h_x(x/k)e^{-4Ks}dxds \\
& + \bE\int_0^\tau\int_{\bR} u_m(s,x) \left( a_{xx}(s,x) - b_x(s,x) + c(s,x) - 4K \right) h(x/k) e^{-4Ks}dxds.
\end{aligned}
\end{equation}
Since $u_0\in L_1(\Omega;L_1)$ and \eqref{boundedness_of_deterministic_coefficients_space-time_white_noise}, we have
\begin{equation}
\label{L1_bound_calculation_2}
\begin{aligned}
&e^{-4KT}\bE(u_m(\tau,\cdot),h_k) \\
&\quad\leq \bE\int_\bR u_0(x)dx + k^{-2}N\bE\int_0^\tau \int_{\bR} |u_m(s,x)| |h_{xx}(x/k)|  dxds \\
&\quad\quad\quad + k^{-1}N_m\bE\int_0^\tau \int_{\bR} |u_m(s,x)| |h_x(x/k)| dxds.
\end{aligned}
\end{equation}
Let $q := \frac{p}{p-1}$. By H\"older's inequality and changing of variable, we have
\begin{equation*}
\begin{aligned}
&k^{-2}\bE\int_0^\tau\int_{\bR} |u_m(s,x)| |h_{xx}(x/k)| dxds \\
&\quad \leq k^{-2}\| u_m \|_{\bL_p(\tau)} \left(\int_{\bR} |h_{xx}(x/k)|^q dx\right)^{1/q} \\
&\quad \leq k^{-2+1/q}N_m
\end{aligned}
\end{equation*}
and
\begin{equation*}
\begin{aligned}
k^{-1}\bE\int_0^\tau\int_{\bR} |u_m(s,x)||h_{x}(x/k)| dxds &\leq k^{-1}\bE\int_0^\tau\int_{\bR} |u_m(s,x)||h_x(x/k)| dxds\\
&\leq k^{-1}\| u_m \|_{\bL_p(\tau)} \| h_x(\cdot/k) \|_{L_q} \\
&\leq k^{-1+1/q}N_m \| h_x \|_{L_{q}}\\
&\leq k^{-1+1/q}N_m.
\end{aligned}
\end{equation*}
Therefore, we have
\begin{equation} \label{two kinds of L1bound}
\bE(u_m(\tau,\cdot),h_k) \leq \bE\| u_0 \|_{L_1(\Omega\times\bR)} e^{4KT}+ k^{-1+1/q}  N_me^{4KT},
\end{equation}
where $N_M$ is independent of $k$. By letting $\tau = t$ and taking integration on $t\in(0,T)$, we have
\begin{equation*}
\bE\int_0^T(u_m(t,\cdot),h_k)dt \leq T\bE\int_\bR u_0(x)dx e^{4KT}+ k^{-1+1/q}N_mT e^{4KT},
\end{equation*}
where $N_{m}$ is independent of $k$. By letting $k\to\infty$, we have
\begin{equation*}
\bE \int_0^T \| u_m(s,\cdot) \|_{L_1}ds \leq N(K,T)\bE\| u_0 \|_{L_1}.
\end{equation*}
The lemma is proved.
\qed
\end{proof}

\begin{remark} \label{def_u}
Now we introduce a global solution candidate. Let $T\in\bN$, $\kappa\in(0,1/2)$, $p > \frac{6}{1-2\kappa}$ and $m\in\bN$. Suppose $u_m\in \cH_p^{1/2-\kappa}(T)$ is the solution introduced in Lemma \ref{cut_off_lemma_large_lambda_STWN}. By Corollary \ref{embedding_corollary}, we have $u_m\in C([0,T];C(\bR))$ (a.s.).
Thus, for $R\in\{ 1,2,\dots,m \}$, we can set $\tau_m^R$
\begin{equation} 
\label{stopping_time_taumm}
\tau_m^R := \inf\left\{  t\in[0,T] : \sup_{x\in\bR} |u_m(t,x)|\geq R \right\}.
\end{equation}
Note that 
\begin{equation}
\label{local_existence_time_increasing}
\tau_R^R \leq \tau_m^m.
\end{equation}
Indeed, if $R = m$, \eqref{local_existence_time_increasing} is obvious. If $R<m$, we have $u_m\wedge m=u_m\wedge m\wedge R  = u_m \wedge R$ for $t\leq\tau_m^R$. Therefore, $u_m$ and $u_R$ satisfy
\begin{equation*}
du = \left(au_{xx} + b u_{x} + cu +  \bar{b}\left(u^{1+\lambda}h_R(u) \right)_{x}  \right) dt + \sigma(u)\eta_k dw_t^k, \quad0<t\leq\tau_m^R
\end{equation*}
with the initial data $u(0,\cdot)=u_0$.
On the other hand, $u_R\wedge R = u_R\wedge R\wedge m = u_R\wedge m$ for $t\leq \tau_R^R$. Thus, $u_m$ and $u_R$ satisfy
\begin{equation*}
du = \left(au_{xx} + b u_{x} + cu +  \bar{b}\left(u^{1+\lambda}h_m(u) \right)_{x}  \right) dt + \sigma(u)\eta_k dw_t^k, \quad0<t\leq\tau_R^R
\end{equation*}
with the initial data $u(0,\cdot)=u_0.$
Notice that the uniqueness result in Lemma \ref{cut_off_lemma_large_lambda_STWN} yields that $u_m=u_R$ for all $t \leq (\tau_m^R\vee\tau_R^R) \wedge T$, for any positive integer $T$. Also, note that $\tau_R^R=\tau_m^R$ (a.s.).  Indeed, for $t<\tau_m^R$, 
\begin{equation*}
\sup_{s\leq t}\sup_{x\in\bR}|u_R(s,x)|=\sup_{s\leq t}\sup_{x\in\bR}|u_m(s,x)|\leq R,
\end{equation*}
and this implies  $\tau_m^R\leq\tau_R^R$. Similarly, $\tau_m^R\geq\tau_R^R$. Besides, we have $\tau_m^R \leq \tau_m^m$ since $m > R$. Therefore, $ \tau_R^R \leq \tau_m^m$. 

Define
\begin{equation*}
u := u_{m}1_{[0,\tau_m^m\wedge T]}(t)
\end{equation*}
and set $\tau_\infty := \limsup_{m\to\infty}\limsup_{T\to\infty}\tau_m^m\wedge T$. It should be remarked that $u(t,x)$ is well-defined on $\Omega\times[0,\infty)\times\bR$ and the nontrivial domain of $u$ is $\Omega\times[0,\tau_\infty)\times\bR$.

\end{remark}

The following lemma introduces a noise dominating part of the solution and the estimate of it.

\begin{lemma} \label{noise_cancelling_lemma}
Let $T\in(0,\infty)$, $\kappa\in(0,1/2)$, and $p>\frac{6}{1-2\kappa}$. If $u_0\in U_p^{1/2-\kappa}\cap L_1(\Omega;L_1)$, there exists $v \in \cH_p^{1/2-\kappa}(T)$ such that 
\begin{equation} \label{noise_cancelling_eq}
dv = \left(av_{xx} + b v_{x} + cv \right) dt + \sigma(u)\eta_k dw_t^k,\quad (t,x)\in(0,T)\times\bR\,; \quad v(0,\cdot) = u_0.
\end{equation}
Furthermore, we have
\begin{equation} \label{noise_cancelling_estimate}
\bE\sup_{t\leq T,x\in\bR}|v(t,x)|^p \leq N(p,\kappa,K,T)\left(\|u_0\|_{U_p^{1/2-\kappa}}^p + \bE\| u_0 \|_{L_1}\right)
\end{equation}
and
\begin{equation} \label{noise_cancelling_estimate_2}
\bE\sup_{t\leq T}\|v(t,\cdot)\|^{1/2}_{L_1} \leq  N(K,T)\bE\| u_0 \|^{1/2}_{L_1}.
\end{equation}
\end{lemma}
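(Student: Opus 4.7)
The plan is to combine the linear $L_p$-theory with a trade-off that exchanges the global $L_\infty$ bound on $\sigma(u)$ for its sublinear bound $|\sigma(u)|\leq K(|u|\wedge 1)$, and then to chain this with the H\"older embedding of Corollary \ref{embedding_corollary} and a cutoff/It\^o-BDG computation.

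\textbf{Step 1 (existence and $\cH_p^{1/2-\kappa}$ bound).} The boundedness of $\sigma$ from Assumption \ref{stochastic_part_assumption_on_coeffi_space_time_white_noise_large_lambda} places $\sigma(u)\eta$ in $\bH_p^{-1/2-\kappa}(T,\ell_2)$, so equation \eqref{noise_cancelling_eq} is covered by Theorem \ref{theorem_nonlinear_case} applied with $\gamma=-3/2-\kappa$, $f\equiv 0$, and $g^k=\sigma(u)\eta_k$. This yields the unique $v\in\cH_p^{1/2-\kappa}(T)$ together with
$$
\|v\|_{\cH_p^{1/2-\kappa}(T)}^p \leq N\bigl(\|\sigma(u)\eta\|_{\bH_p^{-1/2-\kappa}(T,\ell_2)}^p + \|u_0\|_{U_p^{1/2-\kappa}}^p\bigr).
$$
To bound the forcing norm I mimic the Bessel-kernel calculation \eqref{l_2_computation} from the proof of Lemma \ref{cut_off_lemma_large_lambda_STWN}, obtaining for a.e.\ $(\omega,t)$
$$
\|\sigma(u(t,\cdot))\eta\|_{H_p^{-1/2-\kappa}(\ell_2)}^p \leq N\int_\bR |\sigma(t,x,u(t,x))|^p\,dx.
$$
The crucial step is to exploit the full bound $|\sigma|\leq K(|u|\wedge 1)$: for $p\geq 1$ this gives $|\sigma|^p\leq K^p(|u|\wedge 1)^p\leq K^p(|u|\wedge 1)\leq K^p|u|$, so the right-hand side is controlled by $N\|u(t,\cdot)\|_{L_1}$. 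Integrating in $(\omega,t)$ and applying the $L_1$ bound of Lemma \ref{L1_bound} (which applies to the pasted solution $u$ from Remark \ref{def_u} since $u\equiv u_m$ on $\opar0,\tau_m^m\cbrk$) yields $\|\sigma(u)\eta\|_{\bH_p^{-1/2-\kappa}(T,\ell_2)}^p\leq N\bE\|u_0\|_{L_1}$, hence $\|v\|_{\cH_p^{1/2-\kappa}(T)}^p\leq N(\|u_0\|_{U_p^{1/2-\kappa}}^p+\bE\|u_0\|_{L_1})$.

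\textbf{Step 2 (pointwise supremum).} The hypothesis $p>6/(1-2\kappa)$ is precisely what guarantees the existence of $\alpha,\beta$ satisfying \eqref{condition_for_alpha_beta}; fixing such a pair and choosing $\delta$ small enough that $1/2-\kappa-2\beta-1/p-\delta\geq 0$, Corollary \ref{embedding_corollary} delivers
$$
\bE\sup_{t\leq T,\,x\in\bR}|v(t,x)|^p \leq N\|v\|_{\cH_p^{1/2-\kappa}(T)}^p,
$$
which combined with Step 1 establishes \eqref{noise_cancelling_estimate}.

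\textbf{Step 3 ($L_1$-supremum bound and main obstacle).} For \eqref{noise_cancelling_estimate_2} I would test \eqref{noise_cancelling_eq} against the cutoff $h_k\uparrow 1$ and apply It\^o's formula as in the proof of Lemma \ref{L1_bound}. After integration by parts the drift contribution is bounded by $N(|v|,h_k)$ using \eqref{boundedness_of_deterministic_coefficients_space-time_white_noise}, and the martingale part $M_t=\int_0^t(\sigma(u)\eta_j,h_k)\,dw_s^j$ has Parseval quadratic variation
$$
\langle M\rangle_t = \int_0^t \|\sigma(u(s,\cdot))h_k\|_{L_2}^2\,ds \leq K^2\int_0^t\|u(s,\cdot)\|_{L_1}\,ds
$$
via the same sublinear bound as in Step 1. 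The main technical obstacle is that $v$ is not sign-definite, so $(v,h_k)$ does not coincide with $\|v\|_{L_1}$; I expect to handle this either by splitting $v=v_1+v_2$ into a deterministic heat part $v_1$ (nonnegative by the maximum principle, with $\|v_1(t)\|_{L_1}\leq N\|u_0\|_{L_1}$ pathwise via the adjoint parabolic bound) and a zero-initial-data stochastic part $v_2$, or by applying It\^o to the smoothed absolute value $\sqrt{v^2+\epsilon^2}$ and letting $\epsilon\downarrow 0$. Once the sign issue is resolved, BDG at exponent $1/2$ combined with Jensen and the $L_1$ bound of Lemma \ref{L1_bound} recovers the desired $\bE\|u_0\|_{L_1}^{1/2}$ scaling.
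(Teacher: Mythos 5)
Your Steps 1 and 2 reproduce the paper's own argument. The paper likewise invokes Theorem \ref{theorem_nonlinear_case} with $f\equiv 0$, reduces $\|\sigma(u)\eta\|_{\bH_p^{-1/2-\kappa}(T,\ell_2)}^p$ to $\bE\int_0^T\|u_m(s,\cdot)\|_{L_1}\,ds$ by the kernel/Minkowski computation (it uses $|\sigma(u)|^2\le K^2|u|^{2/p}$ where you use $|\sigma(u)|^p\le K^p|u|$ -- the same idea), passes through the local solutions $u_m$ and Fatou's lemma over $\tau_m^m\wedge T$ exactly as you note parenthetically, and then concludes with Lemma \ref{L1_bound}, Theorem \ref{embedding} \eqref{large-p-embedding} and Corollary \ref{embedding_corollary}. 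Up to and including \eqref{noise_cancelling_estimate} your proposal is correct and follows the same route.

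Step 3 is where your argument is not closed, and the obstacle you flag is genuine. The paper's proof of \eqref{noise_cancelling_estimate_2} simply repeats the computation of Lemma \ref{L1_bound} to obtain $\bE(v(\tau,\cdot),h_k)\le N\bE\|u_0\|_{L_1}+o_k(1)$ for every bounded stopping time $\tau\le T$ and then cites \cite[Theorem III.6.8]{diffusion}; both steps tacitly use a sign for $v$. Indeed, the proof of Lemma \ref{L1_bound} discards the zeroth-order term $\int u_m\,(a_{xx}-b_x+c-4K)h_k\,e^{-4Ks}\le 0$ precisely because $u_m\ge 0$, the cited maximal inequality applies to nonnegative (super)martingale-type processes, and the identification $(v,h_k)\uparrow\|v\|_{L_1}$ also needs $v\ge0$. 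So you have correctly located a point the paper glosses over. However, neither of your proposed repairs works as stated. The splitting $v=v_1+v_2$ only helps with the deterministic part $v_1$; the zero-initial-data part $v_2$ carries all of the stochastic forcing, is not sign-definite, and it is exactly its $L_1$ norm that must be controlled. The regularized Tanaka route $\sqrt{v^2+\delta^2}$ is also problematic under space-time white noise: for fixed $x$ the process $t\mapsto v(t,x)$ is not a semimartingale, so one must first mollify in space, and then the It\^o correction $\tfrac12\,\delta^2(v^2+\delta^2)^{-3/2}\sum_k|(\sigma(u)\eta_k)^{(\ep)}(x)|^2$ involves $\sum_k|(\sigma(u)\eta_k)^{(\ep)}|^2\sim \ep^{-1}$, and it is not clear that this contribution disappears in the limits $\delta,\ep\downarrow 0$. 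Also note that a pathwise splitting through $w_m=u_m-v$ would be circular, since Lemma \ref{Lq_bound_2} bounds $w_m$ in terms of $\sup_{t\le\tau}\|v(t,\cdot)\|_{L_1}$. As written, \eqref{noise_cancelling_estimate_2} is asserted rather than proved in your proposal; a complete argument must either establish a Kato-type inequality for $|v|$ rigorously at the mollified level or otherwise reduce the claim to nonnegative quantities.
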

\begin{proof}
Let $T\in(0,\infty)$. First, we show that there exists $v\in\cH_p^{1/2-\kappa}(T)$ satisfying \eqref{noise_cancelling_eq} and \eqref{noise_cancelling_estimate}. To employ Theorem \ref{theorem_nonlinear_case}, we need to show
\begin{equation*}
\left\| \sigma(u)\eta  \right\|_{\bH_p^{-1/2-\kappa}(T,\ell_2)}^p <\infty.
\end{equation*}
Set $\tau_m^m$ as in \eqref{stopping_time_taumm}. Then, Remark \ref{Kernel}, Remark \ref{def_u}, and Fatou's lemma yield
\begin{equation} \label{noise_estimate_STWN_1}
\begin{aligned}
&\left\| \sigma(u)\eta  \right\|_{\bH_p^{-1/2-\kappa}(T,\ell_2)}^p \\
& = \bE\int_0^T\left\| |(1-\Delta)^{1/2+\kappa}\left(\sigma(s,u(s))\eta\right)(\cdot)|_{\ell_2}  \right\|_{L_p}^p  ds\\
& \leq \bE\int_0^{\tau_\infty\wedge T}\int_\bR  \left( \sum_k \left( \int_\bR R_{1/2+\kappa}(x-y)\sigma(s,y,u(s,y)) \eta_k(y) dy \right)^2 \right)^{p/2} dxds \\
& = \bE\int_0^{\tau_\infty\wedge T}\int_\bR  \left( \int_\bR \left|R_{1/2+\kappa}(x-y)\right|^2(\sigma(s,y,u(s,y)) )^2 dy \right)^{p/2} dxds \\
& \leq \liminf_{m\to\infty}\bE\int_0^{\tau_m^m\wedge T}\int_\bR  \left( \int_\bR \left|R_{1/2+\kappa}(x-y)\right|^2(\sigma(s,y,u_m(s,y)) )^2 dy \right)^{p/2} dxds \\
\end{aligned}
\end{equation}
Also, by Remark \ref{Kernel}, \eqref{boundedness_of_stochastic_coefficients_large_lambda_STWN}, Minkowski's inequality, and \eqref{L_1_bound_STWN_1}, we have
\begin{equation} \label{noise_estimate_STWN_2}
\begin{aligned}
&\bE\int_0^{\tau_m^m\wedge T}\int_\bR  \left( \int_\bR \left|R_{1/2+\kappa}(x-y)\right|^2(\sigma(s,y,u_m(s,y)) )^2 dy \right)^{p/2} dxds \\
& \leq K^{p} \bE\int_0^{\tau_m^m\wedge T} \int_\bR  \left( \int_\bR \left|R_{1/2+\kappa}(y)\right|^2(u_m(s,x-y) )^{2/p} dy \right)^{p/2} dxds \\
& \leq K^{p}\left( \int_\bR \left|R_{1/2+\kappa}(y)\right|^2dy \right)^{p/2}\bE\int_0^{\tau_m^m\wedge T} \int_\bR  |u_m(s,x)|  dxds \\
& \leq  N\| u_0 \|_{L_1(\Omega;L_1)},
\end{aligned}
\end{equation}
where $N = N(p,\kappa,K,T)$. Since $N$ is independent of $m$, \eqref{noise_estimate_STWN_1} and \eqref{noise_estimate_STWN_2} imply
\begin{equation}
\label{noise_estimate_STWN_3}
\left\| \sigma(u)\eta  \right\|_{\bH_p^{-1/2-\kappa}(T,\ell_2)}^p  \leq  N\| u_0 \|_{L_1(\Omega;L_1)},
\end{equation}
where $N = N(p,\kappa,K,T)$.
Therefore, by Theorem \ref{theorem_nonlinear_case}, there exists $v\in \cH_p^{1/2-\kappa}(T)$ satisfying \eqref{noise_cancelling_eq} and 
\begin{equation*}
\| v \|_{\cH_p^{1/2-\kappa}(T)}^p \leq
 N\|u_0\|_{U_p^{1/2-\kappa}}^p + 
 N\| \sigma(u)\eta \|_{\bH_p^{-1/2-\kappa}(T,\ell_2)}^p,
\end{equation*}
where $N = N(p,\kappa,K,T)$.
Thus, by \eqref{noise_estimate_STWN_3} and Theorem \ref{embedding} \eqref{large-p-embedding}, we have \eqref{noise_cancelling_estimate}. 

To show \eqref{noise_cancelling_estimate_2}, follow the proof of Lemma \ref{L1_bound}. Then, instead of \eqref{two kinds of L1bound}, for any bounded stopping time $\tau \leq T$,
\begin{equation*}
\bE (v(\tau,\cdot),h_k) \leq T\bE\int_\bR u_0(x)dx e^{4KT}+ k^{-1+1/q}N e^{4KT},
\end{equation*}
where $N = N(m,p,K,T)$. Then, (e.g. \cite[Theorem III.6.8]{diffusion})
\begin{equation*}
\bE\sup_{t\leq T}\| v(t,\cdot) \|_{L_1}^{1/2} \leq N(K,T)\bE\| u_0 \|_{L_1}^{1/2}.
\end{equation*}
The lemma is proved.

\qed
\end{proof}

The following lemma provides an estimate of the nonlinear part of the local solution; $w_m := u_m-v$.

\begin{lemma} \label{Lq_bound_2}
Suppose all the assumptions on Theorem \ref{theorem_burgers_eq_large_lambda_STWN} hold. Assume that $u_m\in\cH_p^{1/2-\kappa}(T)$ and $v\in\cH_p^{1/2-\kappa}(T)$ are solutions introduced in Lemmas \ref{cut_off_lemma_large_lambda_STWN} and \ref{noise_cancelling_lemma}, respectively. Let $\tau_m^m$ be a stopping time defined as in \eqref{stopping_time_taumm}. Then, for any stopping time $\tau\leq \tau_m^m$,
\begin{equation} 
\label{Lq_bound_2_estimate}
\begin{aligned}
&\int_\bR |u_m(\tau\wedge t,x) - v(\tau\wedge t,x)|^p dx \\
&\leq N(p,K,T)\left(\sup_{t\leq \tau,x\in\bR}|v(t,x)|^{\frac{p+\lambda-2}{2-\lambda}} + \sup_{t\leq \tau,x\in\bR}|v(t,x)|^{\frac{p(1+\lambda)-2}{2}}\right)\sup_{t\leq \tau} \| v(t,\cdot) \|_{L_1} 
\end{aligned}
\end{equation}
for all $t\leq \tau$ almost surely.
\end{lemma}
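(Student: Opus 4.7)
The plan is to exploit that on $\{t\le\tau_m^m\}$ the noise terms in the equations for $u_m$ and $v$ exactly cancel, so that $w_m:=u_m-v$ satisfies a deterministic (pathwise in $\omega$) parabolic equation with zero initial data. Indeed, on this set $u_m\ge 0$ and $|u_m|\le m$, so the cut-off trivializes to $(u_{m+})^{1+\lambda}h_m(u_m)=u_m^{1+\lambda}$, and by construction of $u$ in Remark \ref{def_u} the diffusion in \eqref{noise_cancelling_eq} is driven by $\sigma(u)=\sigma(u_m)$. Consequently
\begin{equation*}
w_{m,t}=aw_{m,xx}+bw_{m,x}+cw_m+\bar b\bigl(u_m^{1+\lambda}\bigr)_{x},\qquad w_m(0,\cdot)\equiv 0,
\end{equation*}
with no stochastic integral, which is the structural observation enabling a classical $L_p$ energy method.

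For fixed $\omega$ I would apply the chain rule to $\tfrac{1}{p}\int_\bR|w_m|^p\,dx$ and integrate by parts. Using Assumption \ref{deterministic_part_assumption_on_coeffi_space-time_white_noise} together with Young's inequality, the parabolic terms produce the coercive dissipation $\cD(t):=\int_\bR|w_m|^{p-2}aw_{m,x}^2\,dx$ plus a remainder bounded by $N(p,K)\|w_m(t)\|_{L_p}^p$. Since $\bar b$ is independent of $x$, a single integration by parts on the nonlinear contribution gives
\begin{equation*}
\int_\bR|w_m|^{p-2}w_m\,\bar b\,(u_m^{1+\lambda})_x\,dx=-(p-1)\bar b\int_\bR|w_m|^{p-2}w_{m,x}\,u_m^{1+\lambda}\,dx.
\end{equation*}

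The heart of the argument is the next manipulation, in which both the chain rule and FTC enter via the algebraic condition $p-1+\lambda\in 2\bN$: since $p-1+\lambda$ is a nonnegative even integer, $|w_m|^{p-1+\lambda}=w_m^{p-1+\lambda}$ is an honest polynomial power of $w_m$ and
\begin{equation*}
\int_\bR w_m^{p-1+\lambda}\,w_{m,x}\,dx=\frac{1}{p+\lambda}\int_\bR\bigl(w_m^{p+\lambda}\bigr)_x\,dx=0
\end{equation*}
by FTC and the decay of $w_m$ at infinity. Writing $u_m=w_m+v$ and decomposing $u_m^{1+\lambda}$ into its leading $|w_m|^{1+\lambda}$ part plus a $v$-dependent remainder, the leading piece combines with $|w_m|^{p-2}w_{m,x}$ to reproduce exactly the vanishing integral above, while the remainders are of schematic form $\int|w_m|^{p-2}|w_{m,x}|\,|v|^{r}\,dx$ with $r\in\{1+\lambda,2(1+\lambda)\}$. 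Each remainder is handled by Young's inequality twice: first with a small $\epsilon$ to absorb $|w_m|^{p-2}w_{m,x}^2$ into $\cD$, and then in the form $|w_m|^{p-2}|v|^s\le \tfrac{p-2}{p}|w_m|^p+\tfrac{2}{p}|v|^{ps/2}$, where $s$ is chosen so that $ps/2\in\{p/(2-\lambda),\,p(1+\lambda)/2\}$; the elementary bound $\int|v|^{\alpha}\,dx\le\|v\|_\infty^{\alpha-1}\|v\|_{L_1}$ then produces precisely the two $v$-terms in \eqref{Lq_bound_2_estimate}, and the constraint $\lambda\le 1$ is what keeps these Young exponents admissible.

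Combining everything and using $w_m(0,\cdot)\equiv 0$, the energy identity reduces to a pathwise differential inequality
\begin{equation*}
\tfrac{d}{dt}\|w_m(t)\|_{L_p}^p\le N\|w_m(t)\|_{L_p}^p+N\Bigl(\sup_{s\le t,\,x\in\bR}|v|^{\frac{p+\lambda-2}{2-\lambda}}+\sup_{s\le t,\,x\in\bR}|v|^{\frac{p(1+\lambda)-2}{2}}\Bigr)\sup_{s\le t}\|v(s)\|_{L_1},
\end{equation*}
valid on $[0,\tau\wedge t]\subseteq[0,\tau_m^m]$, and Gr\"onwall's inequality then yields \eqref{Lq_bound_2_estimate} with $N=N(p,K,T)$ independent of $m$. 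The main obstacle I anticipate is the bookkeeping in the nonlinear step: one must organize the decomposition $u_m^{1+\lambda}=(w_m+v)^{1+\lambda}$ precisely enough that the dangerous $|w_m|^{p-1+\lambda}w_{m,x}$ contribution appears with exactly the sign structure making the FTC cancellation visible, and that the surviving powers of $v$ fit the two exponents in \eqref{Lq_bound_2_estimate}.
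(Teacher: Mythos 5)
Your proposal follows essentially the same route as the paper: the noise cancellation for $w_m=u_m-v$, an $L_p$ energy argument in which the term $\int_\bR w_m^{\,p-1+\lambda}w_{m,x}\,dx$ vanishes by the fundamental theorem of calculus thanks to $p-1+\lambda\in2\bN$, Young's inequality producing the exponents $p/(2-\lambda)$ and $p(1+\lambda)/2$ together with $\int|v|^{\alpha}dx\le\sup|v|^{\alpha-1}\|v\|_{L_1}$, and Gr\"onwall. The only substantive difference is that the paper first mollifies (working with $w_m^{(\ep)}$ obtained by testing against $\ep^{-1}\zeta((x-\cdot)/\ep)$ and sending $\ep\downarrow0$ at the end) to justify the chain rule and the appearance of $w_{m,x}$, which is necessary since $w_m$ only has spatial regularity $H_p^{1/2-\kappa}$ and is not weakly differentiable; you should incorporate that regularization step.
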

\begin{proof}
Let $\tau\leq \tau_m^m$ and set $w_m := u_m - v$. 
Then, for any $\phi\in \cS$, 
\begin{equation}
\label{the_other_part}
\begin{aligned}
(w_m(t,\cdot),\phi) &= \int_0^t (w_m(s,\cdot),(a(s,\cdot)\phi)_{xx}-(b(s,\cdot)\phi)_x+c(s,\cdot)\phi)ds\\
&\quad\quad-\int_0^t\bar b (s)\int_\bR (w_m(s,x)+v(s,x))^{1+\lambda}\phi_x dxds
\end{aligned}
\end{equation}
for all $t\leq \tau$ almost surely. Choose a nonnegative function $\zeta\in C_c^\infty(\bR)$ such that $\int_\bR \zeta(x)dx = 1$. For $x\in\bR$ and $\ep>0$, by plugging in $\phi(\cdot) := \frac{1}{\ep}\zeta\left(\frac{x-\cdot}{\ep}\right)$ to \eqref{the_other_part}, we have
\begin{equation*}
\begin{aligned}
w_m^{(\ep)}(t,x) &= \int_0^t  (aw_{m})_{xx}^{(\ep)}(s,x) -2(a_x w_m)^{(\ep)}_x(s,x) + (a_{xx}w_m)^{(\ep)}(s,x) ds \\
&\quad\quad-\int_0^t  (bw_m)_{x}^{(\ep)}(s,x) + (b_xw_m)^{(\ep)}(s,x) ds \\
&\quad\quad+\int_0^t (cw_m)^{(\ep)}(s,x) + \bar{b}(s) \left( (w_m(s,\cdot) + v(s,\cdot))^{1+\lambda} \right)^{(\ep)}_x(x)  ds
\end{aligned}
\end{equation*}
for all $t\leq \tau$ almost surely. Let $M\in\bN$ be specified later. Then, chain rule, integration with respect to $x$, and integration by parts imply
\begin{equation} \label{applying_chain_rule_noise_cancelling}
\begin{aligned}
& e^{-M\tau\wedge t}\int_{\bR}\left|w_m^{(\ep)}(\tau\wedge t,x)\right|^p dx \\
& = -p\int_0^{\tau\wedge t} \int_\bR \left(\left|w_m^{(\ep)}(s,x)\right|^{p-1}\right)_{x}  \left(aw_{m}\right)_x^{(\ep)}(s,x)   e^{-Ms} dxds \\
& - p\int_0^{\tau\wedge t} \int_\bR \left(\left|w_m^{(\ep)}(s,x)\right|^{p-1}\right)_{x}  \left( (2a_x - b)w_m \right)^{(\ep)}(s,x)  e^{-Ms} dxds \\
& + p\int_0^{\tau\wedge t} \int_\bR \left|w_m^{(\ep)}(s,x)\right|^{p-1} \left( (a_{xx} - b_x + c) w_m \right)^{(\ep)}(s,x)  e^{-Ms} dxds \\
& - \frac{1}{2}p\int_0^{\tau\wedge t} \int_\bR \left(\left|w_m^{(\ep)}(s,x)\right|^{p-1}\right)_{x} \left( (w_m(s,\cdot) + v(s,\cdot))^{1+\lambda} \right)^{(\ep)}(x)  \bar{b}(s)e^{-Ms} dxds \\
& - M\int_0^{\tau\wedge t} \int_\bR \left|w_m^{(\ep)}(s,x)\right|^p e^{-Ms}dxds \\
\end{aligned}
\end{equation}
for all $t>0$ almost surely. Notice that \eqref{boundedness_of_deterministic_coefficients_space-time_white_noise} and \eqref{ellipticity_of_leading_coefficients_space-time_white_noise} imply
\begin{equation} \label{approximation_of_leading_coeffi_1_STWN_1}
\begin{aligned}
& \left| (aw_{m})_x^{(\ep)}(s,x) - a(s,x)w_{mx}^{(\ep)}(s,x) \right| \\
&\quad = \ep^{-1}\left|\int_{\bR}  ( a(s,x-\ep y) - a(s,x) )w_m(s,x-\ep y)\zeta'(y)  dy \right|\\
&\quad \leq N(K)\int_{\bR} |w_m(s,x-\ep y)||y\zeta'(y)|dy
\end{aligned}
\end{equation}
and
\begin{equation} \label{approximation_of_leading_coeffi_2_STWN_1}
\begin{aligned}
&-\int_\bR a(s,x)\left|w_m^{(\ep)}(s,x)\right|^{p-2} \left|w_{mx}^{(\ep)}(s,x)\right|^2 dx \\
&\quad \leq -K^{-1} \int_\bR \left|w_m^{(\ep)}(s,x)\right|^{p-2} \left|w_{mx}^{(\ep)}(s,x)\right|^2  dx.
\end{aligned}
\end{equation}
Thus, by \eqref{approximation_of_leading_coeffi_1_STWN_1}, \eqref{approximation_of_leading_coeffi_2_STWN_1}, and Young's inequality,
\begin{equation*}
\begin{aligned}
&-\int_\bR \left|w_m^{(\ep)}(s,x)\right|^{p-2}  w_{mx}^{(\ep)}(s,x) \left(aw_{m}\right)_x^{(\ep)}(s,x) dx \\
& \quad = -\int_\bR \left|w_m^{(\ep)}(s,x)\right|^{p-2}  w_{mx}^{(\ep)}(s,x) \left[ \left(aw_{m}\right)_x^{(\ep)}(s,x) - a(s,x)w_{mx}^{(\ep)}(s,x) \right] dx \\
&\quad\quad\quad- \int_\bR a(s,x) \left|w_m^{(\ep)}(s,x)\right|^{p-2}   \left| w_{mx}^{(\ep)}(s,x) \right|^2 dx \\
\end{aligned}
\end{equation*}
\begin{equation} \label{bound_of_leading_term_STWN_1}
\begin{aligned}
&\quad \leq N(K)\int_\bR \left|w_m^{(\ep)}(s,x)\right|^{p-2}  \left| w_{mx}^{(\ep)}(s,x) \right| \int_{\bR} |w_m(s,x-\ep y)||y\zeta'(y)|dy dx \\
&\quad\quad\quad- K^{-1}\int_\bR \left|w_m^{(\ep)}(s,x)\right|^{p-2}  \left| w_{mx}^{(\ep)}(s,x) \right|^2 dx\\
&\quad \leq N\int_\bR \left|w_m^{(\ep)}(s,x)\right|^{p-2}  \left( \int_{\bR} |w_m(s,x-\ep y)||y\zeta'(y)|dy\right)^2 dx \\
&\quad\quad\quad- \frac{1}{2}K^{-1}\int_\bR \left|w_m^{(\ep)}(s,x)\right|^{p-2}  \left| w_{mx}^{(\ep)}(s,x) \right|^2 dx.
\end{aligned}
\end{equation}
Besides, we have
\begin{equation*}
\begin{aligned}
\left| \left(\left(2a_x - b\right)w_m\right)^{(\ep)}(s,x) \right| &= \left| \int_{\bR}\left(2a_y(s,y) - b(s,y)\right)w_m(s,y)\zeta_\ep(x-y)dy \right| \\
&\leq K\int_{\bR}|w_m(s,y)|\zeta_\ep(x-y)dy \\
&= K(|w_m(s,\cdot)|)^{(\ep)}(x)
\end{aligned}
\end{equation*}
and $\left| ((a_{xx} - b_x + c)w_m)^{(\ep)}(s,x) \right| \leq K(|w_m(s,\cdot)|)^{(\ep)}(x)$. Thus, by Young's inequality, we have
\begin{equation} \label{bound_of_lower_order_term_coefficient_STWN_1}
\begin{aligned}
&- p(p-1)\int_0^{\tau\wedge t} \int_\bR \left|w_m^{(\ep)}(s,x)\right|^{p-2}w_{mx}^{(\ep)}(s,x)  \left( (2a_x - b)w_m \right)^{(\ep)}(s,x)  e^{-Ms} dxds \\
&\quad + p\int_0^{\tau\wedge t} \int_\bR \left|w_m^{(\ep)}(s,x)\right|^{p-1} \left((a_{xx} - b_x + c) w_m \right)^{(\ep)}(s,x)  e^{-Ms} dxds \\
&\quad\quad\leq  \frac{1}{8}K^{-1}p(p-1)\int_0^{\tau\wedge t} \int_\bR \left|w_m^{(\ep)}(s,x)\right|^{p-2}\left|w_{mx}^{(\ep)}(s,x)\right|^2  e^{-Ms} dxds \\
&\quad\quad\quad + N\int_0^{\tau\wedge t} \int_\bR \left|w_m^{(\ep)}(s,x)\right|^{p-2}\left| \left( |w_m(s,\cdot)| \right)^{(\ep)}(x) \right|^2  e^{-Ms} dxds\\
&\quad\quad\quad + N\int_0^{\tau\wedge t} \int_\bR \left|w_m^{(\ep)}(s,x)\right|^{p-1} \left( |w_m(s,\cdot)| \right)^{(\ep)}(x)   e^{-Ms} dxds.
\end{aligned}
\end{equation}
Furthermore, since $p-1+\lambda\in2\bN$, we have 
\begin{equation*}
\begin{aligned}
\int_\bR \left| w_m^{(\ep)}(s,x) \right|^{p-1+\lambda}w_{mx}^{(\ep)}(s,x) dx &= \int_\bR \left( w_m^{(\ep)}(s,x) \right)^{p-1+\lambda}w_{mx}^{(\ep)}(s,x) dx \\
&= \frac{1}{p+\lambda}\int_\bR \left(\left( w_m^{(\ep)}(s,x) \right)^{p+\lambda}\right)_x dx \\
&= 0
\end{aligned}
\end{equation*}
for $s\leq \tau$. Thus, Young's inequality yields that
\begin{equation} \label{bound_of_nonlienar_term_STWN_1}
\begin{aligned}
&\int_\bR \left| w_m^{(\ep)}(s,x) \right|^{p-2}w_{mx}^{(\ep)}(s,x)\left( (w_m(s,\cdot) + v(s,\cdot))^{1+\lambda} \right)^{(\ep)}(x) dx \\
& = -\int_\bR \left| w_m^{(\ep)}(s,x) \right|^{p-2}w_{mx}^{(\ep)}(s,x) \Phi_\ep(w_m,v,s,x)dx \\
& \leq \frac{1}{4K}\int_\bR \left| w_m^{(\ep)}(s,x) \right|^{p-2}  \left| w_{mx}^{(\ep)}(s,x) \right|^2  dx + N\int_\bR \left| w_m^{(\ep)}(s,x) \right|^{p-2}   \Phi_\ep^2(w_m,v,s,x)dx,
\end{aligned}
\end{equation}
where $\Phi_\ep(w_m,v,s,x) = \left( (w_m(s,\cdot) + v(s,\cdot))^{1+\lambda}  \right)^{(\ep)}(x) - \left| w_m^{(\ep)}(s,x) \right|^{1+\lambda}$.
Therefore, applying \eqref{bound_of_leading_term_STWN_1}, \eqref{bound_of_lower_order_term_coefficient_STWN_1}, and \eqref{bound_of_nonlienar_term_STWN_1} to \eqref{applying_chain_rule_noise_cancelling}, we have
\begin{equation*}
\begin{aligned}
&  e^{-M\tau\wedge t}\int_{\bR}\left|w_m^{(\ep)}(\tau\wedge t,x)\right|^p dx \\
&\quad\leq N\int_0^{\tau\wedge t}\int_\bR \left|w_m^{(\ep)}(s,x)\right|^{p-2}  \left( \int_{\bR} |w_m(s,x-\ep y)||y\zeta'(y)|dy\right)^2 dx ds \\
&\quad\quad + N\int_0^{\tau\wedge t} \int_\bR \left|w_m^{(\ep)}(s,x)\right|^{p-2}  \left| \left( |w_m(s,\cdot)| \right)^{(\ep)}(x) \right|^2  e^{-Ms} dxds \\
&\quad\quad + N\int_0^{\tau\wedge t} \int_\bR \left|w_m^{(\ep)}(s,x)\right|^{p-1} \left(|w_m(s,\cdot)|\right)^{(\ep)}(x)  e^{-Ms} dxds \\
&\quad\quad +  N\int_0^{\tau\wedge t}\int_\bR \left| w_m^{(\ep)}(s,x) \right|^{p-2}   \Phi^2_\ep(w_m,v,s,x) e^{-Ms} dxds \\
&\quad\quad - M\int_0^{\tau\wedge t} \int_\bR \left|w_m^{(\ep)}(s,x)\right|^p e^{-Ms} dxds.
\end{aligned}
\end{equation*}
Thus, by letting $\ep\downarrow0$, we have
\begin{equation*}
\begin{aligned}
& e^{-M\tau\wedge t}\int_{\bR}\left|w_m(\tau\wedge t,x)\right|^p dx \\
&\leq  (N-M)\int_0^{\tau\wedge t} \int_\bR \left|w_m(s,x)\right|^{p} e^{-Ms} dxds \\
&+  N\int_0^{\tau\wedge t}\int_\bR \left| w_m(s,x) \right|^{p-2}   \left( (w_m(s,x) + v(s,x))^{1+\lambda} - (w_m(s,x))^{1+\lambda} \right)^2 e^{-Ms} dxds  \\
\end{aligned}
\end{equation*}
Choose $M = 2N$. Since $|a^{1+\lambda} - b^{1+\lambda}| \leq N(\lambda)|a-b|(a^{\lambda}+b^{\lambda})$ for $\lambda,a,b\in[0,\infty)$,  Young's inequality implies that
\begin{equation*}
\begin{aligned}
& e^{-M\tau\wedge t}\int_{\bR}\left|w_m(\tau\wedge t,x)\right|^p dx \\
&\leq N\int_0^{\tau\wedge t}\int_\bR \left| w_m(s,x) \right|^{p-2}   \left| (w_m(s,x) + v(s,x))^{1+\lambda} - (w_m(s,x))^{1+\lambda} \right|^2 e^{-Ms} dxds \\
&\leq N\int_0^{\tau\wedge t}\int_\bR \left| w_m(s,x) \right|^{p-2}    (2|w_m(s,x)|^\lambda + |v(s,x)|^\lambda) |v(s,x)| e^{-Ms} dxds \\
&\leq  N\int_0^{\tau\wedge t}  \int_\bR | w_m(s,x) |^{p} e^{-M\tau \wedge s} dxds \\
&\quad+ N\int_0^{\tau}\int_\bR ( | v(s,x) |^{\frac{p}{2-\lambda}} + | v(s,x) |^{\frac{p(1+\lambda)}{2}}) e^{-Ms} dxds \\
&\leq N\int_0^{t}\int_\bR | w_m(\tau\wedge s,x) |^{p}e^{-M \tau\wedge s}dx  ds \\
&\quad + N\left(\sup_{t\leq\tau,x\in\bR}|v(t,x)|^{\frac{p-2+\lambda}{2-\lambda}} + \sup_{t\leq\tau,x\in\bR}|v(t,x)|^{\frac{p(1+\lambda)-2}{2}}\right)\int_0^{\tau}\int_\bR | v(s,x) |  e^{-Ms} dxds
\end{aligned}
\end{equation*}
for all $t>0$ almost surely. 
Recall that $w_m := u_m - v\in\cH_p^{1/2-\kappa}(\tau_m^m)$. By Theorem \ref{embedding} \eqref{large-p-embedding}, we have $w_m\in C([0,\tau_m^m];L_p)$ (a.s.).
For almost sure $\omega$, by Gr\"onwall's inequality, we have
\eqref{Lq_bound_2_estimate}. The lemma is proved.

\qed
\end{proof}

By combining Lemmas \ref{noise_cancelling_lemma} and \ref{Lq_bound_2}, we show that solution candidate $u$ is non-explosive.

\begin{lemma} \label{Non_explosion_large_lambda}
All the conditions of Theorem \ref{theorem_burgers_eq_large_lambda_STWN} hold. Let $u$ be the function introduced in Remark \ref{def_u}. Then, for any $T<\infty$, we have
\begin{equation} 
\label{stopping_time_blow_up_STWN_1}
\lim_{R\to\infty} P\left( \left\{ \omega\in\Omega:\sup_{t\leq T,x\in\bR} |u(t,x)| > R \right\} \right) = 0.
\end{equation}
\end{lemma}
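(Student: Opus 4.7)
The strategy is to use the decomposition $u_m=v+w_m$ on $[0,\tau_m^m]$, where $v$ is the noise-dominant part from Lemma \ref{noise_cancelling_lemma} and $w_m:=u_m-v$. On this interval the cutoff is inert ($h_m(u_m)=1$ since $|u_m|\le m$) and $u=u_m$ forces the driving martingale terms to cancel, so $w_m$ solves the linear parabolic equation with deterministic-looking forcing
\begin{equation*}
dw_m=\bigl(aw_{mxx}+bw_{mx}+cw_m+\bar b(u_m^{1+\lambda})_x\bigr)\,dt,\qquad w_m(0,\cdot)=0,
\end{equation*}
on $[0,\tau_m^m]$. I would first reduce the statement to two probabilities. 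By the very definition of $\tau_R^R$, on $\{\tau_R^R=T\}$ one has $\sup_{t\le T,x}|u_R|\le R$ and hence $\sup_{t\le T,x}|u|\le R$; thus $\{\sup|u|>R\}\subset\{\tau_R^R<T\}$. Continuity of $u_R$ forces $\sup_{t\le\tau_R^R,x}|u_R|=R$ on this event, so the triangle inequality with $u_R=v+w_R$ yields
\begin{equation*}
P\bigl(\sup_{t\le T,x}|u|>R\bigr)\le P\bigl(\sup_{t\le T,x}|v|\ge R/2\bigr)+P\bigl(\sup_{t\le\tau_R^R,x}|w_R|\ge R/2\bigr);
\end{equation*}
the first summand vanishes in the limit $R\to\infty$ via Chebyshev and \eqref{noise_cancelling_estimate}.

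To handle the $w_R$-term I introduce the localizing stopping time
\begin{equation*}
\sigma_L:=T\wedge\inf\Bigl\{t\le T:\sup_{s\le t,x}|v(s,x)|+\sup_{s\le t}\|v(s,\cdot)\|_{L_1}>L\Bigr\},
\end{equation*}
which increases to $T$ a.s.\ as $L\to\infty$ by \eqref{noise_cancelling_estimate}--\eqref{noise_cancelling_estimate_2}. On $[0,\tau_m^m\wedge\sigma_L]$, Lemma \ref{Lq_bound_2} together with the interpolation $\|v\|_{L_p}\le\|v\|_{L_\infty}^{(p-1)/p}\|v\|_{L_1}^{1/p}$ delivers a pathwise bound $\sup_t\|u_m(t,\cdot)\|_{L_p}\le C_0(L)$ that is uniform in $m$. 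The analytic heart of the argument is the Sobolev embedding $L_{p/(1+\lambda)}(\bR)\hookrightarrow H_p^{-1/2-\kappa}(\bR)$, which holds since $\lambda/p<1/2+\kappa$ is guaranteed by $\lambda\le 1$ and $p>6/(1-2\kappa)$; it yields $\|(u_m^{1+\lambda})_x\|_{H_p^{-3/2-\kappa}}\le N\|u_m\|_{L_p}^{1+\lambda}$. Feeding this into the linear $\cH_p^{1/2-\kappa}$-estimate of Theorem \ref{theorem_nonlinear_case} applied to the $w_m$-equation produces the crucial $m$-uniform bound
\begin{equation*}
\|w_m\|_{\cH_p^{1/2-\kappa}(\tau_m^m\wedge\sigma_L)}^p\le N\,\bE\!\int_0^{\tau_m^m\wedge\sigma_L}\!\|u_m(t,\cdot)\|_{L_p}^{p(1+\lambda)}\,dt\le N(L,T,p,\lambda,\kappa,K).
\end{equation*}

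Applying Corollary \ref{embedding_corollary} upgrades this to $\bE\sup_{t\le\tau_m^m\wedge\sigma_L,\,x}|w_m|^p\le N(L,T,p,\lambda,\kappa,K)$, uniformly in $m$. Specializing $m=R$ and splitting $\{\sup_{[0,\tau_R^R],x}|w_R|\ge R/2\}\subset\{\sigma_L<T\}\cup\{\sup_{[0,\tau_R^R\wedge\sigma_L],x}|w_R|\ge R/2\}$, Chebyshev on the second event gives $P(\sup_{[0,\tau_R^R],x}|w_R|\ge R/2)\le P(\sigma_L<T)+N(L)R^{-p}$. Assembling everything yields $\limsup_{R\to\infty}P(\sup|u|>R)\le P(\sigma_L<T)$ for every $L$, and sending $L\to\infty$ closes the proof. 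The central obstacle is exactly the step of converting the pathwise $L_p$-bound on $w_m$ supplied by Lemma \ref{Lq_bound_2} into a pointwise sup bound; this is what compels us to re-enter the linear stochastic $L_p$-theory for $w_m$ and to invoke the Sobolev embedding $L_{p/(1+\lambda)}\hookrightarrow H_p^{-1/2-\kappa}$, thereby activating the hypothesis $p>6/(1-2\kappa)$.
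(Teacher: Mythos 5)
Your proof is correct and follows essentially the same route as the paper's: localize on the noise part $v$ being bounded in $L_\infty$ and $L_1$ (the paper uses two stopping times $\tau^1(S),\tau^2(S)$ where you use the single $\sigma_L$), invoke Lemma \ref{Lq_bound_2} for the pathwise $L_p$ bound of $u_m$, convert it into a sup bound through Theorem \ref{theorem_nonlinear_case} and Corollary \ref{embedding_corollary}, and finish with Chebyshev in $R$ and then in the localization parameter. The only cosmetic differences are that you apply the linear estimate to $w_m=u_m-v$ (so the martingale term drops out and $\sup|v|$ is added back at the end) whereas the paper applies it to $u_m$ itself and bounds $\sigma(u_m)\eta$ by the same pathwise $L_p$ bound, and that your embedding $L_{p/(1+\lambda)}\hookrightarrow H_p^{-1/2-\kappa}$ plays the role of the paper's H\"older estimate on the kernel $R_{1/2+\kappa}$.
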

\begin{proof}
Let $T<\infty$. Suppose $v$ is the solution to \eqref{noise_cancelling_eq}  introduced in Lemma \ref{noise_cancelling_lemma}.
For $m,S>0$, define
\begin{equation*}
\begin{aligned}
\tau^1(S) &:=\inf\Bigg\{ t\geq 0:\| v(t,\cdot) \|_{L_{1}}\geq S \Bigg\}, \quad \tau^2(S) :=\inf\Bigg\{ t\geq 0:\sup_{x\in\bR}|v(t,x)|\geq S \Bigg\},
\end{aligned}
\end{equation*}
and
\begin{equation*}
\tau := \tau(m,S,T) := \tau_{m}^m\wedge\tau^1(S) \wedge \tau^2(S)\wedge T,
\end{equation*}
where $\tau_m^m$ is defined as in \eqref{stopping_time_taumm}.
By Lemma \ref{noise_cancelling_lemma}, $\tau^1(S)$ and $\tau^2(S)$ are well-defined stopping times and thus $\tau$ is a stopping time.  Then, by Theorem \ref{theorem_nonlinear_case}, H\"older's inequality, and Minkowski's inequality, we have
\begin{equation}
\label{main_computation_Non_explosion_small_lambda}
\begin{aligned}
&\| u_m \|^p_{\cH_{p}^{1/2-\kappa}(\tau )} - N\| u_0 \|^p_{U_p^{1/2-\kappa}}\\
&\leq N\left\| \bar{b}\left(u_{m+}^{1+\lambda} h_m(u_m)\right)_{x} \right\|_{\bH_p^{-3/2-\kappa}(\tau)} + N\| \sigma(u_m)\eta \|_{\bH_p^{-1/2-\kappa}(\tau,\ell_2)} \\
&\leq N\left\|  u_{m+}^{1+\lambda} h_m(u_m) \right\|_{\bH_p^{-1/2-\kappa}(\tau)} + N\| \sigma(u_m)\eta \|_{\bH_p^{-1/2-\kappa}(\tau,\ell_2)} \\
&\leq N\bE\int_0^{\tau }\int_{\bR} \left| \int_{\bR} R_{1/2+\kappa}(y)|u_m(s,x-y)|^{1+\lambda} dy \right|^p dxds\\
&\quad\quad + N\bE\int_0^{\tau }\int_{\bR}\left| \int_{\bR} \left| R_{1/2+\kappa}(y) \right|^2|u_m(s,x-y)|^{2} dy \right|^{p/2} dxds \\
&\leq N\bE\int_0^{\tau }  \int_{\bR} \left| \int_{\bR} \left| R_{1/2+\kappa}(y) \right|^{\frac{p}{p-\lambda}}|u_m(s,x-y)|^{\frac{p}{p-\lambda}} dy \right|^{p-\lambda} dx \left(\int_{\bR} |u_m(s,x)|^{p}  dx\right)^\lambda ds \\
&\quad\quad + N\bE\int_0^{\tau }\left( \int_{\bR} \left|R_{1/2+\kappa}(x)\right|^2 dx\right)^{p/2}  \int_{\bR} |u_m(s,x)|^{p}  dx  ds \\
&\leq N\bE\int_0^{\tau } \left( \int_{\bR} \left| R_{1/2+\kappa}(y) \right|^{\frac{p}{p-\lambda}} dy \right)^{p-\lambda} \left(\int_{\bR} |u_m(s,x)|^{p}  dx\right)^{1+\lambda}ds \\
&\quad\quad + N\bE\int_0^{\tau }\left( \int_{\bR} \left|R_{1/2+\kappa}(x)\right|^2 dx\right)^{p/2}  \int_{\bR} |u_m(s,x)|^{p}  dx  ds \\
\end{aligned}
\end{equation}
where $N = N(p,\kappa,K,T)$. Since $\frac{p}{p-\lambda}\left(\frac{1}{2}-\kappa\right)<1$ and $2\left(\frac{1}{2}-\kappa\right)<1$, by Remark \ref{Kernel}, we have
\begin{equation}
\label{main_computation_Non_explosion_small_lambda_2}
\left( \int_{\bR} \left| R_{1/2+\kappa}(y) \right|^{\frac{p}{p-\lambda}} dy \right)^{p-\lambda} + \left(\int_{\bR} \left|R_{1/2+\kappa}(x)\right|^2 dx\right)^{p/2} < \infty.
\end{equation}
Besides, since $s\leq \tau$ and $\tau \leq \tau_m^m$, Lemma \ref{Lq_bound_2} yields
\begin{equation}
\label{main_computation_Non_explosion_small_lambda_3}
\begin{aligned}
\int_\bR |u_m(s,x)|^p dx 
&\leq N(p)\left(\int_\bR |u_m(s,x) - v(s,x)|^p dx + \int_\bR |v(s,x)|^p dx\right) \\
&\leq N(p,K,T)\Theta(v)\sup_{s\leq \tau} \| v(s,\cdot) \|_{L_1}  \\
&\leq N(S,p,\lambda,K,T),
\end{aligned}
\end{equation}
for all $s\leq \tau$ almost surely, where
$$\Theta(v) = \sup_{s\leq \tau,x\in\bR}|v(s,x)|^{\frac{p+\lambda-2}{2-\lambda}} + \sup_{s\leq \tau,x\in\bR}|v(s,x)|^{\frac{p(1+\lambda)-2}{2}} + \sup_{s\leq\tau,x\in\bR}|v(s,x)|^{p-1}.
$$
Therefore, by \eqref{main_computation_Non_explosion_small_lambda}, \eqref{main_computation_Non_explosion_small_lambda_2}, and \eqref{main_computation_Non_explosion_small_lambda_3}, we have
\begin{equation}
\label{bound_of_local_sol}
\| u_m \|^p_{\cH_{p}^{1/2-\kappa}(\tau)} \leq N(p,\kappa,K,T)\| u_0 \|^p_{U_p^{1/2-\kappa}} + N(S,p,\kappa,\lambda,K,T).
\end{equation}
It should be remarked that $N$ is independent of $m$. Therefore, by Corollary \ref{embedding_corollary}, $\bE\sup_{t\leq \tau,x}|u_m(t,x)|^p$ is bounded by the right hand side of \eqref{bound_of_local_sol}. Thus, for any $R>0$, by Chebyshev's inequality and \eqref{bound_of_local_sol}, we have
\begin{equation}
\label{chebyshev's_ineq_1}
\begin{aligned}
P\left( \sup_{t\leq \tau,x\in\bR}|u(t,x)| > R \right) &\leq \frac{1}{R^p}\bE\sup_{t\leq \tau,x\in\bR}|u(t,x)|^p \\
&= \frac{1}{R^p}\bE\sup_{t\leq \tau,x\in\bR}|u_m(t,x)|^p \\
&\leq \frac{1}{R^p}\| u_m \|_{\cH_p^{1/2-\kappa}(\tau)}^p \\
&\leq \frac{1}{R^p}N(u_0,S,p,\kappa,\lambda,K,T).
\end{aligned}
\end{equation}
On the other hand, by Chebyshev's inequality and Lemma \ref{noise_cancelling_lemma}, we have
\begin{equation}
\label{chebyshev's_ineq_2}
\begin{aligned}
&P\left(\tau^1(S)<T\right) + P\left(\tau^2(S)<T\right)\\
&\quad\leq P\left( \sup_{t\leq T}\| v(t,\cdot)\|_{L_{1}} > S \right) + P\left( \sup_{t\leq T,x\in\bR} |v(t,x)| > S \right) \\
&\quad\leq \frac{1}{\sqrt{S}}\bE\sup_{t\leq T}\| v \|_{L_{1}}^{1/2} + \frac{1}{S^p}\bE\sup_{t\leq T,x\in\bR} |v(t,x)|^p \\
&\quad\leq \frac{1}{\sqrt S}N(u_0,p,\kappa,K,T).
\end{aligned}
\end{equation}
Thus, Chebyshev's inequality, Fatou's lemma, \eqref{chebyshev's_ineq_1}, and \eqref{chebyshev's_ineq_2} yield
\begin{equation*}
\begin{aligned}
P\left(\sup_{t\leq T,x\in\bR}|u(t,x)| > R\right) 
= P\left(\sup_{t\leq  \tau_\infty\wedge T,x\in\bR}|u(t,x)| > R\right)
\end{aligned}
\end{equation*}
\begin{equation*}
\begin{aligned}
&\leq P\left(\sup_{t\leq \tau_\infty\wedge \tau^1(S)\wedge \tau^2(S_2)\wedge T,x\in\bR}|u(t,x)| > R\right) + P\left(\tau^1(S)<T\right) + P\left(\tau^2(S)<T\right)\\
&\leq \liminf_{m\to\infty}P\left(\sup_{t\leq \tau(m,S,T),x\in\bR}|u(t,x)| > R\right) + P\left(\tau^1(S)<T\right) + P\left(\tau^2(S)<T\right)  \\
&\leq \frac{N_1}{R^p} + \frac{N_2}{\sqrt S},
\end{aligned}
\end{equation*}
where $N_1 = N_1(u_0,S,p,\kappa,\lambda,K,T)$ and $N_2 = N_2(u_0,p,\kappa,K,T)$. Notice that $N_1$ is independent of  $R$, and $N_2$ is independent of $R$ and $S$. By letting $R\to\infty$, and $S\to\infty$ in order, we get \eqref{stopping_time_blow_up_STWN_1}. The lemma is proved.

\qed
\end{proof}

\noindent\textbf{Proof of Theorem \ref{theorem_burgers_eq_large_lambda_STWN}} 
{\it Step 1. (Uniqueness). }
Suppose $u,\bar u\in \cH_{p,loc}^{1/2-\kappa}$ are nonnegative solutions of equation \eqref{burger's_eq_space_time_white_noise}. By Definition \ref{definition_of_sol_space} \eqref{def_of_local_sol_space}, there are bounded stopping times $\tau_n$ $(n = 1,2,\cdots)$ 
such that
\begin{equation*}
\tau_n\uparrow\infty\quad\mbox{and}\quad u, \bar u \in \cH_{p}^{1/2-\kappa}(\tau_n).
\end{equation*}
Fix $n\in\bN$. Since $p > \frac{6}{1-2\kappa}$, by Corollary \ref{embedding_corollary}, we have $u,\bar{u} \in C([0,\tau_n];C(\bR))$ (a.s.) and
\begin{equation}
\label{embedding in the proof of uniqueness_infinite_noise_1}
\bE\sup_{t\leq\tau_n}\sup_{x\in\bR}|u(t,x)|^p + \bE\sup_{t\leq\tau_n}\sup_{x\in\bR}|\bar u(t,x)|^p < \infty.
\end{equation}
For $m \in \bN$, define
\begin{equation*}
\begin{gathered}
\tau_{m,n}^1:=\inf\left\{t\geq0:\sup_{x\in\bR}|u(t,x)|> m\right\}\wedge\tau_n, \\
\tau_{m,n}^2:=\inf\left\{t\geq0:\sup_{x\in\bR}|\bar u(t,x)|> m\right\}\wedge\tau_n,
\end{gathered}
\end{equation*}
and 
\begin{equation} 
\label{stopping_time_cutting}
\tau_{m,n}:=\tau_{m,n}^1\wedge\tau_{m,n}^2.
\end{equation}
Due to \eqref{embedding in the proof of uniqueness_infinite_noise_1}, $\tau_{m,n}^1$ and $\tau_{m,n}^2$ are well-defined stopping times, and thus $\tau_{m,n}$ is a stopping time.
Observe that $u,\bar u\in\cH_p^{1/2-\kappa}(\tau_{m,n})$ and  $\tau_{m,n}\uparrow \tau_n$ as $m\to\infty$ almost surely. Fix $m\in\bN$. Notice that $u,\bar u\in\cH_p^{1/2-\kappa}(\tau_{m,n})$ are solutions to equation
\begin{equation*}
dv = \left(  av_{xx} + b v_{x} + cv + \bar{b} \left( v_+^{1+\lambda}h_m(v) \right)_{x}  \right)\,dt + \sigma^k(v) \eta_k dw^k_t, \quad 0<t\leq\tau_{m,n}
\end{equation*}
with the initial data $v(0,\cdot)=u_0$.
By the uniqueness result in Lemma \ref{cut_off_lemma_large_lambda_STWN}, we conclude that $u=\bar u$ in $\cH_p^{1/2-\kappa}(\tau_{m,n})$ for each $m\in\bN$. The monotone convergence theorem yields $u=\bar u$ in $\cH_p^{1/2-\kappa}(\tau_n)$ and this implies $u = \bar u$ in $\cH_{p,loc}^{1/2-\kappa}$.
\\

{\it Step 2. (Existence).}
For $m\in\bN$, define a stopping time $\tau_m^m$ and $u$ as in Remark \ref{def_u}. Let $T<\infty$. Observe that
\begin{equation*}
\left\{\omega\in\Omega : \tau_m^m < T \right\} \subset \left\{\omega\in\Omega : \sup_{t \leq \tau_m^m,x\in\bR}|u(t,x)| \leq \sup_{t \leq T,x\in\bR}|u(t,x)| \right\}.
\end{equation*}
Since $\sup_{t \leq \tau_m^m,x\in\bR}|u(t,x)| = \sup_{t \leq \tau_m^m,x\in\bR}|u_m(t,x)| = m$ (a.s.), by Lemma \ref{Non_explosion_large_lambda}, 
\begin{equation*}
\begin{aligned}
\limsup_{m\to\infty}  P\left( \tau_m^m < T \right) 
&\leq \limsup_{m\to\infty} P\left(\sup_{t\leq T,x\in\bR}|u(t,x)|\geq m\right)\to 0.
\end{aligned}
\end{equation*}
Since $T<\infty$ is arbitrary, this implies $\tau_m^m\to \infty$ in probability. Since $\tau_m^m$ is increasing in $m$,  we conclude that $\tau_m^m \uparrow \infty$ (a.s.). 

Lastly, Set $\tau_m:=\tau_m^m \wedge m$. Recall that 
\begin{equation*}
u(t,x):=u_m(t,x)\quad \text{for}\quad t\in[0,\tau_m^m].
\end{equation*}
Observe that $|u_m(t)|\leq m$ for $t\in[0,\tau_m^m]\cap[0,\infty)$ and thus $u_m$ satisfies \eqref{burger's_eq_space_time_white_noise} for all $t\in[0,\tau_m^m]\cap[0,\infty)$ (a.s.).
Since $u=u_m$ for $t\in[0,\tau_m^m]\cap[0,\infty)$ and $u_m\in \cH_p^{1/2-\kappa}(T)$ for any $T<\infty$, it follows that $u\in\cH_p^{1/2-\kappa}(\tau_m)$ and $u$ satisfies \eqref{burger's_eq_space_time_white_noise} for all $t \leq \tau_m$ (a.s.). Since $\tau_m\uparrow \infty$ (a.s.) as $m\to\infty$, we have $u\in\cH_{p,loc}^{1/2-\kappa}$.

{\it Step 3. (H\"older regularity). } Let $T<\infty$. Since $u\in \cH_p^{1/2-\kappa}(T)$, by employing Corollary \ref{embedding_corollary}, we have \eqref{holder_regularity_large_lambda_STWN}. The theorem is proved.

\qed

\noindent\textbf{Proof of Theorem \ref{uniqueness_in_p_1}}
The proof of Theorem \ref{uniqueness_in_p_1} is motivated by \cite[Corollarly 5.11]{kry1999analytic}. Let $q>p$. By Theorem \ref{theorem_burgers_eq_large_lambda_STWN}, there exists a unique solution $\bar{u}\in \cH_{q,loc}^{1/2-\kappa}$ satisfying equation \eqref{burger's_eq_space_time_white_noise}. By Definition \ref{definition_of_sol_space} \eqref{def_of_local_sol_space}, there exists $\tau_n$ such that $\tau_n\to\infty$ (a.s.) as $n\to\infty$, $u\in\cH_p^{1/2-\kappa}(\tau_n)$ and $\bar{u} \in\cH_q^{1/2-\kappa}(\tau_n)$. Fix $n\in\bN$. Since $\frac{6}{1-2\kappa} < p < q$, we can define $\tau_{m,n}$ $(m\in\bN)$ as in \eqref{stopping_time_cutting}. For any $p_0 > p$, we have
\begin{equation*}
u\in \bL_{p_0}(\tau_{m,n}) 
\end{equation*}
since
\begin{equation*}
\bE\int_0^{\tau_{m,n}} \int_{\bR} |u(t,x)|^{p_0} dxdt \leq m^{p_0-p}\bE\int_0^{\tau_{m,n}} \int_{\bR} |u(t,x)|^{p} dxdt < \infty.
\end{equation*} 
Observe that $\bar{b} \left( u^{1+\lambda} \right)_{x} \in \bH_q^{-1}(\tau_{m,n})$. Indeed, by \eqref{boundedness_of_deterministic_coefficients_space-time_white_noise}, Lemma \ref{prop_of_bessel_space} \eqref{bounded_operator}, \eqref{pointwise_multiplier}, we have
\begin{equation*}
\begin{aligned}
&\bE \int_0^{\tau_{m,n}} \left\| \bar{b}(s)  \left( (u(s,\cdot))^{1+\lambda} \right)_{x} \right\|_{H_q^{-1}}^q ds  \leq N \bE \int_0^{\tau_{m,n}} \int_{\bR}|u(s,x)|^{q(1+\lambda)} dxds < \infty.
\end{aligned}
\end{equation*}
Also, by Lemma \ref{prop_of_bessel_space} \eqref{pointwise_multiplier} again, we have
\begin{equation*}
\begin{aligned}
&au_{xx} \in \bH_{q}^{-2}(\tau_{m,n}), \quad b u_{x} \in \bH_{q}^{-1}(\tau_{m,n}), \quad\text{and}\quad c u \in \bL_{q}(\tau_{m,n}).
\end{aligned}
\end{equation*}
Therefore, since $\bL_{q}(\tau_{m,n})\subset \bH_{q}^{-1}(\tau_{m,n}) \subset \bH_{q}^{-2}(\tau_{m,n})$, Lemma \ref{prop_of_bessel_space} \eqref{norm_bounded} yields
\begin{equation} \label{deterministic_part_of_u_infinite_noise_1}
au_{xx} + b u_{x} + c u + b\left(u^{1+\lambda} \right)_{x} \in \bH_{q}^{-2}(\tau_{m,n}).
\end{equation}
By \eqref{boundedness_of_stochastic_coefficients_large_lambda_STWN},
\begin{equation} \label{proof_of_claim_stochastic_part}
\begin{aligned}
\|\sigma(u)\|^q_{\bL_q(\tau_{m,n},\ell_2)} &= \bE\int_0^{\tau_{m,n}} \int_{\bR} \left(\sum_{k} \left| \sigma^k(s,x,u(s,x)) \right|^2\right)^{q/2} dxds \\
&\leq N\left\| u \right\|_{\bL_q(\tau_{m,n})}^q \\
&< \infty.
\end{aligned}
\end{equation}
Thus, we have
\begin{equation} \label{stochastic_part_of_u_infinite_noise_1}
\sigma( u ) \in \bL_q(\tau_{m,n},\ell_2) \subset \bH_{q}^{-1}(\tau_{m,n},\ell_2).
\end{equation}
Due to \eqref{deterministic_part_of_u_infinite_noise_1} and \eqref{stochastic_part_of_u_infinite_noise_1}, the right hand side of equation \eqref{burger's_eq_space_time_white_noise} is $H_{p}^{-2}$-valed continuous function on $[0,\tau_{m,n}]$. Thus, $u$ is in $\cL_q(\tau_{m,n})$ and $u$ satisfies \eqref{burger's_eq_space_time_white_noise} for all $t\leq \tau_{m,n}$ almost surely with $u(0,\cdot) = u_0(\cdot)$. On the other hand, since $\bar{b} \left( u^{1+\lambda} \right)_{x} \in \bH_q^{-3/2-\kappa}(\tau_{m,n})$ and $\sigma( u ) \in \bH^{-1/2-\kappa}_q(\tau_{m,n},\ell_2) $, Theorem \ref{theorem_nonlinear_case} implies that there exists $v\in \cH_q^{1/2-\kappa}(\tau_{m,n})$ satisfying
\begin{equation}
\label{equation_in_proof_of_consistency_infinite_noise_1}
dv = \left(av_{xx} + b v_{x} + cv +  \bar{b}\left( u^{1+\lambda}  \right)_{x}  \right) dt + \sigma^k(u)\eta_k dw_t^k, \quad 0<t\leq\tau_{m,n} 
\end{equation}
with the initial data $v(0,\cdot)=u_0.$
In \eqref{equation_in_proof_of_consistency_infinite_noise_1}, $\bar{b}\left( u^{1+\lambda}  \right)_{x}$ and $\sigma^k(u)$ are used instead of $\bar{b}\left( v^{1+\lambda}  \right)_{x}$ and $\sigma^k(v)$. Since $u\in\cL_q(\tau_{m,n})$ satisfies equation \eqref{equation_in_proof_of_consistency_infinite_noise_1}, $w := u-v\in \cL_q(\tau_{m,n})$ satisfies 
\begin{equation*}
dw = \left(aw_{xx} + b w_{x} + cw \right) dt, \quad 0<t\leq\tau_{m,n}\,; \quad w(0,\cdot)=0. 
\end{equation*}
By Theorem \ref{theorem_nonlinear_case}, we have $w = 0$ and thus $u = v$ in $\cL_q(\tau_{m,n})$. Therefore, $u$ is in $\cH_q^{1/2-\kappa}(\tau_{m,n})$. Note that $\bar{u}\in \cH_q^{1/2-\kappa}(\tau_{m,n})$ satisfies equation \eqref{burger's_eq_space_time_white_noise}. By Theorem \ref{cut_off_lemma_large_lambda_STWN}, we have $u = \bar{u}$ in $\cH_q^{1/2-\kappa}(\tau_{m,n})$.  The theorem is proved.
\qed


\section{Proof of the second case: modified Burgers' equation with the super-linear diffusion coefficient \texorpdfstring{$\sigma(u)$}{Lg} } 
\label{Proof of the second case}

This section proposes proof of Theorems \ref{theorem_burgers_eq_STWN_super_linear} and \ref{uniqueness_in_p_2}. The main idea of the proof for Theorem \ref{theorem_burgers_eq_STWN_super_linear} is similar to the proof of Theorem \ref{theorem_burgers_eq_large_lambda_STWN}. However, we deal with the nonlinear terms $ |u|^\lambda u_x$ and $|u|^{1+\lambda_0}$ simultaneously. Intuitively, we understand equation \eqref{burger's_eq_space_time_white_noise_super_linear} as
\begin{equation}
\label{example_eq_5}
du = \left(a^{ij}u_{x^ix^j} + b^i u_{x^i} + cu + \bar{b}^i\left( \xi u \right)_{x^i}\right) dt + \mu\, \xi_0 \,u\,\eta_k dw_t^k,\quad t > 0\,; \quad u(0) = u_0,
\end{equation}
where $\xi = |u|^{1+\lambda}/u$ and $\xi_0 = |u|^{1+\lambda_0}/u$ $(0/0:=0)$. 
Then, Theorem \ref{theorem_nonlinear_case} and \eqref{main_computation_Non_explosion_small_lambda} yield
\begin{align*}
\|u\|_{\cH_p^{1/2-\kappa}(\tau)}&\leq N\left(\|u_0\|_{U_p^{1/2-\kappa}} + \| \bar b \xi u \|_{H_p^{-3/2-\kappa}} + \| \mu \xi_0 u \|_{H_p^{-1/2-\kappa}(\ell_2)}\right) \\
&\leq N\|u_0\|_{U_p^{1/2-\kappa}} + N \left(\| \xi \|_{L_s}^{1/s} + \| \xi_0 \|_{L_{2s_0}}^{1/{2s_0}}\right)\| u \|_{L_p}
\end{align*}
where $s>1$ and $s_0>1$. Note that the coefficients $\xi\in L_s$ and $\xi_0\in L_{s_0}$ should be controlled to extend the local solution to a global one. Since we have the uniform $L_1$ bound of $u_m$ (Lemma \ref{L_1_bound_small_lambda_STWN_lemma}), the case $s = 1/\lambda$ and $s_0 = 1/(2\lambda_0)$ is considered. Therefore, the conditions on $\lambda\in(0,1)$ and $\lambda_0\in(0,1/2)$ are required. With the assumptions on $\lambda$ and $\lambda_0$, the non-explosive property of local solutions is proved; see Lemma \ref{Non_explosion_small_lambda_STWN}.

\vspace{1mm}

Recall that $h(\cdot)\in C_c^\infty(\bR)$ satisfies $h\geq0$, $h(z) = 1$ on $|z|\leq 1$, and $h(z) = 0$ on $|z|\geq2$. Besides, 
$$ h_m(z) := h(z/m).
$$

The following lemma provides the unique existence of the local solution.

\begin{lemma} \label{cut_off_lemma_small_lambda_STWN}
Let $\lambda,\lambda_0\in(0,\infty)$, $T\in(0,\infty)$, $\kappa \in (0,1/2)$, and $p > \frac{6}{1-2\kappa}$. Suppose Assumptions \ref{deterministic_part_assumption_on_coeffi_space-time_white_noise} and \ref{stochastic_part_assumption_on_coeffi_space_time_white_noise_small_lambda} hold. Then, for a bounded stopping time $\tau\leq T$, $m\in\bN$, and nonnegative initial data $u_0\in U_{p}^{1/2-\kappa}$, there exists a unique $u_m \in \cH_p^{1/2-\kappa}(\tau)$ such that $u_m$ satisfies equation 
\begin{equation} 
\label{cut_off_equation_small_lambda_STWN}
du = \left(au_{xx} + b u_{x} + cu +  \bar{b}\left(u_+^{1+\lambda }h_m(u) \right)_{x}  \right) dt + \mu u_+^{1+\lambda_0}h_m(u)\eta_k dw_t^k,
\end{equation}
on $(t,x)\in(0,\tau)\times\bR$ with the initial data $u(0,\cdot) = u_0(\cdot)$. Furthermore, $u_m\geq0$. 
\end{lemma}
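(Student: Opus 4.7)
The plan is to follow the structure of the proof of Lemma \ref{cut_off_lemma_large_lambda_STWN} essentially line-by-line, since the cut-off $h_m$ renders the noise coefficient $u_{+}^{1+\lambda_0}h_m(u)$ bounded and globally Lipschitz with an $m$-dependent constant, which is the property of $\sigma$ that was actually used there. First I would set $F_m(u) := u_{+}^{1+\lambda}h_m(u)$ and $G_m(u) := u_{+}^{1+\lambda_0}h_m(u)$ and verify by the same four-case check as in \eqref{lipschitz_check_cut_off} that
$$|F_m(u)-F_m(v)| \le N_m|u-v|, \qquad |G_m(u)-G_m(v)| \le N_m|u-v|$$
for all $u,v\in\bR$.

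Next, I would define $f(u) := \bar b(t)\,F_m(u)_x$ and $g^k(u) := \mu(t,x)\,G_m(u)\,\eta_k(x)$, and verify Assumption \ref{assumption_on_f_and_g}($\tau$) at regularity index $\gamma=-3/2-\kappa$. Using Lemma \ref{prop_of_bessel_space} \eqref{bounded_operator}, the kernel bound of Remark \ref{Kernel}, and Minkowski's inequality precisely as in the derivation of \eqref{l_2_computation}, one obtains
$$\|f(u)-f(v)\|_{\bH_p^{-3/2-\kappa}(\tau)}^p + \|g(u)-g(v)\|_{\bH_p^{-1/2-\kappa}(\tau,\ell_2)}^p \le N_m\,\|u-v\|_{\bL_p(\tau)}^p,$$
where the boundedness of $\mu$ provided by Assumption \ref{stochastic_part_assumption_on_coeffi_space_time_white_noise_small_lambda} together with the above Lipschitz bound on $G_m$ absorbs the $\ell_2$-sum as in \eqref{l_2_computation}. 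An application of the multiplicative inequality Lemma \ref{prop_of_bessel_space} \eqref{multi_ineq} then yields, for any $\ep>0$,
$$\|u-v\|_{\bL_p(\tau)}^p \le \ep\|u-v\|_{\bH_p^{1/2-\kappa}(\tau)}^p + N_\ep\|u-v\|_{\bH_p^{-3/2-\kappa}(\tau)}^p,$$
which verifies Assumption \ref{assumption_on_f_and_g}($\tau$). The existence and uniqueness of $u_m\in\cH_p^{1/2-\kappa}(\tau)$ then follow from Theorem \ref{theorem_nonlinear_case}.

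For the nonnegativity assertion, I would mirror the second half of the proof of Lemma \ref{cut_off_lemma_large_lambda_STWN}: pick approximating initial data $u_0^n\in U_p^1$ with $u_0^n\ge 0$ and $u_0^n\to u_0$ in $U_p^{1/2-\kappa}$, solve the equation with only the first $n$ noise modes to obtain $u_m^n\in\cH_p^1(\tau)$, and invoke the pointwise maximum principle of \cite[Theorem 2.5]{krylov2007maximum} (with $f=\bar b\,F_m(u_m^n)_x$) to conclude $u_m^n\ge 0$. The stability estimate from Theorem \ref{embedding} \eqref{gronwall_type_ineq} applied to $u_m-u_m^n$ reduces the problem to proving
$$\bigl\|\mu\,G_m(u_m)\,\eta\,\mathbf{1}_{k>n}\bigr\|_{\bH_p^{-1/2-\kappa}(\tau,\ell_2)}^p \xrightarrow[n\to\infty]{} 0,$$
which I would obtain by two successive applications of the dominated convergence theorem exactly as in the first case, dominating the tail pointwise by $\int |R_{1/2+\kappa}(y)|^2\,|G_m(u_m)(t,x-y)|^2\,dy$ and using the uniform bound $|G_m(u)|\le N_m$.

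The main obstacle I anticipate is purely cosmetic: one must confirm that the $\ell_2$-tail convergence argument of Lemma \ref{cut_off_lemma_large_lambda_STWN} carries over when the noise coefficient is the super-linear cut-off $\mu\,u_{+}^{1+\lambda_0}h_m(u)$ rather than the bounded Lipschitz $\sigma$ of the first case. Because $G_m$ is uniformly bounded by a constant depending on $m$ alone, this step is in fact easier here than in Lemma \ref{cut_off_lemma_large_lambda_STWN} and goes through without modification. No algebraic condition on $\lambda$ or $\lambda_0$ is needed at this stage --- those will only enter when one removes the cut-off in the non-explosion argument (Lemma \ref{Non_explosion_small_lambda_STWN}).
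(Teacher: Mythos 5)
Your proposal matches the paper's proof, which simply says to follow the proof of Lemma \ref{cut_off_lemma_large_lambda_STWN} verbatim, replacing the estimate \eqref{l_2_computation} by the analogous bound for $\mu\,u_+^{1+\lambda_0}h_m(u)\eta$ using the $m$-dependent Lipschitz constant of the cut-off coefficient and the boundedness of $\mu$ from Assumption \ref{stochastic_part_assumption_on_coeffi_space_time_white_noise_small_lambda}. Your observation that no condition on $\lambda$ or $\lambda_0$ is needed at this stage is also consistent with the paper, which allows $\lambda,\lambda_0\in(0,\infty)$ here.
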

\begin{proof}
Follow the proof of Lemma \ref{cut_off_lemma_large_lambda_STWN}. However, instead of \eqref{l_2_computation}, we apply
\begin{equation*}
\begin{aligned}
&\left\| \mu u_+^{1+\lambda_0}h_m(u)\eta - \mu v_+^{1+\lambda_0}h_m(v)\eta \right\|_{H_p^{-1/2-\kappa}(\ell_2)}^p \\
& \leq \int_\bR  \left( \sum_k \left( \int_\bR \left| R_{1/2+\kappa}(x-y) \right|\mu(s,y) \bar{\Phi}_{m,\lambda_0}(u,v,s,y)\eta_k(y) dy \right)^2 \right)^{p/2} dx \\
& \leq N_m\int_\bR  \left( \int_\bR \left|R_{1/2+\kappa}(y)\right|^2(u(s,x-y) - v(s,x-y))^2 dy \right)^{p/2} dx \\
& \leq N_m\left( \int_\bR \left| R_{1/2+\kappa}(y) \right|^2 dy \right)^{p/2}\int_\bR  |u(s,x) - v(s,x)|^p  dx,
\end{aligned}
\end{equation*}
where $\bar{\Phi}_{m,\lambda_0}(u,v,s,y) = \left(u_+^{1+\lambda_0}(s,y)h_m(u(s,y)) - v_+^{1+\lambda_0}(s,y)h_m(v(s,y))\right)$.
The lemma is proved.

\qed
\end{proof}

\begin{remark}
Suppose \eqref{cut_off_equation_small_lambda_STWN} holds for all $t$, and $u_m$ is the solution Lemma \ref{cut_off_lemma_small_lambda_STWN}. Then, for any $\phi\in \cS$,
\begin{equation*}
M_t:=\sum_{k=1}^\infty \int_0^t \int_{\bR} \mu u_+^{1+\lambda_0} h_m(u) \eta_k \phi dx dw_s^k,\quad t<\infty
\end{equation*}
is a square integrable martingale. Indeed,
\begin{equation*}
\begin{aligned}
&\sum_{k=1}^\infty \int_0^t \left( \int_{\bR} \mu(s,x) u_+^{1+\lambda_0}(s,x) h_m(u(s,x)) \eta_k(x) \phi(x) dx \right)^2 ds \\
&\quad\leq N \sum_{k=1}^\infty \int_0^t \left( \int_{\bR}  \eta_k(x) \phi(x) dx \right)^2 ds \\
&\quad\leq N\| \phi \|_{L_2}^2 \\
&\quad< \infty
\end{aligned}
\end{equation*}
almost surely.
\end{remark}

\begin{lemma}
\label{L_1_bound_small_lambda_STWN_lemma}
Suppose all the conditions of Lemma \ref{cut_off_lemma_small_lambda_STWN} hold for $\tau = T$, and  we assume that $u_0\in  U_{p}^{1/2-\kappa}\cap L_1(\Omega;L_1)$. Let $u_m$ be the solution to equation \eqref{cut_off_equation_small_lambda_STWN} introduced in Lemma \ref{cut_off_lemma_small_lambda_STWN}. Then
\begin{equation} 
\label{L_1_bound_small_lambda_STWN}
\bE\sup_{t\leq T}\|u_m(t,\cdot)\|_{L_1}^{1/2} \leq N\|u_0\|_{L_1(\Omega\times\bR)}^{1/2}.
\end{equation} 
\end{lemma}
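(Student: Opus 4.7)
The proof follows the scheme used for \eqref{noise_cancelling_estimate_2} in Lemma \ref{noise_cancelling_lemma}. I would begin by applying It\^o's formula to the nonnegative scalar process $t \mapsto (u_m(t,\cdot),h_k)$, where $h_k(x)=h(x/k)$ is the spatial cutoff from \eqref{cut-off function} (taking $0 \leq h \leq 1$ and nonincreasing in $|x|$, so that $h_k\uparrow 1$ pointwise). After two integrations by parts on $au_{mxx}$, one on $bu_{mx}$, and---crucially---one on the nonlinear drift $\bar b\bigl(u_+^{1+\lambda}h_m(u)\bigr)_{x}$ (which is legal because $\bar b$ does not depend on $x$), the pathwise drift of $(u_m,h_k)$ can be written
\begin{equation*}
\mathrm{drift}_k(s) \leq 3K\,(u_m(s,\cdot),h_k) + r_k^m(s),
\end{equation*}
where $r_k^m(s)$ collects every integral containing $h_k'$ or $h_k''$. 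Exactly as in the proof of Lemma \ref{L1_bound}, H\"older's inequality with conjugate exponent $q=p/(p-1)$, the scaling $\|h_k^{(j)}\|_{L_q}=k^{1/q-j}\|h^{(j)}\|_{L_q}$, the cutoff bound $u_m^{1+\lambda}h_m(u_m)\leq (2m)^\lambda u_m$, and $u_m \in \bL_p(T)$ yield
\begin{equation*}
\bE\int_0^T |r_k^m(s)|\,ds \leq N_m\,k^{1/q-1}\,\|u_m\|_{\bL_p(T)},
\end{equation*}
which tends to zero as $k\to\infty$ for each fixed $m$.

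Setting $M_0:=3K$, I would then introduce the nonnegative continuous process
\begin{equation*}
Z_t^k := e^{-M_0 t}\,(u_m(t,\cdot),h_k) + \int_t^T e^{-M_0 s}\,|r_k^m(s)|\,ds,\qquad t\in[0,T].
\end{equation*}
A direct computation using the drift bound above gives
\begin{equation*}
dZ_t^k \leq e^{-M_0 t}\bigl(r_k^m(t)-|r_k^m(t)|\bigr)\,dt + e^{-M_0 t}\,dN_t^k \leq e^{-M_0 t}\,dN_t^k,
\end{equation*}
where $N_t^k := \sum_{k'}\int_0^t\bigl(\mu u_m^{1+\lambda_0}h_m(u_m)\eta_{k'},h_k\bigr)\,dw_s^{k'}$ is a square-integrable martingale (each integrand is bounded, and Bessel's identity collapses the $\ell_2$-sum in the predictable quadratic variation to $\|\mu u_m^{1+\lambda_0}h_m(u_m)h_k\|_{L_2}^2$). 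Thus $Z^k$ is a nonnegative continuous local supermartingale, and since $\bE Z_0^k \leq \bE\|u_0\|_{L_1} + \bE\int_0^T|r_k^m|\,ds < \infty$, it is in fact a genuine supermartingale.

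I would then apply the Doob-type inequality $\bE\sup_{t\leq T}X_t^{1/2}\leq 2\bigl(\bE X_0\bigr)^{1/2}$, valid for every nonnegative c\`adl\`ag supermartingale $X$ (cf.\ \cite[Theorem III.6.8]{diffusion}), to $Z^k$. Combined with the pointwise bound $e^{-M_0 T}(u_m(t,\cdot),h_k)\leq Z_t^k$, this yields
\begin{equation*}
\bE\sup_{t\leq T}(u_m(t,\cdot),h_k)^{1/2} \leq 2e^{M_0 T/2}\bigl(\bE\|u_0\|_{L_1}+\epsilon_{k,m}\bigr)^{1/2},
\end{equation*}
where $\epsilon_{k,m}:=\bE\int_0^T|r_k^m|\,ds \to 0$ as $k\to\infty$ for each fixed $m$. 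Because $h_k\uparrow 1$ monotonically, both $(u_m(t,\cdot),h_k)$ and $\sup_{t\leq T}(u_m(t,\cdot),h_k)$ increase almost surely to $\|u_m(t,\cdot)\|_{L_1}$ and $\sup_{t\leq T}\|u_m(t,\cdot)\|_{L_1}$, respectively; monotone convergence then delivers \eqref{L_1_bound_small_lambda_STWN} with the constant $N=2e^{3KT/2}=N(K,T)$ independent of $m$. The main obstacle is precisely this $m$-uniformity: a direct Burkholder--Davis--Gundy estimate applied to $N_t^k$ would bring in the cutoff-dependent factor from $u_m^{2+2\lambda_0}h_m(u_m)^2\leq(2m)^{1+2\lambda_0}u_m$, whereas absorbing the noise into the supermartingale $Z^k$ and invoking the $1/2$-power Doob inequality bypasses the quadratic variation entirely.
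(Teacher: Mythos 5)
Your proof is correct and follows essentially the same route as the paper: Itô's formula applied to $(u_m(t,\cdot),h_k)$ as in Lemma \ref{L1_bound}, with the time-integration step replaced by the $1/2$-power maximal inequality for nonnegative supermartingales from \cite[Theorem III.6.8]{diffusion}, followed by $k\to\infty$ via monotone convergence. Your explicit construction of the supermartingale $Z_t^k$ (absorbing the $h_k'$, $h_k''$ remainder terms) is a clean way to fill in a detail the paper leaves implicit, and the $m$-uniformity of the final constant is handled exactly as intended.
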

\begin{proof}
Follow the proof of Lemma \ref{L1_bound}. When we obtain \eqref{two kinds of L1bound}, however, employ \cite[Theorem III.6.8]{diffusion} instead of integration in $t$. Then, we have
\begin{equation*}
\bE\sup_{t\leq T}\left(\int_{\bR}u_m(t,x) h_k(x)dx\right)^{1/2} \leq 3e^{2 KT}\bE\|u_0\|_{L_1}^{1/2} + k^{-1/2+1/(2q)}N e^{2KT},
\end{equation*}
where $N = N(m,p,K,T)$. By letting $k\to\infty$, the monotone convergence theorem implies that 
\begin{equation} \label{L_1 est.}
\bE\sup_{t\leq T}\|u_m(t,\cdot)\|_{L_1}^{1/2} \leq 3e^{2KT}\bE\|u_0\|_{L_1}^{1/2}.
\end{equation}
The lemma is proved.
\qed
\end{proof}

\begin{remark}
To construct a global solution from the local ones, we need to show the local solution $u_m$ is non-explosive. Thus, we prove $\tau_m:=\tau_m^m\wedge m\to\infty$ (a.s.) as $m\to\infty$, where $\tau_m^R$ is a stopping time introduced in Remark \ref{def_u}. Then, the uniqueness of $u_m$ (Lemma \ref{cut_off_lemma_small_lambda_STWN}) implies that a global solution
$$ u(t,x):= u_{m}(t,x)\quad\text{for}\quad t\leq \tau_m
$$
is well-defined.

Note that we employ Lemma \ref{L_1_bound_small_lambda_STWN_lemma} to show 
\begin{equation}
\label{non_explosive_limit}
\tau_m^m\to\infty\quad \text{ as } \quad m\to\infty\quad(a.s.).
\end{equation}
Since $L_1$ norm of $u_m$ is uniformly bounded,  a stopping time $\tau_m(S):=\inf\{ t\geq0:\| u_m(t,\cdot) \|_{L_1}\geq S \}$ is well-defined and $\tau_m(S)\to\infty$ in probability as $S\to\infty$. Then, we obtain \eqref{non_explosive_limit} by considering the local solution $u_m$ on $[0,\tau_m(S))$; see Lemma \ref{Non_explosion_small_lambda_STWN}
\end{remark}

\begin{lemma} \label{Non_explosion_small_lambda_STWN}
Suppose all the conditions of Theorem \ref{theorem_burgers_eq_STWN_super_linear} hold, and $u_m$ is the solution to equation \eqref{cut_off_equation_small_lambda_STWN} introduced in Lemma \ref{cut_off_lemma_small_lambda_STWN}. Then, we have
\begin{equation} 
\label{stopping_time_blow_up_STWN_2}
\lim_{R\to\infty}\sup_mP\left( \left\{\omega\in\Omega : \sup_{t\leq T,x\in\bR} |u_m(t,x)|\geq R \right\} \right) = 0.
\end{equation}
\end{lemma}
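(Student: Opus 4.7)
The plan is to imitate the structure of Lemma \ref{Non_explosion_large_lambda}, but to replace the noise-cancellation decomposition (which exploited boundedness of $\sigma$) by a stopping-time truncation driven by the uniform $L_1$ bound from Lemma \ref{L_1_bound_small_lambda_STWN_lemma}. Concretely, for each $m, S > 0$ I would set
\begin{equation*}
\rho_m(S) := \inf\{t \geq 0 : \|u_m(t,\cdot)\|_{L_1} \geq S\} \wedge T,
\end{equation*}
which is a stopping time since the It\^o argument of Lemma \ref{L_1_bound_small_lambda_STWN_lemma} renders $t \mapsto (u_m(t,\cdot), h_k)$ a.s. continuous for every compactly supported $h_k$, and $\|u_m(t,\cdot)\|_{L_1}$ is the monotone limit as $h_k \uparrow 1$. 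By Lemma \ref{L_1_bound_small_lambda_STWN_lemma} and Chebyshev, $\sup_m P(\rho_m(S) < T) \leq N\bE\|u_0\|_{L_1}^{1/2}/\sqrt{S}$.

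The main step is to derive an $m$-uniform bound $\|u_m\|_{\cH_p^{1/2-\kappa}(\rho_m(S))}^p \leq N(u_0, S, p, \kappa, \lambda, \lambda_0, K, T)$. Applying Theorem \ref{theorem_nonlinear_case} to \eqref{cut_off_equation_small_lambda_STWN} reduces this to estimating the drift and noise terms. For the drift, I would use $\|(\cdot)_x\|_{H_p^{-3/2-\kappa}} \leq N\|\cdot\|_{H_p^{-1/2-\kappa}}$ (Lemma \ref{prop_of_bessel_space} \eqref{bounded_operator}), the kernel representation from Remark \ref{Kernel}, Young's convolution inequality, and interpolation between $L_1$ and $L_p$ to prove
\begin{equation*}
\|u_m^{1+\lambda}\|_{H_p^{-1/2-\kappa}}^p \leq N\|R_{1/2+\kappa}\|_{L_{1/(1-\lambda)}}^p \|u_m\|_{L_1}^{\lambda p}\|u_m\|_{L_p}^p,
\end{equation*}
where the kernel norm is finite precisely when $(1/2-\kappa) < 1 - \lambda$, i.e., $\kappa > \lambda - 1/2$. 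An analogous argument on the noise term, using $\ell_2$-orthonormality of $\{\eta_k\}$ to reduce the $H_p^{-1/2-\kappa}(\ell_2)$-norm to an $L_{p/2}$-norm of $R_{1/2+\kappa}^2 * u_m^{2(1+\lambda_0)}$, then Young followed by the same $L_1$--$L_p$ interpolation, yields
\begin{equation*}
\|\mu u_m^{1+\lambda_0}\eta\|_{H_p^{-1/2-\kappa}(\ell_2)}^p \leq N\|R_{1/2+\kappa}\|_{L_{2/(1-2\lambda_0)}}^p \|u_m\|_{L_1}^{\lambda_0 p}\|u_m\|_{L_p}^p,
\end{equation*}
with the kernel norm finite precisely when $\kappa > \lambda_0$. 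Since $\|u_m\|_{L_1} \leq S$ on $[0, \rho_m(S)]$, the Gr\"onwall-type Theorem \ref{embedding} \eqref{gronwall_type_ineq} (with $\gamma + 2 = 1/2 - \kappa$, $\gamma_0 + 2 = 0$, and $N_1 = N(1 + S^{\lambda p} + S^{\lambda_0 p})$) produces the claimed $m$-uniform bound.

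Corollary \ref{embedding_corollary} combined with Chebyshev then gives $P(\sup_{t \leq \rho_m(S), x \in \bR}|u_m(t,x)| \geq R) \leq N(u_0, S, \ldots)/R^p$ uniformly in $m$. Writing
\begin{equation*}
P\Bigl(\sup_{t\leq T, x \in \bR}|u_m(t,x)| \geq R\Bigr) \leq P\Bigl(\sup_{t \leq \rho_m(S), x \in \bR}|u_m(t,x)| \geq R\Bigr) + P(\rho_m(S) < T),
\end{equation*}
and sending $R \to \infty$ first, then $S \to \infty$, proves \eqref{stopping_time_blow_up_STWN_2}.

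The delicate step is the interpolation-Young bookkeeping that makes the kernel-integrability thresholds land exactly on the hypotheses $\kappa > \lambda - 1/2$ and $\kappa > \lambda_0$: specifically, choosing the interpolation exponent so that $\|u^{1+\lambda}\|_{L_{p/(\lambda p + 1)}}$ decomposes as $\|u\|_{L_1}^\lambda \|u\|_{L_p}$ forces the Young conjugate exponent to be exactly $1/(1-\lambda)$, which sits in the $R_{1/2+\kappa}$-integrability range if and only if $\kappa > \lambda - 1/2$ (and analogously the pair $(\lambda_0, 2/(1-2\lambda_0))$ for the noise). Handling both nonlinearities simultaneously, as outlined in the section's preamble, is precisely where the restriction $\lambda \in (0,1)$, $\lambda_0 \in [0, 1/2)$ enters.
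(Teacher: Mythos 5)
Your proposal is correct and follows essentially the same route as the paper: the same $L_1$-truncation stopping time built on Lemma \ref{L_1_bound_small_lambda_STWN_lemma}, the same reduction via Theorem \ref{theorem_nonlinear_case} to kernel estimates with exponents $1/(1-\lambda)$ and $2/(1-2\lambda_0)$ (the paper reaches the identical bounds by H\"older plus Minkowski rather than your Young-plus-interpolation bookkeeping, but the computation is equivalent), the same Gr\"onwall step, and the same two-parameter limit $R\to\infty$ then $S\to\infty$.
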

\begin{proof}
Let $T\in(0,\infty)$. For $m,S\in\bN$, set
\begin{equation*}
\tau_m(S):=\inf\{t\geq0: \| u_m(t,\cdot) \|_{L_1}\geq S\}\wedge T.
\end{equation*}
With the help of Lemma \ref{L_1_bound_small_lambda_STWN_lemma}, the random time $\tau_m(S)$ is a well-defined stopping time. 

Let $t\in(0,T)$. Then, Theorem \ref{theorem_nonlinear_case}, Remark \ref{Kernel}, H\"older inequality, and Mink\"owski's inequality imply
\begin{equation}
\label{non_explosive_computation_1}
\begin{aligned}
&\| u_m \|^p_{\cH_{p}^{1/2-\kappa}(\tau_m(S)\wedge t)} - N\| u_0 \|^p_{U_p^{1/2-\kappa}}\\
&\quad\leq N\left\| \bar b \left( u_m^{1+\lambda } \right)_{x} \right\|_{\bH_{p}^{-3/2-\kappa}(\tau_m(S)\wedge t)}^p + N\left\| \mu u_m^{1+\lambda_0}\eta \right\|_{\bH_{p}^{-1/2-\kappa}(\tau_m(S)\wedge t,\ell_2)}^p \\
&\quad\leq N\left\| u_m^{1+\lambda } \right\|_{\bH_{p}^{-1/2-\kappa}(\tau_m(S)\wedge t)}^p + N\left\| \mu u_m^{1+\lambda_0}\eta \right\|_{\bH_{p}^{-1/2-\kappa}(\tau_m(S)\wedge t,\ell_2)}^p \\
&\quad\leq  N\bE\int_0^{\tau_m(S)\wedge t}\int_{\bR} \left( \int_{\bR} R_{1/2+\kappa}(x-y)|u_m(s,y)|^{1+\lambda } dy \right)^p dxds \\
&\quad\quad + N\bE\int_0^{\tau_m(S)\wedge t}\int_{\bR}\left(  \int_{\bR} \left|R_{1/2+\kappa}(x-y)\right|^2|u_m(s,y)|^{2+2\lambda_0} dy  \right)^{p/2} dxds.
\end{aligned}
\end{equation}
Observe that 
\begin{equation}
\label{non_explosive_computation_2}
\begin{aligned}
&\int_{\bR} \left( \int_{\bR} R_{1/2+\kappa}(x-y)|u_m(s,y)|^{1+\lambda } dy \right)^p dx \\
&\quad \leq \|u_m(s,\cdot)\|_{L_1}^{p\lambda }\int_{\bR} \left( \int_{\bR} |R_{1/2+\kappa}(x-y)|^{\frac{1}{1-\lambda }}|u_m(s,y)|^{\frac{1}{1-\lambda }} dy \right)^{p(1-\lambda )} dx \\
&\quad \leq S^{p\lambda }\| R_{1/2+\kappa} \|_{L_{\frac{1}{1-\lambda }}(\bR)}^p\int_{\bR}|u_m(s,x)|^p dx
\end{aligned}
\end{equation}
and
\begin{equation}
\label{non_explosive_computation_3}
\begin{aligned}
&\int_{\bR}\left(  \int_{\bR} \left|R_{1/2+\kappa}(x-y)\right|^2|u_m(s,y)|^{2+2\lambda_0} dy  \right)^{p/2} dx \\
&\quad \leq \|u_m(s,\cdot)\|_{L_1}^{p\lambda_0}\int_{\bR} \left(  \int_{\bR} |R_{1/2+\kappa}(x-y)|^{\frac{2}{1-2\lambda_0}}|u_m(s,y)|^{\frac{2}{1-2\lambda_0}} dy  \right)^{p(1/2-\lambda_0)} dx\\
&\quad \leq S^{p\lambda_0}\| R_{1/2+\kappa} \|_{L_{\frac{2}{1-2\lambda_0}}(\bR)}^p \int_{\bR}|u_m(s,x)|^p dx
\end{aligned}
\end{equation}
for all $s\in(0,\tau_m(S)\wedge t)$ almost surely. Therefore, by applying \eqref{non_explosive_computation_2} and \eqref{non_explosive_computation_3} to \eqref{non_explosive_computation_1}, we have
\begin{equation*}
\begin{aligned}
&\| u_m \|^p_{\cH_{p}^{1/2-\kappa}(\tau_m(S)\wedge t)} - N\| u_0 \|^p_{U_p^{1/2-\kappa}}\\
&\quad\leq N\left( S^{p\lambda }\| R_{1/2+\kappa} \|_{L_{\frac{1}{1-\lambda }}(\bR)}^p + S^{p\lambda_0}\| R_{1/2+\kappa} \|_{L_{\frac{2}{1-2\lambda_0}}(\bR)}^p \right)\|u_m\|^p_{\bL_p(\tau_m(S)\wedge t)},
\end{aligned}
\end{equation*}
where $N = N(p,\kappa,K,T)$. Note that $\| R_{1/2+\kappa} \|_{L_{\frac{1}{1-\lambda }}(\bR)} + \| R_{1/2+\kappa} \|_{L_{\frac{2}{1-2\lambda_0}}(\bR)} < \infty$ since $\kappa > (\lambda  - 1/2)\vee \lambda_0$. Thus,
\begin{equation*}
\| u_m \|_{\cH_p^{1/2-\kappa}(\tau_m(S)\wedge t)}^p \leq N_1\| u_0 \|^p_{U_p^{1-\kappa}} + N_2\| u_m \|_{\bL_p(\tau_m(S)\wedge t)}^p,
\end{equation*}
where $N_1 = N_1(p,\kappa,K,T)$ and  $N_2 = N_2(S,\lambda ,\lambda_0,p,\kappa,K,T)$. By Corollary \ref{embedding_corollary} and Theorem \ref{embedding} \eqref{gronwall_type_ineq}, we have
\begin{equation*}
\bE\sup_{t\leq \tau_m(S)\wedge T,x\in\bR}|u_m(s,x)|^p \leq N\| u_m \|^p_{\cH_{p}^{1-\kappa}(\tau_m(S)\wedge T)} \leq N  \|u_0\|_{U_p^{1-\kappa}}^p,
\end{equation*}
where $N = N(S,\lambda ,\lambda_0,p,\kappa,K,T)$. By the way, by \eqref{L_1_bound_small_lambda_STWN} and Chebyshev's inequality, we have
\begin{equation*}
\begin{aligned}
P\left( \sup_{t\leq T}\| u_m(t,\cdot) \|_{L_1}\geq S \right) \leq \frac{1}{\sqrt S}\bE\sup_{t\leq T}\| u_m(t,\cdot) \|_{L_1}^{1/2}  \leq \frac{N}{\sqrt S},
\end{aligned}
\end{equation*}
where $N = N(K,T)$.
Therefore, by Chebyshev's inequality twice, we have
\begin{equation*}
\begin{aligned}
&P\left(\sup_{t\leq T,x\in\bR}|u_m(t,x)| > R\right) \\
&\quad\leq P\left(\sup_{t\leq \tau_m(S)\wedge T,x\in\bR}|u_m(t,x)| > R\right) + P(\tau_m(S)<T)  \\
\end{aligned}
\end{equation*}
\begin{equation*}
\begin{aligned}
&\quad\leq P\left(\sup_{t\leq \tau_m(S) \wedge T,x\in\bR}|u_m(t,x)| > R\right) + P\left( \sup_{t\leq T}\| u_m(t,\cdot) \|_{L_1}\geq S \right)\\
&\quad\leq \frac{N_1}{R^p} + \frac{N_2}{\sqrt S},
\end{aligned}
\end{equation*}
where $N_1 = N_1(S,\lambda ,\lambda_0,p,\kappa,K,T)$ and $N_2 = N_2(K,T)$. By taking the supremum with respect to $m$,  letting $R\to\infty$, $S\to\infty$ in order, we get \eqref{stopping_time_blow_up_STWN_2}. The lemma is proved.
\qed
\end{proof}

Now we introduce the proof of Theorem \ref{theorem_burgers_eq_STWN_super_linear}. The motivation of the proof follows from \cite[Section 8.4]{kry1999analytic} and Proof of Theorem 2.11 in \cite{han2019boundary}.

\vspace{2mm}

\noindent\textbf{Proof of Theorem \ref{theorem_burgers_eq_STWN_super_linear}}
{\it Step 1. (Uniqueness).}  
Follow the \textit{Step 1} of \textbf{Proof of Theorem \ref{theorem_burgers_eq_large_lambda_STWN}}. The only difference is $\kappa\in(0,(\lambda - 1/2)\vee \lambda_0,1/2)$ instead of $\kappa\in(0,1/2)$.

{\it Step 2. (Existence).} 
Let $\kappa\in(0,(\lambda - 1/2)\vee \lambda_0,1/2)$ and $T\in(0,\infty)$. By Lemma \ref{cut_off_lemma_small_lambda_STWN}, there exists nonnegative $u_m\in\cH_{p}^{1/2-\kappa}(T)$ satisfying equation \eqref{burger's_eq_space_time_white_noise_super_linear}. By Corollary \ref{embedding_corollary}, we have $u_m\in C([0,T];C(\bR))$ (a.s.). Thus, we can define $\tau_m^R$ as in \eqref{stopping_time_taumm}.
Also, as in Remark \ref{def_u}, we have $\tau_R^R \leq \tau_m^m$. 
Therefore, Lemma \ref{Non_explosion_small_lambda_STWN} implies 
\begin{equation*}
\begin{aligned}
\limsup_{m\to\infty}  P\left( \tau_m^m < T \right) &= \limsup_{m\to\infty} P\left(\sup_{t\leq T,x\in\bR}|u_m(t,x)|\geq m\right) \\
&\leq \limsup_{m\to\infty}\sup_{n} P\left(\sup_{t\leq T,x\in\bR}|u_n(t,x)|\geq m\right)\to 0,
\end{aligned}
\end{equation*}
which implies $\tau_m^m\to \infty$ in probability. Since $\tau_m^m$ is increasing,  we conclude that $\tau_m^m\uparrow \infty$ (a.s.). Define $\tau_m := \tau_m^m\wedge m$ and
\begin{equation*}
u(t,x):=u_m(t,x)\quad \text{for}~ t\in[0,\tau_m].
\end{equation*}
Note that $u$ satisfies \eqref{burger's_eq_space_time_white_noise_super_linear} for all $t \leq \tau_m$, because $|u(t)| = |u_m(t)|\leq m$ for $t\leq\tau_m$. Since $u = u_m \in \cH_p^{1-\kappa}(\tau_m)$ for any $m$, we have $u\in\cH_{p,loc}^{1-\kappa}$. 

{\it Step 3. (H\"older regularity).}  Let $T<\infty$. Since $u\in \cH_p^{1/2-\kappa}(T)$, by employing Corollary \ref{embedding_corollary}, we have \eqref{holder_regularity_large_lambda_STWN_2}. The theorem is proved.

\qed

\noindent\textbf{Proof of Theorem \ref{uniqueness_in_p_2}} Follow the proof of \textbf{Proof of Theorem \ref{uniqueness_in_p_1}}. It should be mentioned that we assume $\kappa\in(0,(\lambda - 1/2)\vee \lambda_0,1/2)$.
\qed


\bibliographystyle{plain}

\end{document}